\definecolor{webgreen}{rgb}{0,.5,0}
\definecolor{webbrown}{rgb}{.8,0,0}
\definecolor{emphcolor}{rgb}{0.95,0.95,0.95}
\ifpdf \hypersetup{pdftex,
            pdfstartview=FitH, 
            bookmarksopen=true,
            bookmarksnumbered=true
} \else \hypersetup{dvips} \fi
\renewcommand{\S}{\mathcal{S}}
\newcommand{\ST}{\widetilde{\S}}
\numberwithin{equation}{section}
\newtheorem{theorem}{Theorem}[section]
\newtheorem{proposition}{Proposition}[section]
\newtheorem{corollary}{Corollary}[section]
\newtheorem{remark}{Remark}[section]
\newtheorem{lemma}{Lemma}[section]
\newtheorem{assump}{Assumption}[section]
\numberwithin{remark}{section} \numberwithin{proposition}{section}
\numberwithin{corollary}{section}
\newcommand {\R}{\mathbb{R}}
\newcommand {\F}{\mathcal{F}}
\newcommand {\N}{\mathbb{N}}
\newcommand {\p}{\mathbb{P}}
\newcommand {\E}{\mathbb{E}}
\newcommand {\II}{\mathcal{I}}
\newcommand{\diff}{{\rm d}}
\newcommand{\lev}{L\'{e}vy }
\newcommand{\lapinv}{\Phi_r}
\title[Contraction options in spectrally negative L\'{e}vy models]{Contraction options and optimal multiple-stopping \\ in spectrally negative L\'{e}vy models}
\thanks{This version: \today. The author thanks the anonymous referee for his/her thorough reviews and insightful comments that help improve the presentation of the paper.   K.\ Yamazaki is in part supported by MEXT KAKENHI grant numbers  22710143 and 26800092, JSPS KAKENHI grant number 23310103, the Inamori foundation research grant, and the Kansai University subsidy for supporting young scholars 2014.}
\thanks{$^*$\, Department of Mathematics,
Faculty of Engineering Science, Kansai University, 3-3-35 Yamate-cho, Suita-shi, Osaka 564-8680, Japan. Email: \mbox{{\em
kyamazak@kansai-u.ac.jp}}.  Tel: +81-6-6368-1527.}
\author[K. Yamazaki]{Kazutoshi Yamazaki$^*$}
\date{}
\begin{document}
\begin{abstract}
This paper studies  the optimal multiple-stopping problem arising in the context of the timing option to withdraw from a project in stages.  The profits are driven by a general spectrally negative \lev process.  This allows the model to incorporate sudden declines of the project values, generalizing greatly the classical geometric Brownian motion model.  We solve the one-stage case as well as the extension to the multiple-stage case.  The optimal stopping times are of threshold-type and  the value function admits an expression in terms of the scale function. A series of numerical experiments are conducted to verify the optimality and to evaluate the efficiency of the algorithm.
\end{abstract}

\maketitle \noindent \small{\textbf{Key words:} 
Optimal multiple-stopping; Spectrally negative \lev processes; Real options  \\
\noindent Mathematics Subject Classification (2010): Primary 60G40, Secondary 60J75}\\

\section{Introduction}

Consider a firm facing a decision of when to abandon or contract a project so as to maximize the total expected future cash flows.  This problem is often referred to as the \emph{abandonment option} or the \emph{contraction option}.  A typical formulation reduces to a standard optimal stopping problem, where the uncertainty of the future cash flow is driven by a stochastic process and the objective is to find a stopping time that maximizes the total expected cash flows realized until then.  A more realistic extension is its multiple-stage version where the firm can withdraw from a project in stages.


In a standard formulation, given a discount rate $r > 0$ and $X_t = x+(\mu - \frac 1 2 \sigma^2) t + \sigma B_t$ for a standard Brownian motion $B$, $\mu \in \R$ and $\sigma > 0$ , one wants to obtain a stopping time $\tau$ of $X$ that maximizes the expectation
\begin{align}
\E \left[ \int_0^{\tau} e^{-rt}  (e^{X_t}-\delta)  \diff
t +  e^{-r \tau}
K  1_{\{\tau < \infty \}} \right]. \label{basic_model}
\end{align}
The profit collected continuously is modeled as the geometric Brownian motion $e^{X_t}$ less the constant operating expense $\delta \geq 0$. The value $K \in \R$ corresponds to the lump-sum benefits attained  (or the costs incurred) at the time of abandonment. Here a technical assumption $r > \mu$ is commonly imposed so that the expectation is finite and the problem is non-trivial.    The problem is rather simple mathematically; it reduces to the well-known perpetual American option (or the McKean optimal stopping problem).  An explicit solution can be attained even when $X$ is generalized to a \lev process (see, e.g., Mordecki \cite{mordecki}).

In this paper, we generalize the classical model by extending from Brownian motion to a general \lev process with negative jumps (spectrally negative \lev process),  and consider the optimal stopping problem of the form:
\begin{align}
\sup_{\tau} \E\left[   \int_0^{\tau} e^{-rt} f(X_t) \diff
t + e^{-r \tau}
g (X_{\tau}) 1_{\{\tau < \infty \}}\right]. \label{our_problem}
\end{align}
We obtain the optimal stopping time as well as the value function for the case $f$ is increasing and $g$ admits the form $g(x) = K- bx - \sum_{i=1}^N c_i e^{a_i x}$ for some positive constants $a$, $b$ and $c$.   We also show the optimality among all \emph{stopping times of threshold type} (see \eqref{def_tau_A} below) when $g$ is relaxed to be a general decreasing and concave function.  The decreasing property of $g$ reflects the fact that the cost of abandoning a project is higher when the project is large.

We further extend it to the multiple-stage case where one wants to obtain a set of stopping times $\{ \tau^{(m)}; 1 \leq m \leq M \}$ such that $0 =: \tau^{(0)} \leq \tau^{(1)} \leq \cdots \leq \tau^{(M)}$ a.s. and achieve
\begin{align}
\sup_{\tau^{(1)} \leq \cdots \leq \tau^{(M)}} \sum_{m=1}^M \E \left[   \int_{\tau^{(m-1)}}^{\tau^{(m)}} e^{-rt}  F_m(X_t) \diff t + e^{-r \tau^{(m)}}
g_m(X_{\tau^{(m)}})  1_{\{\tau^{(m)} < \infty \}} \right] \label{our_problem_mult}
\end{align}
when $g_m$ and $f_m := F_m - F_{m+1}$ (with $F_{M+1} := 0$), for each $1 \leq m \leq M$, satisfy the same assumptions as in the one-stage case. The multiple-stopping problem arises frequently  in real options (see e.g.\ \cite{dixit_pindyck}) and is well-studied particularly for the case $X$ is driven by Brownian motion.  In mathematical finance, similar problems are dealt in the valuation of swing options \cite{Carmona_dayanik, Touzi_Carmona} with refraction times between any consecutive stoppings.


Although the use of Brownian motion is fairly common in real options, empirical evidence suggests that the real world is not Gaussian, but with significant skewness and kurtosis (see, e.g., \cite{Boyarchenko_2007, Deaton_1992,Yang_Brorsen}).   Dixit and Pindyck \cite{dixit_pindyck} considered the case with  jumps of a fixed size with Poisson arrivals. Boyarchenko and Levendorski{\u\i} \cite{Boyarchenko_2006} considered the EPV approach for a general \lev process satisfying the (ACP)-condition (with a focus on exponential-type jumps for illustration); they solved a 
related multiple-stage problem with $g$ being constant.  
The \lev model  is in general less tractable than the continuous diffusion counterpart, especially when the lump-sum reward function $g$ is not a constant. 
When jumps are involved, the process can potentially jump over a threshold level, requiring one to compute the overshoot distributions that depend significantly on the form of the \lev measure.  Technical details are further required  when it has jumps of infinite activity or infinite variation.  For related literature, we refer the reader to, among others, \cite{alili-kyp, avram-et-al-2004,   Egami-Yamazaki-2010-1, Kyprianou_Surya_2007, Leung_Yamazaki_2010} for optimal stopping problems and \cite{Baurdoux2008,Baurdoux2009,  Leung_Yamazaki_2011, Hernandez_Yamazaki_2013} for optimal stopping games of spectrally negative \lev processes.  For a general reference on optimal stopping problems, see, e.g., \cite{peskir-shiryaev}.

In this paper, we take advantage of the recent advances in the theory of the spectrally negative \lev process (see, e.g., \cite{Bertoin_1997,Kyprianou_2006}).  In particular, we use the results by Egami and Yamazaki \cite{Egami-Yamazaki-2011}, where we obtained and showed the equivalence of the continuous/smooth fit condition and the first-order condition in a general optimal stopping problem.  
Unlike the two-sided jump case, the identification of the candidate optimal stopping time can be conducted efficiently without intricate computation. The resulting value function can be written in terms of the scale function, which also can be computed efficiently by using, e.g., \cite{Egami_Yamazaki_2010_2,Surya_2008}.   The extension to the multiple-stage case can be carried out without losing generality.  The resulting optimal stopping times are of threshold type with possibly simultaneous stoppings, and the value function again admits the form in terms of the scale function. 
 We also conduct a series of numerical experiments using the spectrally negative \lev process with phase-type  jumps so as to verify the optimality of the proposed strategies as well as the efficiency of the proposed algorithm.  



The rest of the paper is organized as follows.  In Section \ref{section_single_stopping}, we review the spectrally negative \lev process and the scale function and then solve the one-stage problem.  In Section \ref{section_multi_stage}, we extend it to the multiple-stage problem.  In Section \ref{section_numer}, we verify the optimality and efficiency of the algorithm through a series of numerical experiments.  Section \ref{section_conclusion} concludes the paper.

%

\section{One-stage Problem} \label{section_single_stopping}

Let $(\Omega, \F, \p)$ be a probability space hosting a
\emph{spectrally negative \lev process} $X=\{X_t: t\ge 0\}$
characterized uniquely by the \emph{Laplace exponent}
\begin{align}
\psi(\beta) := \log \E^0 \left[ e^{\beta X_1}\right] = c \beta
+\frac{1}{2}\sigma^2 \beta^2 +\int_{( 0,\infty)}(e^{-\beta
z}-1+\beta z 1_{\{0 <z<1\}})\,\Pi(\diff z), \quad  {\beta \geq 0}, \label{laplace_exp}
\end{align}
where $c \in\R$, $\sigma\geq 0$ and $\Pi$ is a \lev measure concentrated on
$(0,\infty)$ such that 
\begin{align}
\int_{(0,\infty)} (1  \wedge z^2) \Pi(
\diff z)<\infty. \label{integrability_levy_measure}
\end{align}
Here and throughout the paper $\p^x$ is the conditional probability where $X_0 = x \in \R$ and $\E^x$ is its expectation.   We exclude the case when
$X$ is a negative of a subordinator (i.e.\ it has monotone paths a.s.) and we shall further assume that the \lev measure is \emph{atomless}:
\begin{assump} \label{assump_no_atom}
We assume that $\Pi$ does not have atoms.
\end{assump}
In addition, we assume the following regarding the tail of the \lev measure.
\begin{assump}\label{assump_tail}
We assume that there exists some $\epsilon > 0$ such that
\begin{align*} 
\int_{[1,\infty)} e^{\epsilon u} \Pi (\diff u) < \infty.
\end{align*}
In particular, this guarantees that $\E^0 X_1 = \psi'(0+) \in (-\infty, \infty)$.
\end{assump}

This section considers the \emph{one-stage optimal stopping} problem of the form \eqref{our_problem}
where the supremum is taken over the set (or a subset) of stopping times with respect to the filtration $\mathbb{F} = (\mathcal{F}_t)_{t \geq 0}$ generated by $X$.  We assume the running payoff function $f$ to be \emph{increasing}.  The stochastic process $X$ models the state of the project and the monotonicity of $f$ means that it yields higher rewards when $X$ is high.  Typically one assumes $f(x) = e^x - \delta$ as in \eqref{basic_model} and this is clearly a special case of our model.  Regarding the terminal reward function $g$, we consider two cases: (i) when $g$ is a sum of linear and exponential functions (Assumption \ref{assump_optimality} below) and  (ii) when $g$ is a general decreasing and concave function (Assumption \ref{assump_optimality2} below).

The results discussed in this section are applications of  Egami and Yamazaki \cite{Egami-Yamazaki-2011} and will be extended to the multiple-stage problem in the next section.  Fix $r > 0$.  Let $\mathcal{S}$ be the set of all $[0,\infty]$-valued $\mathbb{F}$-stopping times and define for any $\tau \in \S$,
\begin{align}
u(x, \tau) \equiv u(x, \tau; f,g)  := \E^x \left[ \int_0^{\tau} e^{-rt} f(X_t) \diff
t + e^{-r \tau}
g (X_{\tau}) 1_{\{\tau < \infty \}}\right], \quad x \in \R. \label{u_notation}
\end{align}
After a brief review on the scale function and the results of \cite{Egami-Yamazaki-2011}, we shall solve, under Assumption \ref{assump_optimality} below, the problem:
\begin{align*}
u(x) := \sup_{\tau \in \S} u(x, \tau).
\end{align*}
We then obtain under Assumption \ref{assump_optimality2} below a weaker version of optimality:
\begin{align*}
\widetilde{u}(x) := \sup_{\tau \in \widetilde{\S}} u(x, \tau),
\end{align*}
over the set of all \emph{first down-crossing times},
\begin{align*}
\widetilde{\mathcal{S}} := \{ \tau_A: A \in \R \},
\end{align*}
where
\begin{align}
\tau_A := \inf \left\{ t > 0: X_t \leq A \right\}, \quad A \in \R, \label{def_tau_A}
\end{align}
with $\inf \emptyset = \infty$ by convention.
This form of optimality is often used in real options and also in the field of corporate finance and credit risk as exemplified by Leland's endogenous default model \cite{Leland_1994, Leland_Toft_1996}.  In practice, a strategy must be simple enough to implement and it is in many cases a reasonable assumption to focus on the set of stopping times of threshold type as in \eqref{def_tau_A}. 
Because $\widetilde{\S} \subset \S$, it is clear that $u \geq \widetilde{u}$.
For the rest of the paper, let $h_\pm(x) := \pm h(x) \vee 0$, $x \in \R$, for any measurable function $h: \R \rightarrow \R$.

\subsection{Review of scale functions and Egami and Yamazaki \cite{Egami-Yamazaki-2011}} \label{subsection_review}
For any spectrally negative \lev process, there exists a function called  the  \emph{(r-)scale function} 
\begin{align*}
W^{(r)}: \R \rightarrow [0,\infty) , 
\end{align*}
which is zero on $(-\infty,0)$, continuous and strictly increasing on $[0,\infty)$, and is characterized by the Laplace transform:
\begin{align*}
\int_0^\infty e^{-s x} W^{(r)}(x) \diff x = \frac 1
{\psi(s)-r}, \qquad s > \lapinv,
\end{align*}
where
\begin{equation}
\lapinv :=\sup\{\lambda \geq 0: \psi(\lambda)=r\}, \quad
r\ge 0. \notag
\end{equation}
Here, the Laplace exponent $\psi$ in \eqref{laplace_exp} is known to be zero at the origin and convex on $[0,\infty)$; $\lapinv$ is well-defined and is strictly positive whenever $r > 0$.  We also define the second scale function:
\begin{align*}
Z^{(r)}(x) := 1 + r \int_0^x W^{(r)}(y) \diff y, \quad x \in \R.
\end{align*}
As we shall see below, the pair of scale functions $W^{(r)}$ and $Z^{(r)}$ play significant roles in our problems; for a comprehensive account of the scale function, we refer the reader to, e.g., \cite{Bertoin_1997,Kuznetsov2013, Kyprianou_2006}.

Recall \eqref{def_tau_A} and define the first up-crossing times of $X$ by $\tau_b^+ := \inf \left\{ t \geq 0: X_t \geq b \right\}$.
Then, for any $b > 0$ and $0 < x \leq b$, as summarized in Theorem 8.1 of \cite{Kyprianou_2006},
\begin{align}
\begin{split}
\E^x \left[ e^{-r \tau_b^+} 1_{\left\{ \tau_b^+ < \tau_0 \right\}}\right] &= \frac {W^{(r)}(x)}  {W^{(r)}(b)}, \\
 \E^x \left[ e^{-r \tau_0} 1_{\left\{ \tau_b^+ > \tau_0 \right\}}\right] &= Z^{(r)}(x) -  Z^{(r)}(b) \frac {W^{(r)}(x)}  {W^{(r)}(b)}, \\
 \E^x \left[ e^{-r \tau_0} \right] &= Z^{(r)}(x) -  \frac r {\Phi_r} W^{(r)}(x).
\end{split}
 \label{laplace_in_terms_of_z}
\end{align}

As in Lemmas 8.3 and 8.5 of  Kyprianou \cite{Kyprianou_2006},  for each $x > 0$, the functions $r \mapsto W^{(r)}(x)$ and $r \mapsto Z^{(r)}(x)$ can be analytically extended to $r \in \mathbb{C}$.  
Fix $a \geq 0$ and define $\psi_a(\cdot)$, as the Laplace exponent of $X$ under $\p_a$ with the change of measure $ \left. \frac {\diff \p_a^0} {\diff \p^0}\right|_{\mathcal{F}_t} = \exp(a X_t - \psi(a) t)$, $t \geq 0$; as in page 213 of \cite{Kyprianou_2006}, for all $\beta > -a$,
\begin{align}
\psi_a(\beta) :=\Big(  a \sigma^2  + c - \int_{(0,1)} u (e^{-au}-1) \Pi(\diff u)\Big) \beta
+\frac{1}{2}\sigma^2 \beta^2 +\int_{(0,\infty)} (e^{- \beta u}-1 + \beta u 1_{\{ 0 < u < 1 \}}) e^{-a u}\,\Pi(\diff u).\label{psi_a}
\end{align}

 If $W_a$ and $Z_a$ are the scale functions associated with $X$ under $\p_a$ (or equivalently with $\psi_a(\cdot)$).  
Then, by Lemma 8.4 of \cite{Kyprianou_2006},
\begin{align}
W_a^{(r-\psi(a))}(x) = e^{-a x} W^{(r)}(x), \quad x \geq 0. \label{scale_measure_change}
\end{align}
In particular, by setting $a = \Phi_r$ (or equivalently $r = \psi(a)$), we can define
\begin{align}
W_{\Phi_r} (x) := W_{\Phi_r}^{(0)} (x)  = e^{-\Phi_r x} W^{(r)} (x), \quad x \in \R \label{W_scaled}
\end{align}
which satisfies
\begin{align*}
\int_0^\infty e^{-\beta x} W_{\Phi_r} (x) \diff x &= \frac 1
{\psi(\beta+\Phi_r)-r}, \quad \beta > 0.
\end{align*}

The smoothness and asymptotic behaviors around zero of the scale function are particularly important in our analysis.  We summarize these in the remark given immediately below.
\begin{remark} \label{remark_smoothness_zero}
\begin{enumerate}
\item Assumption \ref{assump_no_atom} guarantees that $W^{(r)}$ is $C^1$ on $(0,\infty)$.  In particular, when $\sigma > 0$, then $W^{(r)}$ is $C^2$ on $(0,\infty)$.  Fore more details on the smoothness of the scale function, see Chan et al.\ \cite{Chan_2009}.
\item As in Lemmas 4.3 and 4.4 of 
\cite{Kyprianou_Surya_2007},
\begin{align*}
W^{(r)} (0) &= \left\{ \begin{array}{ll} 0, & \textrm{if $X$  is  of unbounded
variation} \\ \frac 1 {\mu}, & \textrm{if $X$  is  of bounded variation}
\end{array} \right\}, \\ W^{(r)'} (0+) &:= \lim_{x \downarrow 0}W^{(r)'} (x) =
\left\{ \begin{array}{ll}  \frac 2 {\sigma^2}, & \textrm{if  }  \sigma > 0 \\
\infty, & \textrm{if  }  \sigma = 0 \; \textrm{and} \; \Pi(0,\infty) = \infty \\
\frac {r + \Pi(0,\infty)} {\mu^2}, & \textrm{if $X$  is compound Poisson}
\end{array} \right\},
\end{align*}
where $\mu := c + \int_{( 0,1)}z\, \Pi(\diff z)$,
which is finite when $X$ is of bounded variation.
\end{enumerate}
\end{remark}

In \cite{Egami-Yamazaki-2011}, we have shown that a candidate optimal stopping time can be efficiently identified using the scale function.  Define the expected payoff corresponding to the down-crossing time \eqref{def_tau_A} by
\begin{align}
u_A(x) &:= u(x, \tau_A), \quad x, A \in \R, \label{u_A}
\end{align}
which equals $g(x)$ for $x \leq A$.
By combining the compensation formula for \lev processes and the resolvent measure written in terms of the scale function, this can be written in a semi-explicit form.  Let \begin{align}
\Psi_f(A) &:= \int_0^\infty e^{-\lapinv y} f(y+A)\diff y, \quad A \in \R, \label{big_H} \\
\Theta_f(x; A) &:= \int_{A}^x  W^{(r)}(x-y) f(y) \diff y, \quad x, A \in \R, \label{big_Theta}
\end{align}
and
\begin{align} \label{def_rhos}
\begin{split}
\rho_{g,A}^{(r)} &:= \int_{(0,\infty)} \Pi (\diff u)   \int_0^{u} e^{-\Phi_r z} (g(z+A-u)-g(A)) \diff z  \\ &\equiv \int_{(0,\infty)} \Pi (\diff u)   \int_A^{u+A} e^{-\Phi_r (y-A)} (g(y-u)-g(A)) \diff y, \quad A \in \R, \\
\varphi_{g,A}^{(r)}(x) &:=\int_{(0,\infty)} \Pi (\diff u)  \int_0^{u \wedge (x-A)} W^{(r)} (x-z-A) (g(z+A-u)-g(A)) \diff z, \quad x > A.
\end{split}
\end{align}

These integrals are well-defined if  
\begin{align}
&\int_0^\infty e^{-\lapinv y} |f(y+A) | \diff y < \infty, \quad A \in \R, \label{cond_f_integrability} \\
&g \in C^2, \quad \textrm{and} \quad \int_{[1,\infty)} \Pi (\diff u) \max_{A-u \leq \zeta \leq A} |g(\zeta)-g(A)| < \infty, \quad A \in \R. \label{cond_g_integrability}
\end{align}
If these are satisfied, we can write $u_A(x)$ as in  \eqref{u_A} for $x > A$ as the sum of the following three terms:
\begin{align}
\begin{split}
\Gamma_1(x;A) &:=  g(A) \left[ Z^{(r)}(x-A) - \frac r {\Phi_r} W^{(r)}(x-A) \right], \\
\Gamma_2(x;A) &:=  W^{(r)} (x-A) \rho_{g,A}^{(r)} - \varphi_{g,A}^{(r)}(x),  \\
\Gamma_3(x;A) &:=
W^{(r)} (x-A) \Psi_f(A)  -
\Theta_f(x; A).
\end{split} \label{Gammas}
\end{align}

Egami and Yamazaki \cite{Egami-Yamazaki-2011} obtained the \emph{first-order condition} that makes $ \partial u_A(x) / \partial A$ vanish and showed that it is equivalent to the \emph{continuous fit} condition $u_A(A+) := \lim_{x \downarrow A} u_A(x) = g(A)$ when $X$ is of bounded variation and to the \emph{smooth fit} condition $u_A'(A+) := \lim_{x \downarrow A} u_{A}'(x)  = g'(A)$ when $\sigma > 0$.  Recall that $X$ is of bounded variation if and only if $\sigma = 0$ and \begin{align}
\int_{(0,\infty)} (1 \wedge z)\, \Pi(\diff z) < \infty.
\label{cond_bounded_variation}
\end{align} 
It has been shown that
\begin{align}
u_A(A+) = g(A) +  W^{(r)}(0) \Lambda(A), \quad A \in \R,  \label{cond_continuous_fit}
\end{align}
where
\begin{align}
\Lambda(A) \equiv \Lambda(A; f,g ) :=  -\frac r {\lapinv} g(A)  - \frac {\sigma^2} 2 g'(A) +  \rho_{g,A}^{(r)}   + \Psi_f(A), \quad A \in \R.\label{Large_lambda}
\end{align}
In view of Remark \ref{remark_smoothness_zero}(2), for the unbounded variation case, continuous fit holds whatever the choice of $A$ is, while, for the bounded variation case, it holds if and only if $\Lambda(A)=0$.

Furthermore,  it has been shown by \cite{Egami-Yamazaki-2011}, on condition that there exists some $\delta > 0$ satisfying
\begin{align}
\int_{[1,\infty)} \Pi (\diff u)\sup_{0 \leq \xi \leq \delta} |g(A+\xi) - g(A+\xi-u)|  < \infty, \label{cond_g_integrability2}
\end{align}
we have
\begin{align}
\frac \partial {\partial A} u_A(x) = -  e^{\lapinv (x-A)} W_{\lapinv}' (x-A)  \Lambda(A), \quad x > A, \label{u_A_derivative}
\end{align}
where $W_{\lapinv}$ is defined in \eqref{W_scaled}.
 It is known that $W_{\lapinv}$ is increasing and hence, if $\Lambda(A)$ is monotonically increasing, the down-crossing time $\tau_A$ for such $A$ with $\Lambda(A)=0$ becomes a natural candidate for the optimal stopping time.

\subsection{Exponential/Linear Case}We first consider the case where $g$ admits the form:
\begin{align}
g(x) = K-   bx - \sum_{i=1}^N c_i e^{a_i x}, \quad x \in \R, \label{g_polynomial}
\end{align}
for some constants $K \in \R$, $b \geq 0$ and $c_i, a_i > 0$, $1 \leq i \leq N$, $N \geq 0$.   We assume without loss of generality that $a_i \neq a_j$ for $i \neq j$.  The conditions \eqref{cond_g_integrability} and \eqref{cond_g_integrability2} are satisfied by Assumption \ref{assump_tail}.  For $f$, we need a technical condition so that \eqref{cond_f_integrability} is  guaranteed.  We summarize the conditions in the Assumption given below.

\begin{assump} \label{assump_optimality}
We assume the following.
\begin{enumerate}
\item $f(\cdot)$ is continuous, piecewise differentiable, and increasing.  In addition, the growth of $f_-$ as $x \downarrow -\infty$ is at most polynomial and $\int_0^\infty e^{-\lapinv y} f_+(y+x )  \diff y < \infty$, $x \in \R$; these guarantee \eqref{cond_f_integrability}.
\item $g(\cdot)$ admits the form \eqref{g_polynomial} for some $K \in \R$, $b \geq 0$, and strictly positive constants $a_i$ and $c_i$, $1 \leq i \leq N$, $N \geq 0$ such that $a_i \neq a_j$ for any $i \neq j$, 
\end{enumerate}
\end{assump}

\begin{remark}  \label{remark_finiteness_of_f}Assumptions \ref{assump_tail} and \ref{assump_optimality}(1) guarantee that  $\E^x \left[ \int_0^\infty e^{-rt} f_-(X_t) \diff
t  \right] < \infty$ for all $x \in \R$; for its proof, see, e.g., \cite{Yamazaki_2013}. By this and  Corollary 8.9 of \cite{Kyprianou_2006}, 
\begin{align}
\E\left[   \int_0^{\infty} e^{-rt} f(X_t) \diff
t  \right] =  \int_{-\infty}^\infty \Big( \Phi_r'  e^{-\lapinv(y-x)} - W^{(r)}(x-y) \Big) f(y) \diff y \label{value_function_infty}
\end{align}
exists, where $\Phi_r'$ is the derivative of $\Phi_r$ with respect to $r$.

Moreover, this is finite. Indeed, $\E\left[   \int_0^{\infty} e^{-rt} f_+(X_t) 1_{\{X_t \geq 0 \}}\diff
t  \right] =  \int_0^\infty \Big( \Phi_r'  e^{-\lapinv(y-x)} - W^{(r)}(x-y) \Big) f_+(y) \diff y < \infty$
 by Assumption \ref{assump_optimality}(1) and because $W^{(r)}$ is zero on the negative half line. On the other hand, because $f$ is increasing, $\E\left[   \int_0^{\infty} e^{-rt} f_+(X_t) 1_{\{X_t < 0 \}}\diff
t  \right] \leq f_+(0) \E\left[   \int_0^{\infty} e^{-rt} 1_{\{X_t < 0 \}}\diff t  \right]  \leq   {f_+(0)} /{r}$.
\end{remark}




With Assumption \ref{assump_optimality}, we simplify \eqref{Large_lambda} using
\begin{align}
\varpi_r(a) := \left\{ \begin{array}{ll} \frac {r- \psi(a)} {\lapinv - a}, & a \neq \Phi_r\\ \psi'(\Phi_r) = \lim_{a \rightarrow \lapinv}\frac {r- \psi(a)} {\lapinv - a}, &  a = \Phi_r \end{array} \right\}, \quad a > 0. \label{varpi}
\end{align}
By the convexity of $\psi$, $\varpi_r(a) > 0$ for any $a > 0$.  The proof of the following lemma is given in Appendix \ref{proof_lemma_lambda_A_polynomial}.
\begin{lemma} \label{lemma_lambda_A_polynomial}
For every $A \in \R$, we have
\begin{align} 
\Lambda(A) = -\frac r {\lapinv}K +    b \Big(\frac r {\Phi_r^2}+ \frac {rA -\psi'(0+) } {\Phi_r}\Big) + \sum_{i=1}^N c_i e^{a_i A} \varpi_r (a_i)  + \Psi_f(A). \label{lambda_A_polynomial}
\end{align}
\end{lemma}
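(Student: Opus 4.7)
The plan is a direct computation. Recalling the definition
\[
\Lambda(A) = -\frac{r}{\Phi_r}g(A) - \frac{\sigma^2}{2}g'(A) + \rho_{g,A}^{(r)} + \Psi_f(A),
\]
substitute $g(x) = K - bx - \sum_{i=1}^N c_i e^{a_i x}$ and $g'(x) = -b - \sum_{i=1}^N c_i a_i e^{a_i x}$. This splits $\Lambda(A)$ into a constant term producing $-\frac{r}{\Phi_r}K$ directly, a linear term (proportional to $b$), and $N$ exponential terms (proportional to each $c_i e^{a_i A}$); the $\Psi_f(A)$ term is inert. Since $\Psi_f(A)$ and the constant term already match the claim, I only have to reduce the linear and exponential contributions.

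For $\rho_{g,A}^{(r)}$, use $g(z+A-u) - g(A) = -b(z-u) + \sum_{i=1}^N c_i e^{a_i A}(1 - e^{a_i(z-u)})$. For the exponential piece, compute
\[
\int_0^u e^{-\Phi_r z}\bigl(1 - e^{a_i(z-u)}\bigr)\diff z = \frac{1-e^{-\Phi_r u}}{\Phi_r} - \frac{e^{-\Phi_r u} - e^{-a_i u}}{a_i - \Phi_r},
\]
and assemble the total coefficient of $c_i e^{a_i A}$ (from $-\tfrac{r}{\Phi_r}g(A)$, $-\tfrac{\sigma^2}{2}g'(A)$, and the above). Plug in the identity $\psi(\Phi_r) = r$, written out from \eqref{laplace_exp}, to convert $\tfrac{r}{\Phi_r}$ into its $c$-drift, $\sigma^2$, and $\Pi$ pieces; after cancellation of the $\tfrac{1-e^{-\Phi_r z}}{\Phi_r}$ terms, the combination collapses to
\[
c + \frac{\sigma^2}{2}(\Phi_r + a_i) + \int_{(0,\infty)}\!\Bigl[z\,\mathbf{1}_{\{0<z<1\}} + \frac{e^{-\Phi_r z} - e^{-a_i z}}{\Phi_r - a_i}\Bigr]\Pi(\diff z),
\]
which is precisely $\varpi_r(a_i) = (r - \psi(a_i))/(\Phi_r - a_i)$ once one writes $\psi(\Phi_r) - \psi(a_i)$ from the Laplace exponent.

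For the linear piece, compute by direct integration
\[
\int_0^u e^{-\Phi_r z}(u - z)\diff z = \frac{u}{\Phi_r} - \frac{1-e^{-\Phi_r u}}{\Phi_r^2},
\]
so the $b$-contribution from $\rho_{g,A}^{(r)}$ is $b\int_{(0,\infty)}[\tfrac{u}{\Phi_r} - \tfrac{1-e^{-\Phi_r u}}{\Phi_r^2}]\Pi(\diff u)$. Adding the $b$-contributions from $-\tfrac{r}{\Phi_r}g(A)$ (which is $\tfrac{r A}{\Phi_r}b$) and $-\tfrac{\sigma^2}{2}g'(A)$ (which is $\tfrac{\sigma^2}{2}b$), I must check
\[
\frac{\sigma^2}{2} + \int_{(0,\infty)}\!\Bigl[\frac{u}{\Phi_r} - \frac{1-e^{-\Phi_r u}}{\Phi_r^2}\Bigr]\Pi(\diff u) = \frac{r}{\Phi_r^2} - \frac{\psi'(0+)}{\Phi_r}.
\]
Multiplying through by $\Phi_r^2$ and using both $\psi(\Phi_r) = r$ and $\psi'(0+) = c - \int_{[1,\infty)} z\,\Pi(\diff z)$ (valid under Assumption 2.2(3)), the truncations $z\mathbf{1}_{\{0<z<1\}}$ combine with the tail $z\mathbf{1}_{\{z \geq 1\}}$ to give a single $\Phi_r z$ term, and both sides reduce to $\tfrac{\sigma^2 \Phi_r^2}{2} + \int_{(0,\infty)}(e^{-\Phi_r z} - 1 + \Phi_r z)\Pi(\diff z)$.

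The main (only) obstacle is bookkeeping: interchanging the $\Pi(\diff u)$ integrals with the finite $z$-integrals, correctly tracking the $\mathbf{1}_{\{0<z<1\}}$ truncation when invoking $\psi(\Phi_r) = r$ and $\psi'(0+)$, and verifying that all resulting integrals are absolutely convergent (which follows from $\int_{(0,\infty)}(1 \wedge z^2)\Pi(\diff z) < \infty$ together with $\int_1^\infty z\,\Pi(\diff z) < \infty$ when $b > 0$). Once these are handled, the identification is a line-by-line match.
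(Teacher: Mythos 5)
Your proposal is correct and follows essentially the same route as the paper: split $g$ into its constant, linear, and exponential pieces, compute the inner $z$-integrals in $\rho_{g,A}^{(r)}$ explicitly, and use $\psi(\Phi_r)=r$ together with $\psi'(0+)=c-\int_{[1,\infty)}z\,\Pi(\diff z)$ to match coefficients; the only cosmetic difference is that the paper identifies the exponential coefficient via the tilted exponent $\psi_{a}(\Phi_r-a)=\psi(\Phi_r)-\psi(a)$, whereas you verify the identity $\varpi_r(a_i)=(\psi(\Phi_r)-\psi(a_i))/(\Phi_r-a_i)$ directly from \eqref{laplace_exp} --- both reduce to the same computation. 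The one omission is the degenerate case $a_i=\Phi_r$, where your intermediate expression $\frac{e^{-\Phi_r u}-e^{-a_i u}}{a_i-\Phi_r}$ is $0/0$ and $\varpi_r$ is defined as the limit $\psi'(\Phi_r)$; the paper disposes of this by a monotone-convergence limiting argument (its Case~2), and your proof should either do the same or note that the direct integration still goes through with $\int_0^u e^{-\Phi_r z}(1-e^{\Phi_r(z-u)})\diff z=\frac{1-e^{-\Phi_r u}}{\Phi_r}-u e^{-\Phi_r u}$.
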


In view of \eqref{lambda_A_polynomial} above, the function $\Lambda(A)$
 is clearly continuous and  increasing.   
Therefore, if $\lim_{A \downarrow -\infty}\Lambda(A) < 0 < \lim_{A \uparrow \infty}\Lambda(A)$, there exists a unique root $A^* \in \R$ such that $\Lambda(A^*) = 0$.  Otherwise, let $A^*=-\infty$ if $\lim_{A \downarrow -\infty}\Lambda(A) \geq 0$ and let $A^* = \infty$ if $\lim_{A \uparrow \infty}\Lambda(A) \leq 0$.  

\begin{remark} \label{remark_A_tilde} 
Except for the case $g$ is a constant, because $\Lambda(A)$ increases to $\infty$, we have $A^* < \infty$.
\end{remark}

With our assumption on the form of $g$, the value function can be written succinctly.   By Proposition 2 of Avram et al. \cite{Avram_et_al_2007} and because $\psi'(0+) \in (-\infty, \infty)$ by Assumption \ref{assump_tail},
\begin{align*}
\E^x [e^{-r \tau_0} X_{\tau_0}] = \overline{Z}^{(r)}(x) - \psi'(0+) \frac {Z^{(r)}(x)-1} r- \frac {r-\psi'(0+)\Phi_r} {\Phi_r^2} W^{(r)}(x), \quad x \in \R,
\end{align*}
where
\begin{align*}
\overline{Z}^{(r)}(x) := \int_0^x  Z^{(r)}(y) \diff y, \quad x \in \R.
\end{align*}
This together with \eqref{laplace_in_terms_of_z} gives, for any $x, A \in \R$,
\begin{align*}
\E^x [e^{-r \tau_A} X_{\tau_A}] = \overline{Z}^{(r)}(x-A) + \Big(A- \frac {\psi'(0+)} r \Big)Z^{(r)}(x-A) + \frac {\psi'(0+)} r - \frac {r-\psi'(0+)\Phi_r + r A \Phi_r} {\Phi_r^2} W^{(r)}(x-A).
\end{align*}

With the help of Exercise 8.7(ii) of \cite{Kyprianou_2006}, the expression \eqref{u_A}  becomes
\begin{align} \label{value_function_for_any_A}
\begin{split}
u_{A}(x) &= K \Big( Z^{(r)} (x-A) - \frac r {\lapinv} W^{(r)}(x-A) \Big)- \sum_{i=1}^N c_i e^{a_i x} \Big( Z_{a_i}^{(r - \psi(a_i))} (x-A) - \varpi_r(a_i) W_{a_i}^{(r-\psi(a_i))} (x-A) \Big) \\
&- b \Big[ \overline{Z}^{(r)}(x-A) + \Big(A- \frac {\psi'(0+)} r \Big)Z^{(r)}(x-A) + \frac {\psi'(0+)} r - \frac {r-\psi'(0+)\Phi_r + r A \Phi_r} {\Phi_r^2} W^{(r)}(x-A) \Big]
\\ &+ W^{(r)} (x-A) \Psi_f(A) - \Theta_f(x;A).
\end{split}
\end{align}
Moreover, if $A^* \in (-\infty, \infty)$, by how $A^*$ is chosen as in \eqref{lambda_A_polynomial} and by \eqref{scale_measure_change}, it can be simplified to
\begin{align}
\begin{split}
u_{A^*}(x) = K Z^{(r)} (x-A^*) - b \Big[ \overline{Z}^{(r)}(x-A^*) + \big(A^*- \frac {\psi'(0+)} r \big)Z^{(r)}(x-A^*) + \frac {\psi'(0+)} r \Big]\\
- \sum_{i=1}^N c_i e^{a_i x} Z_{a_i}^{(r - \psi(a_i))} (x-A^*) - \Theta_f(x; A^*).
\end{split} \label{value_function}
\end{align}
The verification of optimality requires the following smoothness properties, whose  proofs are given in Appendix \ref{proof_lemma_smoothness}.
\begin{lemma} \label{lemma_smoothness} Suppose $-\infty < A^* \leq \infty$.
\begin{enumerate}
\item $u_{A^*}(x)$ is $C^1$ on $\R \backslash \{A^*\}$.
\item In particular, when $X$ is of unbounded variation, $u_{A^*}(x)$ is $C^2$ on $\R \backslash \{A^*\}$.
\end{enumerate}
\end{lemma}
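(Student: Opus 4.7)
The plan is to exploit the piecewise structure of $u_{A^*}$. On $(-\infty,A^*]$, $u_{A^*}=g$, which by \eqref{g_polynomial} is a finite sum of affine and exponential functions, hence $C^\infty$ on $\mathbb{R}$; so I need only verify smoothness of the expression \eqref{value_function} on the open ray $(A^*,\infty)$ and then check continuity of the derivatives across points of $(A^*,\infty)$.

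For part (1), I would differentiate \eqref{value_function} termwise on $(A^*,\infty)$. The identities $Z^{(r)}{}'(x)=rW^{(r)}(x)$ and $\overline{Z}^{(r)}{}'(x)=Z^{(r)}(x)$, combined with $W^{(r)}\in C^1(0,\infty)$ from Remark \ref{remark_smoothness_zero}(1) (valid under Assumption \ref{assump_no_atom}), handle the $Z^{(r)}$ and $\overline{Z}^{(r)}$ summands. For the term $c_i e^{a_i x}Z_{a_i}^{(r-\psi(a_i))}(x-A^*)$, the measure change to $\p_{a_i}$ replaces $\Pi(\diff u)$ by $e^{-a_i u}\Pi(\diff u)$ via \eqref{psi_a}, which remains atomless by Assumption \ref{assump_no_atom}; hence $W_{a_i}^{(r-\psi(a_i))}\in C^1(0,\infty)$ by the same remark, and the product rule yields $C^1$-smoothness of the product. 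For the convolution $\Theta_f(x;A^*)=\int_{A^*}^x W^{(r)}(x-y)f(y)\,\diff y$ in \eqref{big_Theta}, Leibniz's rule gives
\[
\frac{\partial}{\partial x}\Theta_f(x;A^*)=W^{(r)}(0)\,f(x)+\int_{A^*}^{x}W^{(r)'}(x-y)\,f(y)\,\diff y,
\]
which is continuous in $x>A^*$ since $f$ is continuous by Assumption \ref{assump_optimality}(1) and the kernel $W^{(r)'}$ is integrable near $0$, because $\int_0^{\epsilon}W^{(r)'}(z)\,\diff z=W^{(r)}(\epsilon)-W^{(r)}(0)<\infty$. Summing yields $u_{A^*}\in C^1(\mathbb{R}\setminus\{A^*\})$.

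For part (2), the unbounded variation hypothesis forces $W^{(r)}(0)=0$ by Remark \ref{remark_smoothness_zero}(2), which kills the non-integral boundary contribution in $\partial_x\Theta_f$ and thereby admits a second differentiation. Together with the $C^2$-smoothness of $W^{(r)}$ on $(0,\infty)$ available in this regime (cf.\ Chan et al.\ \cite{Chan_2009}), which transfers to $W_{a_i}^{(r-\psi(a_i))}$ through the identity \eqref{scale_measure_change}, a second Leibniz differentiation of each summand of \eqref{value_function} goes through on any compact subinterval of $(A^*,\infty)$, by dominated convergence against the local majorants provided by continuity of $W^{(r)''}$ and $W_{a_i}^{(r-\psi(a_i))''}$. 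The main technical obstacle is precisely this second differentiation of $\Theta_f$: absent the cancellation $W^{(r)}(0)=0$, one would be left with a divergent endpoint term involving $W^{(r)'}(0+)$, so the unbounded variation hypothesis is doing essential work, and the argument reduces, away from $x=A^*$, to the continuity of the convolution $\int_{A^*}^x W^{(r)''}(x-y)f(y)\,\diff y$ on $(A^*,\infty)$.
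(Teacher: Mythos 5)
Your part (1) is essentially sound, though it takes a different route from the paper: you differentiate the convolution kernel directly (Leibniz plus integrability of $W^{(r)'}$ near $0$), whereas the paper first integrates by parts in $\Theta_f$ to rewrite it as $\frac 1 r \bigl[ -f(x) + f(A^*) Z^{(r)}(x-A^*) + \int_{A^*}^x f'(y) Z^{(r)}(x-y)\,\diff y \bigr]$, shifting the derivative onto $f$. For a first derivative both routes work.

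Part (2), however, has a genuine gap, and it sits exactly in the case the lemma is designed to cover: $X$ of unbounded variation with $\sigma = 0$. First, you invoke ``$C^2$-smoothness of $W^{(r)}$ on $(0,\infty)$ available in this regime,'' but Remark \ref{remark_smoothness_zero}(1) guarantees $W^{(r)} \in C^2(0,\infty)$ only when $\sigma > 0$; for pure-jump unbounded variation processes only $C^1$ is available, so the majorants you propose for dominated convergence do not exist. Second, even granting $W^{(r)} \in C^2$, your second Leibniz differentiation of $\partial_x \Theta_f(x;A^*) = \int_{A^*}^x W^{(r)'}(x-y) f(y)\,\diff y$ produces the endpoint term $W^{(r)'}(0+) f(x)$, and by Remark \ref{remark_smoothness_zero}(2) one has $W^{(r)'}(0+) = \infty$ when $\sigma = 0$ and $\Pi(0,\infty) = \infty$; correspondingly $\int_0^\epsilon W^{(r)''}(z)\,\diff z = W^{(r)'}(\epsilon) - W^{(r)'}(0+)$ diverges, so the convolution $\int_{A^*}^x W^{(r)''}(x-y) f(y)\,\diff y$ to which you reduce the claim need not converge. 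The cancellation $W^{(r)}(0)=0$ removes the first-order boundary term but does nothing to tame these second-order quantities. The paper avoids all of this: after the integration by parts, $\Theta_f'(x;A^*) = f(A^*) W^{(r)}(x-A^*) + \int_{A^*}^x f'(y) W^{(r)}(x-y)\,\diff y$, so a second differentiation uses $W^{(r)}(0)=0$ at the endpoint and produces only the kernel $W^{(r)'}$, never $W^{(r)''}$; likewise the remaining terms of \eqref{value_function} are built from $Z^{(r)}$, $\overline{Z}^{(r)}$ and $Z_{a_i}^{(r-\psi(a_i))}$, whose second derivatives equal $r$ (resp.\ $r - \psi(a_i)$) times a first derivative of a scale function. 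Thus $C^1$-smoothness of $W^{(r)}$ suffices throughout. To repair your argument you should adopt this transfer of one derivative onto $f$ (exploiting that $f$ is continuous and increasing) rather than differentiating the kernel twice.
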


Herein, we add a remark concerning continuous/smooth fit.  
The following remark confirms the results in \cite{Egami-Yamazaki-2011} and further verifies that smooth fit holds whenever $X$ is of unbounded variation even when $\sigma = 0$.  This observation only requires the asymptotic behavior of the scale function near zero as in Remark \ref{remark_smoothness_zero}(2).
\begin{remark}[continuous/smooth fit] \label{remark_fit} Suppose $-\infty < A^* < \infty$.
\begin{enumerate}
\item Continuous fit holds (i.e.\ $u_{A^*}(A^*+) = g(A^*)$) because, by \eqref{value_function}, $Z^{(r)}(0) = Z_{a_i}^{(r - \psi(a_i))} (0) = 1$ and $\lim_{x \downarrow A^*}\Theta_f(x; A^*) = 0$.
\item In particular, when $X$ is of unbounded variation, smooth fit holds  (i.e.\ $u'_{A^*}(A^*+) = g'(A^*)$) because 
\begin{align*}
u_{A^*}'(x) &= K r W^{(r)} (x-A^*) - b \Big[ Z^{(r)}(x-A^*) + r \big(A^*- \frac {\psi'(0+)} r \big)W^{(r)}(x-A^*)  \Big]\\ &- \sum_{i=1}^N c_i e^{a_i x} (r - \psi(a_i))  W_{a_i}^{(r - \psi(a_i))} (x-A^*) - \sum_{i=1}^N a_i c_i e^{a_i x}  Z_{a_i}^{(r - \psi(a_i))} (x-A^*) - \Theta_f'(x; A^*) 
\\
&\xrightarrow{x \downarrow A^*}  - b- \sum_{i=1}^N a_i c_i e^{a_i A^*} = g'(A^*),
\end{align*}
thanks to $W^{(r)}(0) = W_{a_i}^{(r - \psi(a_i))} (0)= 0$, $Z^{(r)}(0) =  Z_{a_i}^{(r - \psi(a_i))} (0) = 1$ and $\lim_{x \downarrow A} \Theta_f'(x;A) = 0$; see also the proof of Lemma \ref{lemma_smoothness}. 
\end{enumerate}
\end{remark}

We now state the main results of this subsection.   The proof is given in Appendix \ref{proof_theorem_polynomial}.
\begin{proposition} \label{theorem_polynomial}
\begin{enumerate}
\item If $-\infty < A^* < \infty$, the stopping time $\tau_{A^*} := \inf \left\{ t \geq 0: X_t \leq A^* \right\}$
is optimal over $\S$ and the value function is $u(x)  = u_{A^*}(x)$ as in \eqref{value_function} for all $x \in \R$.

\item If $A^*=\infty$, immediate stopping is always optimal and $u(x) = u_{\infty}(x) := g(x)$ for any $x \in \R$.
\item If $A^* = -\infty$, it is never optimal to stop (i.e.\ $\tau^* = \infty$ a.s.\ is optimal), and the value function is $u(x) = u_{-\infty}(x)$ that is given in \eqref{value_function_infty}.
\end{enumerate}
\end{proposition}

\subsection{For a general concave and decreasing $g$} \label{subsection_single_general}

We now relax the assumption on $g$ and consider a general concave and decreasing function $g$.  
We also drop the continuity assumption on $f$.
\begin{assump} \label{assump_optimality2}
We assume the following.
\begin{enumerate}
 \item $f(\cdot)$ is  increasing.  In addition, the growth of $f_-$ as $x \downarrow -\infty$ is at most polynomial and $\int_0^\infty e^{-\lapinv y} f_+(y+x )  \diff y < \infty$, $x \in \R$.
\item $g(\cdot)$ is twice-differentiable, concave and monotonically  decreasing such that \eqref{cond_g_integrability} and \eqref{cond_g_integrability2} hold.
\end{enumerate}
\end{assump}
Under this assumption, we see that $\Lambda(A)$ as in \eqref{Large_lambda} is continuous and increasing.  Indeed, we have 
\begin{align*}
\frac \partial {\partial A} \left[ - \frac r {\Phi_r} g(A) - \frac {\sigma^2} 2 g'(A)\right] =  -\frac r {\lapinv} g'(A)  - \frac {\sigma^2} 2 g''(A) \geq 0.
\end{align*}
Moreover,  $\rho^{(r)}_{g,A}$ is increasing by the concavity of $g$.  On the other hand, $\Psi_f(A)$ is increasing because $f$ is.
%
 Therefore, we again define $A^*$ in the same way as  the unique root of $ \Lambda(A)=0$ (if it exists).  
The proof of the following result is given in Appendix \ref{proof_proposition_general}.

%
%
%

\begin{proposition} \label{proposition_general}
Suppose Assumption \ref{assump_optimality2}.
\begin{enumerate}
\item When $-\infty < A^* < \infty$,  then $\tau_{A^*}$ is optimal over $\widetilde{\mathcal{S}}$ and the value function is given by  
\begin{align}
\widetilde{u}(x) = u_{A^*}(x) =  g(A^*)  Z^{(r)}(x-A^*) + W^{(r)} (x-A^*)  \frac {\sigma^2} 2 g'(A^*) - \varphi_{g,A^*}^{(r)}(x)  -
\Theta_f(x; A^*), \quad x > A^*. \label{u_tilde}
\end{align}
For $x \leq A^*$, we have $\widetilde{u}(x) = g(x)$.
\item If $A^*=\infty$, immediate stopping is always optimal and $\widetilde{u}(x) = u_{\infty}(x) :=g(x)$ for any $x \in \R$.
\item If $A^*=-\infty$, then $\tau^*=\infty$ a.s.\  is optimal over $\S$ and the value function is $u(x) = u_{-\infty}(x)$ that is given in \eqref{value_function_infty}.
\end{enumerate}
\end{proposition}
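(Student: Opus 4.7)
The plan is to exploit the derivative identity \eqref{u_A_derivative}, which, combined with the monotonicity of $\Lambda(A)$ established in the paragraph preceding the proposition and the strict positivity of $W_{\Phi_r}'$, shows that $\partial u_A(x)/\partial A$ has the opposite sign to $\Lambda(A)$. This immediately gives the three cases: if $A^* \in (-\infty,\infty)$, $u_A(x)$ is maximized over $A$ at $A = A^*$, so $\tau_{A^*}$ is optimal over $\widetilde{\mathcal{S}}$; if $A^* = \infty$, $u_A(x)$ is increasing in $A$ with limit $g(x)$ as $A \uparrow \infty$ (since $\tau_A \downarrow 0$); and if $A^* = -\infty$, $u_A(x)$ is decreasing in $A$ with limit $V(x) := \E^x\!\int_0^\infty e^{-rt} f(X_t)\diff t$ as $A \downarrow -\infty$ (since $\tau_A \uparrow \infty$).

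For the explicit formula \eqref{u_tilde} in case (1), decompose $u_{A^*}(x) = \Gamma_1(x;A^*)+\Gamma_2(x;A^*)+\Gamma_3(x;A^*)$ via \eqref{Gammas}. Collecting the coefficient of $W^{(r)}(x-A^*)$ yields $-\frac{r}{\Phi_r}g(A^*)+\rho_{g,A^*}^{(r)}+\Psi_f(A^*) = \Lambda(A^*)+\frac{\sigma^2}{2}g'(A^*) = \frac{\sigma^2}{2}g'(A^*)$, where the last step uses $\Lambda(A^*)=0$; the remaining terms reassemble into $g(A^*)Z^{(r)}(x-A^*) - \varphi_{g,A^*}^{(r)}(x) - \Theta_f(x;A^*)$, proving \eqref{u_tilde}. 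For case (3), the standard $r$-resolvent density of a spectrally negative \lev process evaluates $V(x)$ directly to the right-hand side of \eqref{value_function_infty}.

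The final step, upgrading case (3) to optimality over all of $\S$, runs as follows. For any $A > x$, right-continuity of $X$ forces $\tau_A = 0$, so $u_A(x) = g(x)$; combined with $u_A(x) \leq V(x)$ for all $A$ (from the monotonicity of $u_A$ in $A$), this delivers $g(x) \leq V(x)$ pointwise on $\R$. An application of the strong Markov property at $\tau$ then gives, for any $\tau \in \S$,
\begin{align*}
u(x,\tau) - V(x) = \E^x\!\left[e^{-r\tau}\bigl(g(X_\tau) - V(X_\tau)\bigr)\, 1_{\{\tau<\infty\}}\right] \leq 0,
\end{align*}
so $\tau^* = \infty$ attains the supremum over $\S$. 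The main technical obstacle lies in rigorously justifying the limit $u_A(x) \to V(x)$ as $A \downarrow -\infty$ in case (3): this requires a dominated-convergence argument with dominating function $e^{-rt}|f(X_t)|$, where the integrability of $f_+$ in the sense of \eqref{cond_f_integrability} together with the lower bound $f(-\infty) > -\infty$ on $f_-$ jointly provide the needed control.
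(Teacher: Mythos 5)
Your overall route coincides with the paper's: maximize $A \mapsto u_A(x)$ using the derivative identity \eqref{u_A_derivative} together with the monotonicity of $\Lambda$, obtain \eqref{u_tilde} by substituting $\Lambda(A^*)=0$ into the coefficient of $W^{(r)}(x-A^*)$ in the decomposition \eqref{Gammas} (your computation here is exactly the paper's), and handle $A^*=-\infty$ by a monotone limit, the resolvent identity, and the strong Markov property (you spell out the last step more explicitly than the paper does, which is a plus).

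There is, however, one concrete gap, and it recurs in all three cases: the identity \eqref{u_A_derivative} only governs $u_A(x)$ for $A<x$, while for $A\geq x$ one has $u_A(x)=g(x)$ identically. The map $A\mapsto u_A(x)$ is in general \emph{discontinuous} at $A=x$ when $X$ has bounded variation, because by \eqref{cond_continuous_fit} the left limit is $u_x(x+)=g(x)+W^{(r)}(0)\Lambda(x)$ with $W^{(r)}(0)>0$ (Remark \ref{remark_smoothness_zero}). So the statement that the derivative identity ``immediately gives'' that $u_A(x)$ is maximized at $A=A^*$ does not follow: you must separately compare $\sup_{A<x}u_A(x)$ against $g(x)$, and this is precisely where the paper invokes continuous fit together with the sign of $\Lambda(x)$ — namely $\Lambda(x)>0$ when $x>A^*$ gives $u_{A^*}(x)\geq u_x(x+)\geq g(x)$, and $\Lambda(x)<0$ when $x<A^*$ gives $u_A(x)\leq u_x(x+)\leq g(x)$ for $A<x$. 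The same issue affects your case (2) (the supremum is attained exactly at $A\geq x$ with value $g(x)$; it is not a limit as $A\uparrow\infty$, and one still needs $u_A(x)\leq g(x)$ for $A<x$ via the jump term) and your case (3) (the claim ``$u_A(x)\leq V(x)$ for all $A$'' for $A\geq x$, i.e.\ $g\leq V$, again requires $V(x)\geq u_x(x+)=g(x)+W^{(r)}(0)\Lambda(x)\geq g(x)$ using $\Lambda\geq 0$). The signs all work out, so your conclusions are correct, but the bridging step across $A=x$ via \eqref{cond_continuous_fit} is a genuinely needed ingredient that your argument omits. A secondary, minor point: for the limit $u_A(x)\to V(x)$ the paper splits $f$ into $f_+$ and $f_-$ and uses monotone convergence (with $f_-$ bounded since $f(-\infty)>-\infty$ and $f$ is increasing); your dominated-convergence variant is equivalent but requires you to first establish $\E^x\int_0^\infty e^{-rt}|f(X_t)|\,\diff t<\infty$, which amounts to the same two facts.
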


%


\section{Multiple-stage problem} \label{section_multi_stage}
In this section, we extend to the scenario the firm can decrease its involvement in the project in multiple stages as defined in \eqref{our_problem_mult}.  As in the one-stage case, we consider two modes of optimality:
\begin{align}
U^{(M)}(x) &:= \sup_{(\tau^{(1)}, \ldots, \tau^{(M)}) \in \S_M } \sum_{m=1}^M \E^x \left[  \int_{\tau^{(m-1)}}^{\tau^{(m)}} e^{-rt}  F_m(X_t) \diff t +  e^{-r \tau^{(m)}}
g_m(X_{\tau^{(m)}})  1_{\{\tau^{(m)} < \infty \}} \right], \label{multi_stage_objective} \\
\widetilde{U}^{(M)}(x) &:= \sup_{(\tau^{(1)}, \ldots, \tau^{(M)}) \in \ST_M } \sum_{m=1}^M \E^x \left[  \int_{\tau^{(m-1)}}^{\tau^{(m)}} e^{-rt}  F_m(X_t) \diff t +  e^{-r \tau^{(m)}}
g_m(X_{\tau^{(m)}})  1_{\{\tau^{(m)} < \infty \}} \right],  \label{multi_stage_objective_tilde}
\end{align}
for all $x \in \R$ where we define $\tau^{(0)} := 0$ for notational brevity and the supremum is, respectively, over the set of increasing sequences of $M$ stopping times,
\begin{align*}
\S_M := \{ \tau^{(m)} \in \S, 1 \leq m \leq M: \tau^{(1)} \leq \cdots \leq \tau^{(M)} \},
\end{align*}
and over the set of increasing sequences of $M$ down-crossing times,
\begin{align*}
\ST_M := \{ \tau^{(m)} = \tau_{A_m}\in \ST, 1 \leq m \leq M:  A_1 \geq A_2 \cdots \geq A_M \}.
\end{align*}
Clearly, $\ST_M \subset \S_M $ and hence $\widetilde{U}^{(M)} \leq U^{(M)}$.

We first consider the case $g_m$ admits the form
\begin{align}
g_m(x) := K_m - b_m x - \sum_{i=1}^{N_m} c_{mi} e^{a_{mi} x} \quad 1 \leq m \leq M, \label{cond_g}
\end{align}
for some constants $K_m \in \R$, $b_m \geq 0$, $c_{mi}, a_{mi} > 0$, $1 \leq i \leq N_m$,
and show the optimality in the sense of  \eqref{multi_stage_objective}  as an extension of Proposition \ref{theorem_polynomial}.  We then consider a more general case where $g_m$  is twice-differentiable, concave and monotonically decreasing and show the optimality over $\ST_M$ as an extension of Proposition \ref{proposition_general}. Regarding the running reward function $F$, define the differences:
\begin{align*}
f_m := F_{m} - F_{m+1}, \quad 1 \leq m \leq M,
\end{align*}
with $F_{M+1} \equiv 0$.   As is also assumed in \cite{Boyarchenko_2006}, we consider the case  $f_m$ is increasing for each $m$.
Using the notation as in \eqref{u_notation}, we can then write for all $x \in \R$
\begin{align}
U^{(M)}(x) = \sup_{(\tau^{(1)}, \ldots, \tau^{(M)}) \in \S_M }  \sum_{m=1}^M  u(x, \tau^{(m)}; f_m, g_m), \label{multi_stage_objective2} \\
\widetilde{U}^{(M)}(x) = \sup_{(\tau^{(1)}, \ldots, \tau^{(M)}) \in \ST_M }  \sum_{m=1}^M u(x, \tau^{(m)}; f_m, g_m). \label{multi_stage_objective2_tilde}
\end{align}
In summary, we assume Assumptions \ref{assump_h_multi} and \ref{assump_h_multi2} below for \eqref{multi_stage_objective} and \eqref{multi_stage_objective_tilde}, respectively.
\begin{assump} \label{assump_h_multi}
For each  $1 \leq m  \leq M$, we assume that $f_m$ and $g_m$ satisfy Assumption \ref{assump_optimality}.  
\end{assump}
\begin{assump} \label{assump_h_multi2}
For each  $1 \leq m  \leq M$, we assume that $f_m$ and $g_m$ satisfy Assumption \ref{assump_optimality2}.  
\end{assump}

As is clear from the problem structure, simultaneous stoppings (i.e.\ $\tau_k = \cdots = \tau_{k+l}$ a.s.\ for some $k$ and $l$) may be optimal in case it is not advantageous to stay in some intermediate stages.  For this reason,  define, for any subset $\II = \{ \min \II, \min \II + 1, \ldots, \max \II \}\subset \{1, \ldots, M\}$,
\begin{align}
g_{\II} := \sum_{i \in \II} g_i \quad \textrm{and} \quad  f_{\II} := F_{\min \II} - F_{\max \II+1},  \label{def_g_f}
\end{align} 
and consider an \emph{auxiliary one-stage problem} \eqref{our_problem} with $g = g_{\II}$ and $f = f_{\II}$.
Notice that Assumption \ref{assump_h_multi} (resp.\  Assumption \ref{assump_h_multi2}) guarantees that $f_{\II}$ and  $g_{\II}$ also satisfy Assumption \ref{assump_optimality} (resp.\ Assumption \ref{assump_optimality2}) for any $\II$. 
%
Hence Propositions \ref{theorem_polynomial} and  \ref{proposition_general} apply.   

Let 
\begin{align}
\Lambda_m(A) :=  \Lambda(A; f_m, g_m), \quad A \in \R, 1 \leq m \leq M,
\end{align}
as the function \eqref{Large_lambda} for $(f_m,g_m)$.  Because $\rho_{h_1 + h_2,A}^{(r)} \equiv \rho_{h_1,A}^{(r)} + \rho_{h_2,A}^{(r)}$ and $\Psi_{h_1 + h_2}(A) \equiv \Psi_{h_1}(A)+\Psi_{h_2}(A)$ for any measurable functions $h_1$ and $h_2$ in view of \eqref{big_H}  and \eqref{def_rhos}, we see that 
\begin{align}
\Lambda_{\II}(A) :=  \Lambda(A; f_{\II}, g_{\II}) = \Lambda\Big(A; \sum_{m \in \II}f_m, \sum_{m \in \II} g_m \Big) = \sum_{m \in \II} \Lambda_m (A) \label{Lambda_II}
\end{align}
is increasing and corresponds to the function \eqref{Large_lambda} for $(f_{\II},g_{\II})$.   In particular, under Assumption \ref{assump_optimality}, this reduces by Lemma \ref{lemma_lambda_A_polynomial} to
\begin{align*}
\Lambda_{\II}(A) &= \sum_{m \in \II} \Big[ -\frac r {\lapinv}K_m +  b_m \Big(\frac r {\Phi_r^2}+ \frac {rA -\psi'(0+) } {\Phi_r}\Big)  +  \sum_{i=1}^{N_m} c_{mi} e^{a_{mi} A}\varpi_r(a_{mi}) + \Psi_{f_m} (A) \Big]. 
\end{align*}
Now let $A_{\II}^*$ be the root of $\Lambda_{\II}(A)=0$ if it exists. If  $\lim_{A \uparrow \infty}\Lambda_{\II}(A) \leq 0$, we set $A_{\II}^*=\infty$; if $\lim_{A \downarrow -\infty}\Lambda_{\II}(A) \geq 0$, we set $A_{\II}^*=-\infty$.   For simplicity, let  $A_m^* := A_{\{m\}}^*$ for any $1 \leq m \leq M$.   Also 
define
\begin{align*}
u_{A}^{\II}(x)&:= u(x, \tau_A; f_{\II}, g_{\II}), \quad x, A \in \R.
\end{align*}
With these notations, the following is immediate by Propositions \ref{theorem_polynomial} and  \ref{proposition_general}.
\begin{corollary} \label{corollary_w}
Fix any $\II$ and $x \in \R$, and consider the problems:
\begin{align*}
u_{\II}(x) := \sup_{\tau \in \S}u(x, \tau; f_{\II}, g_{\II}) \quad \textrm{and} \quad \widetilde{u}_{\II}(x) := \sup_{\tau \in \ST}u(x, \tau; f_{\II}, g_{\II}). 
\end{align*}
Suppose Assumption \ref{assump_h_multi}. 
\begin{enumerate}
\item If $-\infty < A_{\II}^* < \infty$,  then 
\begin{align*}
u_{\II}(x)  = u_{A^*_{\II}}^{\II}(x) = \sum_{m \in \II} \Big( K_m Z^{(r)} (x-A_{\II}^*) - b_m \Big[ \overline{Z}^{(r)}(x-A_{\II}^*) + \big(A_{\II}^*- \frac {\psi'(0+)} r \big)Z^{(r)}(x-A_{\II}^*) + \frac {\psi'(0+)} r \Big] \\ - \sum_{i=1}^{N_m} c_{mi} e^{a_{mi} x} Z_{a_{mi}}^{(r - \psi(a_{mi}))} (x-A_{\II}^*) \Big) - \Theta_{f_{\II}}(x; A_{\II}^*),
\end{align*}
and
the stopping time $\tau_{A^*_{\II}} := \inf \left\{ t > 0: X_t \leq A^*_\II \right\}$
is optimal.

\item If $A^*_{\II}=\infty$, $u_{\II}(x) = g_{\II}(x)$ for any $x \in \R$ with the optimal stopping time $\tau^* = 0$ a.s.
\item If $A_{\II}^* = -\infty$, it is never optimal to stop, and the value function is given by
\begin{align}
u_{\II}(x) = \int_{-\infty}^\infty \big( \Phi'_r e^{-\lapinv(y-x)} - W^{(r)}(x-y) \big) f_{\II}(y) \diff y. \label{u_I}
\end{align}
\end{enumerate}
Suppose Assumption \ref{assump_h_multi2}. 
\begin{enumerate}
\item If $-\infty < A^*_{\II} < \infty$,  then  $\tau_{A^*_{\II}}$ is optimal and 
\begin{align*}
\widetilde{u}_{\II}(x) = u_{A^*_{\II}}^{\II}(x) =  g_{\II}(A^*_{\II})  Z^{(r)}(x-A^*_{\II}) + W^{(r)} (x-A^*_{\II})  \frac {\sigma^2} 2 g_{\II}'(A^*_{\II}) - \varphi_{g_{\II},A^*_{\II}}^{(r)}(x)  -
\Theta_{f_{\II}}(x; A^*_{\II}), \quad x > A^*_{\II}.
\end{align*}
For $x \leq A^*_{\II}$, we have $\widetilde{u}(x) = g(x)$.
\item If $A^*_{\II}=\infty$, $\widetilde{u}_{\II}(x) = g_{\II}(x)$ for any $x \in \R$ with optimal stopping time $\tau^* = 0$ a.s.
\item If $A^*_{\II}=-\infty$, then $\tau^*=\infty$ a.s.\ is optimal over $\S$ and \eqref{u_I} holds.
\end{enumerate}
\end{corollary}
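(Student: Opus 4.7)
The plan is to apply Propositions \ref{theorem_polynomial} and \ref{proposition_general} directly to the auxiliary one-stage problem with reward pair $(f_{\II}, g_{\II})$, once we check that this pair satisfies the relevant assumption. Under Assumption \ref{assump_h_multi}, each $g_m$ has the form \eqref{g_polynomial}, so $g_{\II} = \sum_{m \in \II} g_m$ is again of that form with constant $\sum_{m \in \II} K_m$, linear coefficient $\sum_{m \in \II} b_m$, and exponential terms obtained by concatenation of the $(c_{mi}, a_{mi})$ (recall $a_{mi} > 0$, and repeated exponents can be absorbed without affecting the subsequent formulas). The function $f_{\II} = \sum_{m \in \II} f_m$ is continuous, increasing, and satisfies $f_{\II}(-\infty) > -\infty$ and $\int_0^\infty e^{-\Phi_r y} (f_{\II})_+(y) \diff y < \infty$ since each summand does. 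The condition $\psi'(0+) \in (-\infty,\infty)$ in the case $\sum_{m \in \II} b_m > 0$ is likewise inherited. Under Assumption \ref{assump_h_multi2}, twice-differentiability, concavity, and monotone decrease of $g_{\II}$, as well as \eqref{cond_g_integrability} and \eqref{cond_g_integrability2}, are all preserved under finite sums, and $f_{\II}$ remains increasing with the required integrability. Hence $(f_{\II}, g_{\II})$ satisfies Assumption \ref{assump_optimality} (resp.\ Assumption \ref{assump_optimality2}).

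Next, I would verify that the threshold $A^*_{\II}$ of the corollary is exactly the threshold that Propositions \ref{theorem_polynomial} and \ref{proposition_general} associate to $(f_{\II}, g_{\II})$. This is immediate from the linearity identity \eqref{Lambda_II}: since $\rho^{(r)}_{\,\cdot\,,A}$ and $\Psi_{\cdot}(A)$ are linear in $g$ and $f$, the function $\Lambda(A; f_{\II}, g_{\II})$ coincides with $\sum_{m \in \II} \Lambda_m(A)$, so its unique root (when it exists) is $A^*_{\II}$ as defined. Because each $\Lambda_m$ is increasing (by Lemma \ref{lemma_lambda_A_polynomial} together with $\varpi_r > 0$ in case (i), and by the computation in Section \ref{subsection_single_general} in case (ii)), $\Lambda_{\II}$ is increasing, and the trichotomy for $A^*_{\II}$ matches the three cases of Propositions \ref{theorem_polynomial} and \ref{proposition_general}.

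Once the assumption check and the identification of the threshold are in place, the value function formulas are obtained by reading off \eqref{value_function} and \eqref{u_tilde} with $(f, g, K, b, \{(c_i, a_i)\})$ replaced by the aggregated data of $(f_{\II}, g_{\II})$, then distributing the sum over $m \in \II$; this gives the closed-form expressions stated in parts (1) of the corollary. The boundary cases $A^*_{\II} = \pm \infty$ come from parts (2) and (3) of the two propositions verbatim, using $f_{\II}$ in place of $f$ in \eqref{value_function_infty}.

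The only substantive step — and the one that requires care — is the preservation of the assumptions under the aggregation defining $g_{\II}$ and $f_{\II}$, in particular that $g_{\II}$ retains the exponential-linear shape in case (i) and remains concave and decreasing in case (ii). Everything beyond that is a direct invocation of the one-stage results together with the linearity of $\Lambda$ in its data.
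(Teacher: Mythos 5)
Your proposal is correct and follows essentially the same route as the paper, which treats the corollary as an immediate consequence of Propositions \ref{theorem_polynomial} and \ref{proposition_general} once one notes that $(f_{\II},g_{\II})$ inherits Assumption \ref{assump_optimality} (resp.\ \ref{assump_optimality2}) and that $\Lambda_{\II}=\sum_{m\in\II}\Lambda_m$ by \eqref{Lambda_II}. Your extra care about coinciding exponents $a_{mi}$ across different $m$ (absorbed by summing the positive coefficients) is a detail the paper leaves implicit, but it does not change the argument.
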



\subsection{Two-stage problem}
 In order to gain intuition, we first consider the case with $M=2$ and obtain $U^{(2)}(x)$ and $\widetilde{U}^{(2)}(x)$ under Assumptions \ref{assump_h_multi} and  \ref{assump_h_multi2}, respectively.   Following the procedures discussed above, $A_{m}^* \in [-\infty, \infty]$, or the root of $\Lambda_m(A) = 0$, is well-defined for $m=1,2$.  As a special case of \eqref{def_g_f},
\begin{align}
f_2 \equiv F_2, \quad f_1 \equiv F_1 - F_2 \equiv F_1 - f_2, \quad \textrm{and} \quad
f_{\{1,2\}} \equiv F_1 \equiv f_1 + f_2. \label{F_relation_M_2}
\end{align} 
We shall consider the cases (i) $A_1^* > A_2^*$ and (ii)  $A_1^* \leq A_2^*$, separately.  For (i), we shall show that $(\tau_{A_1^*}, \tau_{A_2^*})$ is optimal.  For (ii), we shall show that simultaneous stoppings are optimal.
We first consider the former.
\begin{proposition} \label{lemma_monotone_case}
If $\infty \geq A_{1}^* > A_{2}^* \geq -\infty$, then $(\tau_{A_{1}^*}, \tau_{A_2^*})$ is optimal; the value function is given by $U^{(2)}(x) = \sum_{m=1,2} u_{A^*_{m}}^{\{m\}}(x)$ and $\widetilde{U}^{(2)}(x) = \sum_{m=1,2} u_{A^*_{m}}^{\{m\}}(x)$ under Assumptions \ref{assump_h_multi} and \ref{assump_h_multi2}, respectively.  In particular, under Assumption \ref{assump_h_multi} and if $\infty > A_{1}^* > A_{2}^* > -\infty$,
\begin{align}
\begin{split}
U^{(2)}(x) 
&= \sum_{m=1,2} \Big( K_m Z^{(r)} (x-A^*_m) - b_m \Big[ \overline{Z}^{(r)}(x-A_{m}^*) + \big(A_{m}^*- \frac {\psi'(0+)} r \big)Z^{(r)}(x-A_{m}^*) + \frac {\psi'(0+)} r \Big] 
\\& \qquad - \sum_{i=1}^{N_m} c_{mi} e^{a_{mi} x} Z_{a_{mi}}^{(r - \psi(a_{mi}))} (x-A^*_m) \Big) \\ &- \int_{A^*_1}^x W^{(r)}(x-y) F_1(y) \diff y - \int_{A^*_2}^{A^*_1} W^{(r)}(x-y) F_2(y) \diff y. 
\end{split}
\label{J_2}
\end{align}
\end{proposition}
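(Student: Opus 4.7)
The plan is to exploit the additive decomposition \eqref{multi_stage_objective2}, which already rewrites the two-stage objective as $\sum_{m=1,2} u(x,\tau^{(m)};f_m,g_m)$. This reduces the proof to two steps: first, bound $U^{(2)}(x)$ and $\widetilde{U}^{(2)}(x)$ from above by treating each summand as an \emph{unconstrained} one-stage problem; second, check that the pair $(\tau_{A_1^*},\tau_{A_2^*})$ of the two one-stage optimizers is admissible whenever $A_1^*>A_2^*$, so that the upper bound is actually attained.

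For the first step, Proposition \ref{theorem_polynomial} applied to $(f_m,g_m)$ gives $\sup_{\tau\in\S} u(x,\tau;f_m,g_m)=u_{A_m^*}^{\{m\}}(x)$, attained at $\tau_{A_m^*}$ under Assumption \ref{assump_h_multi}; similarly, Proposition \ref{proposition_general} delivers the analogous identity with supremum over $\ST$ under Assumption \ref{assump_h_multi2}. Enlarging the feasible set from $\S_2$ (resp.\ $\ST_2$) to $\S\times\S$ (resp.\ $\ST\times\ST$) and summing over $m$ yields the upper bound $U^{(2)}(x),\widetilde{U}^{(2)}(x)\leq u_{A_1^*}^{\{1\}}(x)+u_{A_2^*}^{\{2\}}(x)$. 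For the second step, the pathwise inclusion $\{X_t\leq A_2^*\}\subset\{X_t\leq A_1^*\}$, which is immediate from $A_1^*>A_2^*$, gives $\tau_{A_1^*}\leq\tau_{A_2^*}$ almost surely; hence $(\tau_{A_1^*},\tau_{A_2^*})\in\S_2$, and it also lies in $\ST_2$ by the strict inequality between the thresholds. The upper bound is therefore achieved in both problems. The boundary cases $A_1^*=\infty$ (immediate first stopping) and $A_2^*=-\infty$ (never stopping at the second stage) are handled in the same way under the conventions $\tau_\infty\equiv 0$ and $\tau_{-\infty}\equiv\infty$.

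It then remains only to derive the explicit formula \eqref{J_2} under Assumption \ref{assump_h_multi}. Each $u_{A_m^*}^{\{m\}}(x)$ is given in closed form by \eqref{value_function} (equivalently, by Corollary \ref{corollary_w}(1) with $\II=\{m\}$), so the algebraic task is merely to combine the two running-reward integrals. Using $A_2^*<A_1^*<x$ together with $f_1+f_2=F_1$ and $f_2=F_2$ from \eqref{F_relation_M_2},
\begin{align*}
\Theta_{f_1}(x;A_1^*)+\Theta_{f_2}(x;A_2^*)=\int_{A_1^*}^{x} W^{(r)}(x-y)F_1(y)\diff y+\int_{A_2^*}^{A_1^*} W^{(r)}(x-y)F_2(y)\diff y,
\end{align*}
which produces the last two terms of \eqref{J_2}.

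I do not expect a serious obstacle here: the argument rests entirely on the observation that, when the unconstrained one-stage optimizers happen to be ordered compatibly with the admissibility constraint, that constraint becomes vacuous. The only small care is in verifying the pathwise ordering and in treating the corner cases $A_m^*\in\{\pm\infty\}$; the genuinely interesting difficulty, namely the case $A_1^*\leq A_2^*$ in which simultaneous stoppings may be preferable, falls outside the present proposition and will have to be addressed separately.
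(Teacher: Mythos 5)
Your proof is correct and follows essentially the same route as the paper: relax the ordering constraint to bound $U^{(2)}$ (resp.\ $\widetilde{U}^{(2)}$) by the sum of the two unconstrained one-stage values, observe that $A_1^*>A_2^*$ forces $\tau_{A_1^*}\leq\tau_{A_2^*}$ a.s.\ so the pair is admissible and attains the bound, and then merge the two $\Theta$-integrals via \eqref{F_relation_M_2}. The only cosmetic difference is that you invoke Propositions \ref{theorem_polynomial} and \ref{proposition_general} directly where the paper routes the same facts through Corollary \ref{corollary_w}.
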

\begin{proof}
Suppose Assumption \ref{assump_h_multi} holds. By relaxing the constraint that $\tau^{(1)} \leq \tau^{(2)}$, we can obtain an upper bound:
\begin{align*}
U^{(2)}(x) \leq \overline{U}^{(2)}(x) := \sum_{m=1,2} \sup_{\tau^{(m)} \in \S} u(x, \tau^{(m)}; f_m, g_m) = \sum_{m=1,2} u_{A^*_{m}}^{\{m\}}(x), 
\end{align*}
where the last equality holds by Corollary \ref{corollary_w}.  On the other hand, because $\tau_{A^*_1} \leq \tau_{A^*_2}$ a.s.\  (hence $(\tau_{A^*_1}, \tau_{A^*_2}) \in \S_2$) thanks to $A_1^*> A_2^*$,  we have  $U^{(2)}(x) \geq \overline{U}^{(2)}(x)$, as desired.  The same result holds under Assumption \ref{assump_h_multi2}  by relaxing the constraint that $A_1 \geq A_2$ and noticing that $(\tau_{A^*_1}, \tau_{A^*_2}) \in \ST_2$.

For the second claim, because $A_1^* > A_2^*$ and by \eqref{F_relation_M_2},
\begin{align*}
\sum_{m=1,2} \Theta_{f_m} (x, A_m^*) = \sum_{m=1,2}\int_{A^*_m}^x W^{(r)}(x-y) f_m(y) \diff y 
=\int_{A^*_1}^x W^{(r)}(x-y) F_1(y) \diff y + \int_{A^*_2}^{A_1^*} W^{(r)}(x-y) F_2(y) \diff y,
\end{align*}
and hence \eqref{J_2} holds in view of Corollary \ref{corollary_w}.

\end{proof}

Now consider the case  $-\infty \leq A_1^* \leq A_2^* \leq \infty$.
\begin{lemma} \label{lemma_region}
Suppose  $-\infty \leq A_1^* \leq A_2^* \leq \infty$.
Under Assumption \ref{assump_h_multi} (resp.\ Assumption \ref{assump_h_multi2}),  the first optimal stopping cannot occur on $(A_2^*, \infty)$; namely if $\tau^{*(1)}$ is the optimal first stopping time in the sense of \eqref{multi_stage_objective} (resp.\ \eqref{multi_stage_objective_tilde}), then $X_{\tau^{*(1)}} \in (-\infty, A_2^*]$ a.s. on $\{ \tau^{*(1)} < \infty \}$.
\end{lemma}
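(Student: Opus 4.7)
The plan is to reduce both modes of optimality to one-stage analyses and exploit that the objective $\sum_{m=1}^{2} u(x, \tau^{(m)}; f_m, g_m)$ is additively separable in $(\tau^{(1)}, \tau^{(2)})$, coupled only through the ordering constraint.

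For the $\ST_2$ case (Assumption \ref{assump_h_multi2}), parametrize by thresholds $(B_1, B_2)$ with $B_1 \geq B_2$, so that the objective becomes $\phi(B_1, B_2) := u_{B_1}^{\{1\}}(x) + u_{B_2}^{\{2\}}(x)$. By \eqref{u_A_derivative}, $\partial_{B_m} u_{B_m}^{\{m\}}(x) = - e^{\lapinv (x-B_m)} W_{\lapinv}'(x-B_m) \Lambda_m(B_m)$, and since $\Lambda_m$ is increasing with unique root $A_m^*$, each factor is unimodal in $B_m$ with maximizer $A_m^*$. Splitting the outer supremum according to whether $B_1 \geq A_2^*$ or $B_1 < A_2^*$, and using the linearity identity $u_B^{\{1\}}(x) + u_B^{\{2\}}(x) = u_B^{\{1,2\}}(x)$ on the diagonal $B_1 = B_2 = B$ (and the decreasing property of $u_{B_1}^{\{1\}}(x)$ on $[A_1^*, \infty) \supseteq [A_2^*, \infty)$ for the off-diagonal piece),
\[
\sup_{B_1 \geq B_2} \phi(B_1, B_2) \;=\; \max \Bigl\{ u_{A_2^*}^{\{1,2\}}(x),\; \sup_{B < A_2^*} u_B^{\{1,2\}}(x) \Bigr\} \;=\; \sup_{B \leq A_2^*} u_B^{\{1,2\}}(x).
\]
By Corollary \ref{corollary_w} applied to $\II = \{1,2\}$, the right-hand side is attained at $B = A_{\{1,2\}}^*$; moreover $\Lambda_{\{1,2\}}(A_2^*) = \Lambda_1(A_2^*) + 0 \geq 0$ combined with the monotonicity of $\Lambda_{\{1,2\}}$ gives $A_{\{1,2\}}^* \leq A_2^*$, so the optimum is the simultaneous pair $(\tau_{A_{\{1,2\}}^*}, \tau_{A_{\{1,2\}}^*})$ and $X_{\tau^{*(1)}} \leq A_{\{1,2\}}^* \leq A_2^*$.

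For the $\S_2$ case (Assumption \ref{assump_h_multi}), I would apply strong Markov at $\tau^{(1)}$ and use the optimality of $\tau_{A_2^*}$ for the stage-2 continuation (Corollary \ref{corollary_w}) to reduce the problem to a one-stage problem for $\tau^{(1)}$:
\[
U^{(2)}(x) = \sup_{\tau^{(1)} \in \S} \E^x\!\left[ \int_0^{\tau^{(1)}} e^{-rt} F_1(X_t)\, \diff t + e^{-r \tau^{(1)}} \bigl(g_1 + u_{A_2^*}^{\{2\}}\bigr)\!\bigl(X_{\tau^{(1)}}\bigr) 1_{\{\tau^{(1)} < \infty\}} \right].
\]
Assume for contradiction $\p^x(\Omega_0) > 0$ for $\Omega_0 := \{\tau^{*(1)} < \infty,\, X_{\tau^{*(1)}} > A_2^*\}$. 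Introduce the delayed stopping $\sigma := \tau^{*(1)} + \tau_{A_2^*} \circ \theta_{\tau^{*(1)}}$ (equal to $\tau^{*(1)}$ off $\Omega_0$) and consider $(\sigma, \sigma) \in \S_2$. By strong Markov together with the linearity identity $u(y, \tau_{A_2^*}; F_1, g_1 + g_2) = u_{A_2^*}^{\{1\}}(y) + u_{A_2^*}^{\{2\}}(y)$ (valid because $X_{\tau_{A_2^*}} \leq A_2^*$ forces $u_{A_2^*}^{\{2\}} = g_2$ at the stopping point), the value improvement from the switch evaluates to $\E^x\bigl[e^{-r\tau^{*(1)}}\bigl(u_{A_2^*}^{\{1\}}(X_{\tau^{*(1)}}) - g_1(X_{\tau^{*(1)}})\bigr) 1_{\Omega_0}\bigr]$.

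The main obstacle is the strict inequality $u_{A_2^*}^{\{1\}}(y) > g_1(y)$ for $y > A_2^*$. Since $u_y^{\{1\}}(y) = g_1(y)$ (the down-crossing at level $y$ is immediate) and $A_2^* < y$, integrating \eqref{u_A_derivative} gives
\[
u_{A_2^*}^{\{1\}}(y) - g_1(y) \;=\; \int_{A_2^*}^{y} e^{\lapinv (y-A)}\, W_{\lapinv}'(y-A)\, \Lambda_1(A)\, \diff A.
\]
Under Assumption \ref{assump_h_multi}, the explicit form \eqref{lambda_A_polynomial} makes $\Lambda_1$ strictly positive on $(A_1^*, \infty) \supseteq (A_2^*, y]$ (the derivative of $\Lambda_1$ receives strictly positive contributions from its linear and exponential terms), and $W_{\lapinv}'(y-A) > 0$ for $A < y$; the integral is therefore strictly positive on $\Omega_0$, contradicting the optimality of $\tau^{*(1)}$.
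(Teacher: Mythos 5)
Your proof is correct, and it rests on the same engine as the paper's: the identity \eqref{u_A_derivative} together with the sign of $\Lambda_1$ above its root, yielding $u_{A_2^*}^{\{1\}}(y)>g_1(y)$ for $y>A_2^*$, so that continuing stage~1 down to $A_2^*$ strictly beats stopping stage~1 above $A_2^*$. The packaging differs, though. For the $\S_2$ case the paper argues pointwise: if stopping first at some $\hat{x}>A_2^*$ were optimal the value there would be $g_1(\hat{x})+u_{A_2^*}^{\{2\}}(\hat{x})$, which is dominated by the value $u_{A_2^*}^{\{1\}}(\hat{x})+u_{A_2^*}^{\{2\}}(\hat{x})$ of the simultaneous pair $(\tau_{A_2^*},\tau_{A_2^*})$; you instead run a pathwise improvement on the event $\Omega_0$, which is a slightly more careful version of the same contradiction. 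For the $\ST_2$ case you do something genuinely different: an explicit two-threshold optimization of $\phi(B_1,B_2)$ that not only proves the lemma but already delivers the conclusion of Proposition \ref{case_two_second} (simultaneous stopping at $A_{\{1,2\}}^*\le A_2^*$) for threshold strategies; the paper keeps the lemma minimal and defers that to the next proposition. Your route buys a self-contained treatment of the $\ST$ case; the paper's buys a single argument covering both cases at once.

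Two small imprecisions, neither fatal. First, in the $\S_2$ part you write $u_y^{\{1\}}(y)=g_1(y)$ because ``the down-crossing at level $y$ is immediate''; for bounded-variation $X$ it is not, and by \eqref{cond_continuous_fit} the correct boundary value is $\lim_{A\uparrow y}u_A^{\{1\}}(y)=g_1(y)+W^{(r)}(0)\Lambda_1(y)$, so your displayed identity for $u_{A_2^*}^{\{1\}}(y)-g_1(y)$ should carry the extra nonnegative term $W^{(r)}(0)\Lambda_1(y)$ (this only strengthens the inequality you need, exactly as in the paper's case split on bounded versus unbounded variation). Second, in the $\ST_2$ part you exhibit one optimizer with first threshold $\le A_2^*$; to match the lemma's phrasing that the first stop \emph{cannot} occur on $(A_2^*,\infty)$ you should add that any $B_1\in(A_2^*,x)$ is strictly suboptimal, which follows from the same monotonicity since $\Lambda_1>0$ on $(A_1^*,\infty)$.
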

\begin{proof}
The result is immediate when $A_2^* = \infty$ and hence we assume $A_2^* < \infty$.

Suppose Assumption \ref{assump_h_multi} holds, and in order to derive a contradiction, we suppose there exists some $\hat{x} > A_2^*$ at which it is optimal to stop.  Under this assumption, the value function must satisfy
\begin{align}
 U^{(2)}(\hat{x}) = g_1(\hat{x}) +  \sup_{\tau \in \S} u(\hat{x}, \tau; f_2, g_2) = g_1(\hat{x}) +  u_{A_2^*}^{\{2\}}(\hat{x}). \label{J_contradiction}
\end{align}
We shall show that this is in fact smaller than $u_{A_2^*}^{\{1\}}(\hat{x}) + u_{A_2^*}^{\{2\}}(\hat{x})$, which is the value obtained by $(\tau^{(1)}, \tau^{(2)}) = (\tau_{A_2^*}, \tau_{A_2^*}) \in \ST_2 \subset  \S_2$. By \eqref{cond_continuous_fit} and \eqref{u_A_derivative},
\begin{align}
&\Lambda_1(A)\leq (\geq) 0 \Longrightarrow \frac \partial {\partial A} u_A^{\{1\}}(\hat{x})  \geq (\leq) 0, \quad  \forall A < \hat{x}, \label{equivalence_derivative_Phi} \\
&u_A^{\{1\}}(A+) = g_1(A) +  W^{(r)}(0) \Lambda_1(A). \label{cont_fit_difference}
\end{align}
By \eqref{equivalence_derivative_Phi}-\eqref{cont_fit_difference} and because $A_1^* \leq A_2^* < \hat{x}$ and $\Lambda_1$ is increasing,
\begin{align*}
u_{A^*_2}^{\{1\}}(\hat{x}) > \lim_{A \uparrow \hat{x}}u_A^{\{1\}}(\hat{x}) = g_1(\hat{x}) + W^{(r)}(0) \Lambda_1(\hat{x}) \geq g_1(\hat{x}).
\end{align*}
Regarding the last inequality, for the unbounded variation case, it holds because $W^{(r)}(0)=0$ by Remark \ref{remark_smoothness_zero} (2).  For the bounded variation case, it also holds because \eqref{equivalence_derivative_Phi} and $\hat{x} > A_1^*$ imply $\Lambda_1(\hat{x}) > 0$.  Therefore, we get, by \eqref{J_contradiction}, $U^{(2)}(\hat{x}) < u_{A_2^*}^{\{1\}}(\hat{x}) + u_{A_2^*}^{\{2\}}(\hat{x})$ leading to a contradiction.  Because $\hat{x}$ is arbitrary on $(A_2^*, \infty)$, we have the claim.  The same contradiction can be derived under Assumption \ref{assump_h_multi2} because $(\tau_{A_2^*},\tau_{A_2^*}) \in \ST_2$.
\end{proof}

The following proposition suggests under $-\infty \leq A_1^* \leq A_2^* \leq \infty$ that the optimal strategy is the simultaneous stoppings corresponding to the threshold level $A^*_{\{1,2\}}$, which is the value that makes $\Lambda_{\{1,2\}} \equiv \Lambda_1 + \Lambda_2$ as in \eqref{Lambda_II} vanish.

\begin{proposition} \label{case_two_second} Suppose $-\infty \leq A_1^* \leq A_2^* \leq \infty$. 
\begin{enumerate}
\item We have $A_1^* \leq  A^*_{\{1,2\}} \leq A_2^*$.
\item It is optimal to stop simultaneously and the value function is given by $U^{(2)}(x) = u_{A^*_{\{1,2\}}}^{\{1,2\}}(x)$ under Assumption  \ref{assump_h_multi} and $\widetilde{U}^{(2)}(x) = u_{A^*_{\{1,2\}}}^{\{1,2\}}(x)$ under Assumption  \ref{assump_h_multi2}.
\end{enumerate}
\end{proposition}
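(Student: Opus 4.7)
\emph{Part (1).} The key is the additivity $\Lambda_{\{1,2\}} = \Lambda_1 + \Lambda_2$ from \eqref{Lambda_II}. Each $\Lambda_m$ is monotonically increasing, so $\Lambda_2(A_1^*) \leq \Lambda_2(A_2^*) = 0$ (since $A_1^* \leq A_2^*$); combined with $\Lambda_1(A_1^*) = 0$ this gives $\Lambda_{\{1,2\}}(A_1^*) \leq 0$. Symmetrically $\Lambda_{\{1,2\}}(A_2^*) \geq 0$. By continuity and monotonicity, the unique zero $A^*_{\{1,2\}}$ of $\Lambda_{\{1,2\}}$ must lie in $[A_1^*, A_2^*]$. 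The boundary cases $A_1^* = -\infty$ or $A_2^* = +\infty$ follow identically from the limit conditions defining $A^*_{\II}$.

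\emph{Part (2), lower bound.} Take the candidate $\tau^{(1)} = \tau^{(2)} = \tau_{A^*_{\{1,2\}}}$; it lies in $\S_2$ trivially and in $\ST_2$ with $A_1 = A_2 = A^*_{\{1,2\}}$. The middle integral $\int_{\tau^{(1)}}^{\tau^{(2)}} e^{-rt} F_2(X_t)\diff t$ vanishes, so the objective collapses to $u(x, \tau_{A^*_{\{1,2\}}}; F_1, g_1+g_2) = u(x, \tau_{A^*_{\{1,2\}}}; f_{\{1,2\}}, g_{\{1,2\}}) = u_{A^*_{\{1,2\}}}^{\{1,2\}}(x)$ by Corollary \ref{corollary_w} applied with $\II = \{1,2\}$. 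This yields $U^{(2)}(x) \geq u_{A^*_{\{1,2\}}}^{\{1,2\}}(x)$ and likewise $\widetilde{U}^{(2)}(x) \geq u_{A^*_{\{1,2\}}}^{\{1,2\}}(x)$.

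\emph{Part (2), upper bound.} By the strong Markov property at $\tau^{(1)}$ and standard dynamic programming, $U^{(2)}(x) = \sup_{\tau^{(1)} \in \S} u(x, \tau^{(1)}; F_1, g_1 + u^{\{2\}})$, where $u^{\{2\}}$ denotes the one-stage value from Corollary \ref{corollary_w} with $\II = \{2\}$. Lemma \ref{lemma_region} ensures that the optimizer $\tau^{*(1)}$ of this outer one-stage problem satisfies $X_{\tau^{*(1)}} \leq A_2^*$ a.s., and on $(-\infty, A_2^*]$ one has $u^{\{2\}} \equiv g_2$. Substitution then gives $U^{(2)}(x) = u(x, \tau^{*(1)}; F_1, g_1 + g_2) \leq \sup_{\tau \in \S} u(x, \tau; f_{\{1,2\}}, g_{\{1,2\}}) = u_{A^*_{\{1,2\}}}^{\{1,2\}}(x)$ by Corollary \ref{corollary_w}. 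The same argument restricted to $\ST_2$ settles $\widetilde{U}^{(2)}$ under Assumption \ref{assump_h_multi2}.

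The main obstacle is this strong-Markov reduction together with the careful invocation of Lemma \ref{lemma_region} to collapse $u^{\{2\}}$ to $g_2$ on the first-stage stopping region: one must handle the convention $\tau^{(2)} = \infty$ consistently and, if no exact outer optimizer exists, pass to an approximating sequence. Once these technicalities are dispatched, Corollary \ref{corollary_w} closes both the upper and lower bounds.
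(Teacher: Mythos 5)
Your proposal is correct and follows essentially the same route as the paper: additivity and monotonicity of $\Lambda_{\{1,2\}}=\Lambda_1+\Lambda_2$ for part (1), and for part (2) the strong-Markov reduction to a one-stage problem with terminal payoff $g_1+u^{\{2\}}_{A_2^*}$, Lemma \ref{lemma_region} to confine the first stopping to $(-\infty,A_2^*]$ where $u^{\{2\}}_{A_2^*}\equiv g_2$, and Corollary \ref{corollary_w} to close both bounds. The only cosmetic difference is that the paper works purely with inequalities (its \eqref{inequality_sup} gives only ``$\leq$''), thereby sidestepping the existence-of-an-exact-optimizer issue you flag at the end.
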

\begin{proof}
(1)   Because both $\Lambda_1$ and $\Lambda_2$ are increasing, $\Lambda_{\{1,2\}}$ is increasing as well.  Because $A_1^* \leq A_2^*$, we have $\Lambda_1(A_2^*) \geq 0$ and hence $\Lambda_{\{1,2\}}(A_2^*) \geq 0$.  Similarly, $\Lambda_{\{1,2\}}(A_1^*) \leq 0$.   The increasing property of $\Lambda_{\{1,2\}}$ now shows the claim.

(2)  
Under Assumption  \ref{assump_h_multi},  for any pair of stopping times $(\tau^{(1)}, \tau^{(2)}) \in \S_2$, because $F_2 = f_2$ as in \eqref{F_relation_M_2} and by the strong Markov property of the \lev process $X$,
\begin{align} \label{inequality_sup}
\begin{split}
&\E^x \Big[ \sum_{m=1,2} e^{-r \tau^{(m)}}
g_m(X_{\tau^{(m)}}) 1_{\{\tau^{(m)} < \infty \}} + \int_0^{\tau^{(1)}} e^{-rt}  F_1(X_t) \diff t +  \int_{\tau^{(1)}}^{\tau^{(2)}}e^{-rt} F_2(X_t)  \diff
t \Big]\\
&\leq \E^x \Big[  e^{-r \tau^{(1)}}
g_1(X_{\tau^{(1)}}) 1_{\{\tau^{(1)} < \infty \}} + \int_0^{\tau^{(1)}} e^{-rt}  F_1(X_t) \diff t   +  1_{\{ \tau^{(1)} < \infty \}} e^{- r \tau^{(1)}} \sup_{\tau \in \S}u(X_{\tau^{(1)}}, \tau; f_2, g_2)  \Big]\\
&= u(x, \tau^{(1)}; F_1, g_1 + u_{A^*_2}^{\{2\}}).
\end{split}
\end{align}

Similarly, under Assumption  \ref{assump_h_multi2}, \eqref{inequality_sup} also holds for any $(\tau^{(1)}, \tau^{(2)}) \in \ST_2$ by replacing $\S$ with $\ST$ in the expectation of the third term.
This together with Lemma \ref{lemma_region} shows that $U^{(2)}(x)$ (resp.\ $\widetilde{U}^{(2)}(x)$) is less than or equal to
\begin{align}
\sup_{\tau \in \mathcal{S}(A_2^*)} u(x, \tau; F_1, g_1 + u_{A^*_2}^{\{2\}}) \label{exp_sup_A2}
\end{align}
under Assumption  \ref{assump_h_multi} (resp.\ Assumption  \ref{assump_h_multi2})
where $\mathcal{S}(A_2^*)$ is the set of $\tau \in \S$ (resp.\ $\tau \in \ST$) such that $X_{\tau} \in (-\infty, A_2^*]$ a.s.\ on $\{\tau < \infty\}$.  Because, for any $\tau \in \mathcal{S}(A_2^*)$, $u_{A^*_2}^{\{2\}} (X_\tau) = g_2(X_{\tau})$ a.s.\ on  $\{\tau < \infty\}$ and because $F_1  = f_{\{1,2\}}$ as in \eqref{F_relation_M_2},
 \eqref{exp_sup_A2} equals
\begin{align*}
\sup_{\tau \in \mathcal{S}(A_2^*)} u(x, \tau; f_{\{1,2\}}, g_{\{1,2\}}) 
\leq \sup_{\tau \in \mathcal{S}} u(x, \tau; f_{\{1,2\}}, g_{\{1,2\}})  = u_{A^*_{\{1,2\}}}^{\{1,2\}}(x),
\end{align*}
by Corollary \ref{corollary_w} and because $\S(A_2^*) \subset \S$. Namely, $U^{(2)}(x) \leq u_{A^*_{\{1,2\}}}^{\{1,2\}}(x)$ under Assumption \ref{assump_h_multi} and $\widetilde{U}^{(2)}(x) \leq u_{A^*_{\{1,2\}}}^{\{1,2\}}(x)$ under Assumption \ref{assump_h_multi2}. These in fact hold with equality because $u_{A^*_{\{1,2\}}}^{\{1,2\}}(x)$ is attained by $(\tau_{A^*_{\{1,2\}}}, \tau_{A^*_{\{1,2\}}}) \in \ST_2 \subset \S_2$.

\end{proof}

\subsection{Multiple-stage problem} \label{subsection_multi_stage}
We now generalize the results to the multiple-stage case and solve \eqref{multi_stage_objective}-\eqref{multi_stage_objective_tilde} or equivalently \eqref{multi_stage_objective2}-\eqref{multi_stage_objective2_tilde} with $M \geq 3$.  For $1 \leq m \leq M$, let
\begin{align}
U_m^{(M)}(x) := \sup_{(\tau^{(m)}, \ldots, \tau^{(M)}) \in \S_{M-m+1}} \sum_{k=m}^M u(x, \tau^{(k)}; f_k, g_k), \label{multi_stage_objective_M} \\
\widetilde{U}_m^{(M)}(x) := \sup_{(\tau^{(m)} , \ldots,\tau^{(M)}) \in \ST_{M-m+1}} \sum_{k=m}^M u(x, \tau^{(k)}; f_k, g_k). \label{multi_stage_objective_M_tilde}
\end{align}
In particular, $U^{(M)} \equiv U_1^{(M)}$ and $\widetilde{U}^{(M)} \equiv \widetilde{U}_1^{(M)}$, and by Corollary \ref{corollary_w}
\begin{align}
U_M^{(M)}(x) = \sup_{\tau \in \S} u(x, \tau; f_M, g_M)  = u^{\{M\}}_{A^*_M}(x) \quad \textrm{and} \quad \widetilde{U}_M^{(M)}(x) = \sup_{\tau \in \ST} u(x, \tau; f_M, g_M)  = u^{\{M\}}_{A^*_M}(x), \label{multi_stage_objective_M_last}
\end{align}
under Assumptions  \ref{assump_h_multi} and  \ref{assump_h_multi2}, respectively.
The expressions for $U_{M-1}^{(M)}$ and $\widetilde{U}_{M-1}^{(M)}$ can also be obtained as in the two-stage case.

Given $1 \leq m \leq M$, let us partition $\{ m, m+1, \ldots, M \}$ to an $L(m)$ number of (non-empty) disjoint sets $\II_m := \{ \II(k; m), 1 \leq k \leq L(m) \}$ such that
\begin{align*}
\{ m, m+1, \ldots, M \} = \II(1; m)  \cup  \cdots \cup  \II(L(m); m)
\end{align*}
where, if $L(m)=1$, $\II(1;m)=\{ m, \ldots, M\}$ and, if $L(m) \geq 2$,
\begin{align*}
\II(1;m) &:= \{ m, \ldots, n_{1,m}-1  \}, \\
\II(l; m) &:= \{ n_{l-1,m}, \ldots, n_{l,m}-1  \}, \quad 2 \leq l \leq L(m)-1, \\
\II(L(m); m) &:= \{ n_{L(m)-1,m}, \ldots, M  \},
\end{align*}
for some integers $m < n_{1,m} < \cdots < n_{L(m)-1,m} < M$.  We
consider the strategy such that, if $k$ and $l$ are in the same set, then the $k$-th and $l$-th stops occur simultaneously a.s.

We shall show that \eqref{multi_stage_objective_M} and \eqref{multi_stage_objective_M_tilde}, for any $1 \leq m \leq M$, can be solved by  a strategy with some partition $\II_m^* := \{ \II^*(k; m), 1 \leq k \leq L^*(m) \}$ satisfying 
\begin{align*}
A^*_{\II^*(1;m)} > \cdots > A^*_{\II^*(L^*(m); m)},
\end{align*}
where $A^*_{\II}$ is defined as in \eqref{Lambda_II} for any set $\II$.  The corresponding expected value becomes
\begin{align} \label{J_I}
\begin{split}
U_{m, \II_m^*}^{(M)} (x) :=  \sum_{k=1}^{L^*(m)} u(x, \tau_{A^*_{\II^*(k; m)}}; f_{\II^*(k;m)},  g_{\II^*(k;m)}) 
= \sum_{k=1}^{L^*(m)}  u_{A^*_{\II^*(k;m)}}^{\II^*(k;m)}(x),
\end{split}
\end{align}
whose strategy is given by
for any $m \leq n \leq M$,
\begin{align*}
\tau^{*(n)} &= \tau_{A^*_{\II^*(k;m)}} \quad \textrm{ for the unique $1 \leq k \leq L^*(m)$ such that $n \in \II^*(k;m)$}.
\end{align*}

We shall show that \eqref{J_I} is optimal, i.e.\ $U_m^{(M)} = U_{m, \II_m^*}^{(M)}$ under Assumption \ref{assump_h_multi} and $\widetilde{U}_m^{(M)} = U_{m, \II_m^*}^{(M)}$ under Assumption \ref{assump_h_multi2} for any $1 \leq m \leq M$.  Moreover, $\II_m^*$ can be obtained inductively moving backwards starting from $\II^*_M$ such that $L^*(M) = 1$ and $\II^*(1; M) = \{ M \}$.
For the inductive step, the following algorithm outputs $\II^*_{m-1}$ from $\II^*_{m}$ for any $2 \leq m \leq M$.  By repeating this, we can obtain the partition $\II_1^*$; the resulting $U_{1, \II^*_1}^{(M)}$ as in \eqref{J_I} becomes the value function $U^{(M)}=U^{(M)}_1$ ($\widetilde{U}^{(M)}=U^{(M)}_1$).

%
%
%
%

\begin{center}
\line(1,0){400}
\end{center}

 \textbf{Algorithm $\II^*_{m-1}$ = Update($\II^*_{m},m$)}
\begin{description}
\item[Step 1] Set $i=1$.
\item[Step 2]  Set 
\begin{align*}
\hat{\II} := \left\{ \begin{array}{ll} \{m-1\}, & i = 1, \\ \{m-1\} \cup \II^*(1;m) \cup \cdots \cup \II^*(i-1;m), & i \geq 2. \end{array} \right. 
\end{align*}
\item[Step 3]Compute $A^*_{\hat{\II}}$ and
\begin{enumerate}
\item if $i = L^*(m)+1$, then \textbf{stop and return} $\II^*_{m-1} = \{ \II^*(1; m-1) \}$ with $L^*(m-1) = 1$ and $\II^*(1; m-1) = \{m-1, \ldots, M\}$;
\item if $A^*_{\hat{\II}} > A^*_{\II^*(i; m)}$, then \textbf{stop and return} $\II^*_{m-1} = \{ \II^*(k; m-1), 1 \leq k \leq L^*(m-1)\}$ with $L^*(m-1) = L^*(m) - i+2$ and
\begin{align} \label{I_update}
\II^*(1; m-1) = \hat{\II} \quad \textrm{and} \quad \II^*(l; m-1) = \II^*(l+i-2; m), \quad 2 \leq l \leq L^*(m-1);
\end{align}

\item if $A^*_{\hat{\II}}\leq A^*_{\II^*(i;m)}$, set $i = i+1$ and go back to \textbf{Step 2}.
\end{enumerate}
\end{description}
\begin{center}
\line(1,0){400}
\end{center}

The role of the algorithm is in words to extend from $n(=M-m+1)$-stage problem to $n+1(=M-m+2)$-stage problem.  The idea is similar to what we discussed in the previous section on how to extend from a one-stage problem to a two-stage problem.  When a new initial stage is added, the corresponding threshold value $A^*_{\hat{\II}}$ is first calculated.  Depending on whether its value is higher than that of the subsequent stages or not, simultaneous stoppings may become optimal.  For $n$ larger than two, we must solve it recursively by keeping updating the set $\hat{\II}$, or the set of the first (simultaneous) stoppings, as given in this algorithm.   If $A^*_{\hat{\II}}$ is low, the strategy of the new initial stage may naturally depend on the strategies of all the subsequent stages.  Unlike the extension to the two-stage problem which only needs to take into account the strategy of the  stage immediately next, it needs to reflect the strategies of all subsequent stages.

We prove the following under Assumption \ref{assump_h_multi} for the optimality \eqref{multi_stage_objective}.  As is already clear after the detailed discussion on the two-stage case, only a slight modification is needed for \eqref{multi_stage_objective_tilde} under Assumption \ref{assump_h_multi2}.

\begin{lemma}  \label{lemma_inductive_step}
In view of the algorithm above, suppose Assumption \ref{assump_h_multi} and fix $2 \leq m \leq M$. Given that $\II^*_m$ satisfies, for every $1 \leq l \leq L^*(m)$,
\begin{align}  \label{J_min_inductive_base}
U_{\min \mathcal{I}^*(l; m)}^{(M)}(x) = \sum_{k=l}^{L^*(m)} u_{A^*_{\II^*(k; m)}}^{\II^*(k; m)}(x),
\end{align}
and is used as an input in the algorithm.  Then, 
we have the following.
\begin{enumerate}
\item At the end of \textbf{Step 2}, if $1 \leq i \leq L^*(m)$,  
\begin{align}
U_{m-1}^{(M)}(x)  \leq\sup_{\tau \in \S}  \Big[ u(x, \tau; f_{\hat{\II}}, g_{\hat{\II}}) +  \sum_{k=i}^{L^*(m)} u(x, \vartheta_\tau(A^*_{\II^*(k; m)}); f_{\II^*(k; m)}, g_{\II^*(k; m)})  \Big] 
\label{J_m-1_middle}
\end{align}
where $\vartheta_\nu(A) := \nu + \tau_A \circ \theta_\nu$ for any $\nu \in \S$ and $A \in \R$ with the time-shift operator $\theta_t$, 
and if $i = L^*(m)+1$
\begin{align*}
U_{m-1}^{(M)}(x)  =\sup_{\tau \in \S}  u(x, \tau; f_{\hat{\II}},  g_{\hat{\II}}) = u^{\{m-1,\ldots, M\}}_{A^*_{\{m-1,\ldots,M\}}}(x).
\end{align*}
\item Let $\II^*_{m-1}$ be produced by the algorithm.  For any $1 \leq l \leq L^*(m-1)$,
\begin{align} \label{J_min_inductive}
U_{\min \mathcal{I}^*(l; m-1)}^{(M)}(x)   = \sum_{k=l}^{L^*(m-1)} u_{A^*_{\II^*(k; m-1)}}^{\II^*(k; m-1)}(x).  
\end{align}
\end{enumerate}
\end{lemma}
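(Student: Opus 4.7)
The plan is to prove (1) by induction on the iteration index $i$ of the algorithm, with the base case $i=1$ giving \eqref{J_m-1_middle} as an equality and the inductive step under Step 3(c) showing the right-hand side is preserved as $i$ advances. For the base case, applying the strong Markov property at $\tau^{(m-1)}$ in \eqref{multi_stage_objective_M} yields
\begin{equation*}
U_{m-1}^{(M)}(x) = \sup_{\tau \in \S} \E^x \Bigl[ \int_0^\tau e^{-rt} F_{m-1}(X_t) \diff t + e^{-r\tau}\bigl(g_{m-1}(X_\tau) + U_m^{(M)}(X_\tau)\bigr) \I{\tau < \infty} \Bigr];
\end{equation*}
substituting \eqref{J_min_inductive_base} for $U_m^{(M)}(X_\tau)$ and applying the strong Markov property a second time to each $u^{\II^*(k;m)}_{A^*_{\II^*(k;m)}}(X_\tau)$ converts it into $u(x, \vartheta_\tau(A^*_{\II^*(k;m)}); f_{\II^*(k;m)}, g_{\II^*(k;m)})$ minus $\E^x[\int_0^\tau e^{-rt} f_{\II^*(k;m)}(X_t) \diff t]$. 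Summing over $k$, the telescoping identity $\sum_{k=1}^{L^*(m)} f_{\II^*(k;m)} = F_m$ (from $f_\II = F_{\min \II} - F_{\max \II+1}$ and $F_{M+1}=0$) combined with $F_{m-1} - F_m = f_{m-1} = f_{\hat{\II}}$ for $\hat{\II} = \{m-1\}$ assembles the right-hand side of \eqref{J_m-1_middle} at $i=1$ as an equality.

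For the inductive step from $i$ to $i+1$ under Step 3(c), i.e.\ $A^*_{\hat{\II}} \leq A^*_{\II^*(i;m)}$, I will show that the right-hand side of \eqref{J_m-1_middle} at iteration $i$ is bounded above by that at iteration $i+1$, which combined with $U_{m-1}^{(M)}(x)$ being bounded above by the iteration-$i$ right-hand side delivers the claim at iteration $i+1$. Fix any $\tau \in \S$ and set $\tilde\tau := \vartheta_\tau(A^*_{\II^*(i;m)}) \geq \tau$. Because $A^*_{\II^*(k;m)} < A^*_{\II^*(i;m)}$ for all $k > i$ and $X$ remains above $A^*_{\II^*(i;m)}$ throughout $[\tau, \tilde\tau)$, the identity $\vartheta_\tau(A^*_{\II^*(k;m)}) = \vartheta_{\tilde\tau}(A^*_{\II^*(k;m)})$ holds for every $k \geq i$; thus only the term $u(x,\tau; f_{\hat{\II}}, g_{\hat{\II}})$ is affected by the substitution $\tau \mapsto \tilde\tau$, and the strong Markov property gives
\begin{equation*}
u(x,\tilde\tau; f_{\hat{\II}}, g_{\hat{\II}}) - u(x,\tau; f_{\hat{\II}}, g_{\hat{\II}}) = \E^x\bigl[\I{\tau<\infty} e^{-r\tau}\bigl(u^{\hat{\II}}_{A^*_{\II^*(i;m)}}(X_\tau) - g_{\hat{\II}}(X_\tau)\bigr)\bigr].
\end{equation*}
Adapting the contradiction argument of Lemma \ref{lemma_region} to the payoff pair $(f_{\hat{\II}}, g_{\hat{\II}})$ via \eqref{cond_continuous_fit}--\eqref{u_A_derivative} together with the strict positivity of $\Lambda_{\hat{\II}}$ on $(A^*_{\hat{\II}}, \infty) \supset [A^*_{\II^*(i;m)}, \infty)$, this right-hand side is nonnegative and strictly positive on $\{X_\tau > A^*_{\II^*(i;m)}\}$. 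Consequently the supremum at iteration $i$ coincides with the one restricted to the class of $\tau \in \S$ such that $X_\tau \leq A^*_{\II^*(i;m)}$ a.s.\ on $\{\tau<\infty\}$; on this class $\vartheta_\tau(A^*_{\II^*(i;m)}) = \tau$, so $u(x,\tau; f_{\hat{\II}}, g_{\hat{\II}}) + u(x,\tau; f_{\II^*(i;m)}, g_{\II^*(i;m)}) = u(x,\tau; f_{\hat{\II} \cup \II^*(i;m)}, g_{\hat{\II} \cup \II^*(i;m)})$ by additivity of $f_\cdot$ and $g_\cdot$ on consecutive intervals, and relaxing the restriction produces the iteration-$(i{+}1)$ right-hand side.

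In the termination branches, for Step 3(a) with $i=L^*(m)+1$, $\hat{\II}=\{m-1,\ldots,M\}$ and the sum in \eqref{J_m-1_middle} is empty, so the right-hand side equals $\sup_\tau u(x,\tau; f_{\hat{\II}}, g_{\hat{\II}}) = u^{\{m-1,\ldots,M\}}_{A^*_{\{m-1,\ldots,M\}}}(x)$ by Corollary \ref{corollary_w}; for Step 3(b) with $A^*_{\hat{\II}} > A^*_{\II^*(i;m)}$, the choice $\tau = \tau_{A^*_{\hat{\II}}}$ makes each $\vartheta_\tau(A^*_{\II^*(k;m)}) = \tau_{A^*_{\II^*(k;m)}}$ for $k \geq i$ (since $A^*_{\II^*(k;m)} \leq A^*_{\II^*(i;m)} < A^*_{\hat{\II}}$), and Corollary \ref{corollary_w} separately maximizes each summand to give $u^{\hat{\II}}_{A^*_{\hat{\II}}}(x) + \sum_{k=i}^{L^*(m)} u^{\II^*(k;m)}_{A^*_{\II^*(k;m)}}(x)$. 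Part (2) is then bookkeeping from \eqref{I_update}: for $l=1$, $U_{m-1}^{(M)}(x)$ from the termination analysis matches $\sum_{k=1}^{L^*(m-1)} u^{\II^*(k;m-1)}_{A^*_{\II^*(k;m-1)}}(x)$ via $\II^*(1;m-1) = \hat{\II}$ and $\II^*(l;m-1) = \II^*(l+i-2;m)$ for $l \geq 2$; for $l \geq 2$, $\min \II^*(l;m-1) = \min \II^*(l+i-2;m)$ reduces \eqref{J_min_inductive} to \eqref{J_min_inductive_base} applied to $\II^*_m$. The principal technical obstacle is the Lemma \ref{lemma_region}-style adaptation to the aggregated payoff pair $(f_{\hat{\II}}, g_{\hat{\II}})$: one must verify that $\Lambda_{\hat{\II}} = \sum_{j \in \hat{\II}} \Lambda_j$ is strictly positive on $(A^*_{\hat{\II}}, \infty)$, treating the bounded and unbounded variation cases separately as in \cite{Egami-Yamazaki-2011}.
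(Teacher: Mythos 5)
Your proof follows essentially the same route as the paper's: induction on the algorithm's iteration index $i$, with the base case obtained from the strong Markov property together with the hypothesis \eqref{J_min_inductive_base}, and the inductive step carried out by showing the objective cannot decrease when $\tau$ is replaced by $\vartheta_\tau(A^*_{\II^*(i;m)})$ — the paper phrases this as ``it is never optimal to stop on $(A^*_{\II^*(i;m)},\infty)$,'' but it is the same Lemma \ref{lemma_region}-type argument resting on $\Lambda_{\hat{\II}}\geq 0$ above $A^*_{\hat{\II}}$ — followed by merging $\hat{\II}$ with $\II^*(i;m)$ on the restricted class and relaxing. Your termination branch 3(b) is organized slightly differently: you maximize each summand separately and observe that $\tau_{A^*_{\hat{\II}}}$ attains all the maxima simultaneously, whereas the paper bounds the tail sum by $U^{(M)}_{\min\II^*(i;m)}(x)$ via feasibility and then invokes \eqref{J_min_inductive_base}; both yield the same bound. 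The one step you leave implicit is the reverse inequality needed to upgrade the upper bounds into the equalities asserted in \eqref{J_min_inductive}: one must note that the value $u^{\hat{\II}}_{A^*_{\hat{\II}}}(x)+\sum_{k= i}^{L^*(m)}u^{\II^*(k;m)}_{A^*_{\II^*(k;m)}}(x)$ is attained by the feasible multi-stage strategy defined by $\II^*_{m-1}$ (the thresholds produced by the algorithm are strictly decreasing, so the corresponding down-crossing times are ordered and lie in $\S_{M-m+2}$), hence it is also a lower bound for $U^{(M)}_{m-1}(x)$; the same remark is needed in branch 3(a). With that sentence added, the argument is complete.
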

\begin{proof}
(1) We shall proceed by mathematical induction.

(Base-step) Suppose $i = 1$.  By our assumption \eqref{J_min_inductive_base} and by an argument similar to \eqref{inequality_sup}, 
\begin{align*}
U_{m-1}^{(M)}(x)  &\leq \sup_{\tau \in \S}  \E^x \Big[ e^{-r \tau}
g_{m-1} (X_{\tau})  1_{\{\tau < \infty \}}  +   \int_0^{\tau} e^{-rt}  F_{m-1}(X_t) \diff t + e^{-r \tau} U_m^{(M)}(X_\tau)  1_{\{ \tau < \infty \}} \Big] \\
&= \sup_{\tau \in \S}  \Big[ u(x, \tau; f_{m-1}, g_{m-1}) +  \sum_{k=1}^{L^*(m)} u(x, \vartheta_\tau(A^*_{\II^*(k; m)}); f_{\II^*(k; m)}, g_{\II^*(k; m)})  \Big]. 
\end{align*}
Now  for $i=1$ \eqref{J_m-1_middle} holds because $\hat{\II} = \{m-1\}$; this becomes the base case.

(Inductive-step) Now we assume  \eqref{J_m-1_middle} for $i=j \leq L^*(m)-1$, i.e., $\hat{\II} = \{m-1\} \cup \II^*(1;m) \cup \cdots \cup \II^*(j-1;m)$ and
\begin{align}
U_{m-1}^{(M)}(x)  \leq\sup_{\tau \in \S}  \Big[ u(x, \tau; f_{\hat{\II}}, g_{\hat{\II}}) +  \sum_{k=j}^{L^*(m)} u(x, \vartheta_\tau(A^*_{\II^*(k; m)}); f_{\II^*(k; m)}, g_{\II^*(k; m)})  \Big], \label{J_m-1_middle_j}
\end{align}
 and show that it will hold for $i=j+1$.



Because when $A^*_{\hat{\II}}  > A^*_{\mathcal{I}^*(j;m)}$ the algorithm stops at $j$ and never returns to \textbf{Step 2}, we suppose here that $A^*_{\hat{\II}}  \leq A^*_{\mathcal{I}^*(j;m)}$.  
In view of the right-hand side of \eqref{J_m-1_middle_j}, if there exists some $\hat{x} > A^*_{\mathcal{I}^*(j;m)}$ at which it is optimal to stop, then the value function becomes $g_{\hat{\II}}(\hat{x}) + U_{\min \II^*(j;m)}^{(M)}(\hat{x})$ by our assumption \eqref{J_min_inductive_base}.  Using the same reasoning as in Lemma \ref{lemma_region}, this is in fact smaller than $u_{A^*_{\II^*(j;m)}}^{\hat{\II}}(\hat{x})+ U_{\min \II^*(j;m)}^{(M)}(\hat{x})$. 
 Hence it is never optimal to stop on $(A^*_{\mathcal{I}^*(j;m)}, \infty)$ for the optimization problem on the right-hand side of \eqref{J_m-1_middle_j} (see also the proof of Proposition \ref{case_two_second}).



Now let $\mathcal{S}(A^*_{\II^*(j; m)})$ be the set of all stopping times at which $X \in (-\infty, A^*_{\II^*(j; m)}]$ a.s.
For all $\tau \in \mathcal{S}(A^*_{\II^*(j; m)})$, we have $\tau = \vartheta_\tau (A^*_{\II^*(j;m)}) $ a.s.\ and hence $u(x, \vartheta_\tau(A^*_{\II^*(j; m)}); f_{\II^*(j; m)}, g_{\II^*(j; m)}) = u(x, \tau; f_{\II^*(j; m)}, g_{\II^*(j; m)})$.  Therefore \eqref{J_m-1_middle_j} implies
\begin{align*}
U_{m-1}^{(M)}(x)  &\leq\sup_{\tau \in \mathcal{S}(A^*_{\II^*(j; m)})}  \Big[ u(x, \tau; f_{\hat{\II} \cup \II^*(j; m)}, g_{\hat{\II} \cup \II^*(j; m)}) +  \sum_{k=j+1}^{L^*(m)} u(x, \vartheta_\tau(A^*_{\II^*(k; m)}); f_{\II^*(k; m)}, g_{\II^*(k; m)})  \Big]  \\
&\leq\sup_{\tau \in \mathcal{S}}  \Big[ u(x, \tau; f_{\hat{\II} \cup \II^*(j; m)}, g_{\hat{\II} \cup \II^*(j; m)}) +  \sum_{k=j+1}^{L^*(m)} u(x, \vartheta_\tau(A^*_{\II^*(k; m)}); f_{\II^*(k; m)}, g_{\II^*(k; m)})  \Big]. 
\end{align*}
Hence, \eqref{J_m-1_middle} holds for $i=j+1$, as desired. This proves (1) by mathematical induction.

(2) When the algorithm stops,  it is either (i) $i = L^*(m)+1$ at \textbf{Step 3}(1) or (ii) $A^*_{\hat{\II}} > A^*_{\mathcal{I}^*(i;m)}$ at \textbf{Step 3}(2).  \\
(i) Suppose $i = L^*(m)+1$.  In this case, $\hat{\II} = \{m-1, \ldots, M\}$ and, by \eqref{J_m-1_middle}, 
\begin{align*}
U_{\min \mathcal{I}^*(1; m-1)}^{(M)}(x)  = U_{m-1}^{(M)} (x) \leq \sup_{\tau \in \S} u(x, \tau; f_{\{ m-1, \ldots, M\}}, g_{\{ m-1, \ldots, M\}}) = u^{\{m-1,\ldots, M\}}_{A^*_{\{m-1,\ldots,M\}}}(x),
\end{align*}
which in fact holds by equality because the right-hand side is attained by $(\tau_{A^*_{\{m-1,\ldots,M\}}}, \ldots, \tau_{A^*_{\{m-1,\ldots,M\}}}) \in \ST_{M-m+2} \subset \S_{M-m+2}$. \\
(ii)  Suppose the algorithm exits at $i$ with $A^*_{\hat{\II}}> A^*_{\mathcal{I}^*(i;m)}$.  
By  \eqref{J_m-1_middle}, we have
\begin{align*}
U_{m-1}^{(M)}(x)  
&\leq  \sup_{\tau \in \S}  u(x,\tau; f_{\hat{\II}},g_{\hat{\II}})  + \sup_{\tau \in \S}\sum_{k=i}^{L^*(m)} u(x, \vartheta_\tau(A^*_{\II^*(k; m)});  f_{\II^*(k;m)}, g_{\II^*(k,m)}).
\end{align*}
Regarding the second supremum of the right-hand side, the strategy $\{ \tau^{(l)}; \min \II^*(i,m) \leq l \leq M\}$, defined by $\tau^{(l)} = \vartheta_\tau(A^*_{\II^*(k; m)})$ for the unique $i \leq k \leq L^*(m)$ such that  $l \in \II^*(k;m)$, is feasible (or in $\S_{M + 1 - \min \II^*(i,m)}$) for any stopping time $\tau \in \S$ and therefore
\begin{align*}
&\sup_{\tau \in \S}\sum_{k=i}^{L^*(m)} u(x, \vartheta_\tau(A^*_{\II^*(k; m)});  f_{\II^*(k;m)}, g_{\II^*(k,m)}) \\ &\leq
 \sup_{(\tau^{(\min \II^*(i,m))}, \ldots, \tau^{(M)}) \in \S_{M + 1 - \min \II^*(i,m)} \;} \sum_{k=\min \II^*(i; m)}^M u(x, \tau^{(k)}; f_k, g_k)  = U_{\min \II^*(i; m)}^{(M)}(x). 
\end{align*}
Hence, we obtain a bound $U_{m-1}^{(M)}(x)  
\leq u^{\hat{\II}}_{A^*_{\hat{\II}}} (x) + U_{\min \II^*(i;m)}^{(M)}(x)$.  This together with \eqref{I_update} and \eqref{J_min_inductive_base}  shows
\begin{align*}
U_{m-1}^{(M)}(x)  
\leq u^{\hat{\II}}_{A^*_{\hat{\II}}} (x) + \sum_{k=i}^{L^*(m)} u_{A^*_{\II^*(k; m)}}^{\II^*(k; m)}(x) =  \sum_{k=1}^{L^*(m-1)} u_{A^*_{\II^*(k; m-1)}}^{\II^*(k; m-1)}(x).
\end{align*}
This holds by equality because the right-hand side is attained by a feasible strategy defined by $\II^*_{m-1}$.
%
%
This shows \eqref{J_min_inductive} for case $l=1$.

On the other hand, for any $2 \leq l \leq L^*(m-1)$, by \eqref{I_update} and \eqref{J_min_inductive_base},
\begin{align*} 
&U_{\min \mathcal{I}^*(l; m-1)}^{(M)}(x)  = U_{\min \mathcal{I}^*(l+i-2; m)}^{(M)}(x)  = \sum_{k=l+i-2}^{L^*(m)} u_{A^*_{\II^*(k;m)}}^{\II^*(k;m)}(x) = \sum_{k=l}^{L^*(m-1)} u_{A^*_{\II^*(k;m-1)}}^{\II^*(k;m-1)} (x),
\end{align*}
which guarantees \eqref{J_min_inductive}, as desired.

\end{proof}

Using Lemma \ref{lemma_inductive_step} as an inductive step, the main theorem is immediate.  Indeed, \eqref{J_min_inductive_base} holds trivially for $M$ by Corollary \ref{corollary_w}.  By applying the algorithm $M-1$ times, we can obtain \eqref{corollary_w} for $M-1,M-2,\ldots, 1$.

\begin{theorem} \label{theorem_multiple}
 Let $\{ \II^*_m; 1 \leq m \leq M\}$ be produced by the algorithm.  
\begin{enumerate}
\item Under Assumption \ref{assump_h_multi}, for every $1 \leq m \leq M$ and $1 \leq i \leq L^*(m)$,
\begin{align*} 
U_{\min \II^*(i; m)}^{(M)} (x) =  \sum_{k=i}^{L^*(m)} u_{A^*_{\II^*(k;m)}}^{\II^*(k;m)} (x). 
\end{align*}
In particular,
\begin{align} \label{result_multiple1}
U^{(M)}(x) \equiv U_{\min \II^*(1;1)}^{(M)} (x) =  \sum_{k=1}^{L^*(1)} u_{A^*_{\II^*(k;1)}}^{\II^*(k;1)} (x). 
\end{align}
\item Under Assumption \ref{assump_h_multi2}, for every $1 \leq m \leq M$ and $1 \leq i \leq L^*(m)$,
\begin{align*} 
\widetilde{U}_{\min \II^*(i; m)}^{(M)} (x) =  \sum_{k=i}^{L^*(m)} u_{A^*_{\II^*(k;m)}}^{\II^*(k;m)} (x). 
\end{align*}
In particular,
\begin{align} \label{result_multiple2}
\widetilde{U}^{(M)}(x) \equiv \widetilde{U}_{\min \II^*(1;1)}^{(M)} (x) =  \sum_{k=1}^{L^*(1)} u_{A^*_{\II^*(k;1)}}^{\II^*(k;1)} (x). 
\end{align}
\end{enumerate}
\end{theorem}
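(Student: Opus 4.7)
The plan is to prove Theorem \ref{theorem_multiple} by backward induction on $m$, using Lemma \ref{lemma_inductive_step} as the engine of the inductive step. The statement \eqref{J_min_inductive_base} for general $m$ is exactly what the theorem asserts, so the problem reduces to (i) establishing the base case $m = M$ and (ii) verifying that the algorithm's output at stage $m-1$ preserves \eqref{J_min_inductive_base} when it holds at stage $m$.

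For the base case, I take $m = M$. By construction, $\II^*_M$ consists of the single block $\II^*(1;M) = \{M\}$ with $L^*(M) = 1$. Under Assumption \ref{assump_h_multi}, \eqref{multi_stage_objective_M_last} gives $U_M^{(M)}(x) = u^{\{M\}}_{A^*_M}(x) = u^{\II^*(1;M)}_{A^*_{\II^*(1;M)}}(x)$, which is exactly \eqref{J_min_inductive_base} for $m = M$ and $l = 1$. For the inductive step, assume \eqref{J_min_inductive_base} holds at level $m$ for every $1 \leq l \leq L^*(m)$. Then Lemma \ref{lemma_inductive_step}(2) applied to the algorithm's output $\II^*_{m-1} = \textbf{Update}(\II^*_m, m)$ yields \eqref{J_min_inductive}, which is precisely \eqref{J_min_inductive_base} at level $m-1$. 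Iterating this argument from $m = M$ down to $m = 1$ yields, in particular, the case $m = 1$, $i = 1$:
\begin{align*}
U^{(M)}(x) = U^{(M)}_1(x) = U^{(M)}_{\min \II^*(1;1)}(x) = \sum_{k=1}^{L^*(1)} u^{\II^*(k;1)}_{A^*_{\II^*(k;1)}}(x),
\end{align*}
which is \eqref{result_multiple1}.

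For part (2) under Assumption \ref{assump_h_multi2}, the same induction works almost verbatim. The base case uses the $\widetilde{U}$-half of \eqref{multi_stage_objective_M_last}, noting that $\tau_{A^*_M} \in \ST$ is admissible in $\ST_1$. For the inductive step, the proof of Lemma \ref{lemma_inductive_step} needs only the two modifications already flagged in the excerpt: replace the supremum set $\S$ with $\ST$ in the analogue of \eqref{inequality_sup}, and observe that the feasibility checks (such as $(\tau_{A^*_{\{m-1,\ldots,M\}}},\ldots,\tau_{A^*_{\{m-1,\ldots,M\}}}) \in \ST_{M-m+2}$ and, in case (ii), the concatenated strategy produced by $\II^*_{m-1}$) lie in $\ST_{M-m+2}$ because all involved stopping times are down-crossing times with thresholds that are monotonically nonincreasing by the algorithm's termination condition $A^*_{\hat{\II}} > A^*_{\II^*(i;m)}$ and the monotonicity $A^*_{\II^*(1;m)} > \cdots > A^*_{\II^*(L^*(m);m)}$ preserved inductively.

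The only nonroutine point is to confirm that the algorithm's output does produce a valid partition in the sense required, i.e.\ that the resulting thresholds $A^*_{\II^*(1;m-1)} > \cdots > A^*_{\II^*(L^*(m-1);m-1)}$ are strictly decreasing, so that both the concatenated stopping-time vector and its down-crossing restriction are admissible. This is built into the algorithm's termination conditions: the loop exits the first time $A^*_{\hat{\II}} > A^*_{\II^*(i;m)}$, guaranteeing $A^*_{\II^*(1;m-1)} = A^*_{\hat{\II}} > A^*_{\II^*(i;m)} = A^*_{\II^*(2;m-1)}$, while the remaining thresholds inherit their strict ordering from $\II^*_m$ via \eqref{I_update}. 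Once this monotonicity is noted, the remainder of the argument is pure bookkeeping on the identity \eqref{J_I}, and the theorem follows.
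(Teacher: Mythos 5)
Your proof is correct and follows essentially the same route as the paper: the base case $m=M$ via Corollary \ref{corollary_w} (i.e.\ \eqref{multi_stage_objective_M_last}), followed by backward induction using Lemma \ref{lemma_inductive_step}(2) as the inductive step, with the analogous modifications for the $\widetilde{U}$ case. Your additional observation that the algorithm's termination condition guarantees the strict ordering of the thresholds (and hence admissibility in $\ST_{M-m+2}$) is a point the paper leaves implicit, but it does not change the argument.
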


\section{phase-type case and numerical examples}\label{section_numer}

In this section, we consider   spectrally negative \lev processes with i.i.d.\ phase-type jumps and provide numerical examples.  Any \lev process can be approximated  by those with phase-type jumps (phase-type \lev processes); see, e.g., \cite{Feldmann_1998}.  In a related work, Egami and Yamazaki \cite{Egami_Yamazaki_2010_2} approximate the scale function of the spectrally negative \lev process by those of phase-type \lev processes.

\subsection{Spectrally negative \lev processes with phase-type jumps}
Let $X$ be a spectrally negative \lev process of the form
\begin{equation}
  X_t  - X_0=\mu t+\sigma B_t - \sum_{n=1}^{N_t} Z_n, \quad 0\le t <\infty. \label{lev_hyperexp}
\end{equation}
Here $B=\{B_t; t\ge 0\}$ is a standard Brownian motion, $N=\{N_t; t\ge 0\}$ is a Poisson process with arrival rate $\lambda$, and  $Z = \left\{ Z_n; n = 1,2,\ldots \right\}$ is an i.i.d.\ sequence of phase-type-distributed random variables with representation $(m,{\bm \alpha},{\bm T})$; see \cite{Asmussen_2004}.
These processes are assumed mutually independent. The Laplace exponent  \eqref{laplace_exp} of $X$ is then
\begin{align*}
 \psi(s)   = \mu s + \frac 1 2 \sigma^2 s^2 + \kappa \left( {\bm \alpha} (s {\bm I} - {\bm{T}})^{-1} {\bm t} -1 \right), \quad \textrm{with } {\bm t} = - {\bm T} 
[1,\ldots,1]',
 \end{align*}
which can be extended to $s \in \mathbb{C}$ except at the eigenvalues of ${\bm T}$.    Suppose $\{ -\xi_{i,r}; i \in \mathcal{I}_r\}$ is the set of the roots of the equality $\psi(s) = r$ with negative real parts, and if these are assumed distinct and $\sigma > 0$, then
the scale function can be written
\begin{align*}
W^{(r)}(x) =  \sum_{i \in \mathcal{I}_r}C_{i}  (e^{\Phi_r x}- e^{-\xi_{i,r}x}),
\end{align*}
where
\begin{align*}
C_{i} &:= \left. \frac { s+\xi_{i,r}} {r-\psi(s)} \right|_{s = -\xi_{i,r}} = - \frac 1 {\psi'(-\xi_{i,r})};
\end{align*}
see \cite{Egami_Yamazaki_2010_2}.  Here $\{ \xi_{i,r}; i \in \mathcal{I}_r \}$ and  $\{ C_{i}; i \in \mathcal{I}_r \}$ are possibly complex-valued.

With $\Phi_r^{(c)} := \lapinv - c$ and $\xi_{i,r}^{(c)} := \xi_{i,r} + c$,  for any $i \in \mathcal{I}_r$ and  $c \geq 0$, we have by \eqref{scale_measure_change} for any $x \geq 0$
\begin{align} \label{scale_phase-type_c}
\begin{split}
W_c^{(r-\psi(c))}(x) &=  \sum_{ i \in \mathcal{I}_r} C_i \Big[ e^{\Phi_r^{(c)} x} - e^{-\xi_{i,r}^{(c)}x} \Big], \\
Z^{(r-\psi(c))}_c(x) &=  1 + (r - \psi(c)) \sum_{ i \in \mathcal{I}_r} C_i \Big[ \frac 1 {\Phi_r^{(c)}} ( e^{\Phi_r^{(c)} x} - 1 ) + \frac 1 {\xi_{i,r}^{(c)}} ( e^{-\xi_{i,r}^{(c)}x} - 1 ) \Big].
\end{split}
\end{align}
Thanks to their forms as sums of exponential functions, the value function can be obtained analytically.

For our examples,  we assume $\sigma =0.2$ and $\mu = \lambda = 1$.  For the phase-type distribution for $Z$,   we assume $m = 6$ and
\begin{align*}
&{\bm T} = \left[ \begin{array}{rrrrrr}  -5.5209 &   0.0000 &   0.0000 &   0.0000 & 0.0000 &   0.0000\\
    0.0073 &  -5.4523 &   5.4443 &   0.0000 &  0.0000 &   0.0000 \\
5.4959 &   0.0000 &  -5.4959 &   0.0000 & 0.0000 &   0.0000\\
   0.2193 &  0.0030 &  0.2920 &  -5.6885 &  5.1589 &   0.0154 \\
0.2703 &   0.8484 &   0.0027 &   0.0000 &   -5.6502 &   4.5262\\
0.0020  & 4.8467 &  0.0157  & 0.0000  &0.0000 & -5.9780 \end{array} \right], \quad {\bm \alpha} =    \left[ \begin{array}{l}             0.0000 \\   0.0048 \\   0.0044 \\   0.9906 \\  0.0002 \\   0.0000 \end{array} \right],
\end{align*}
which give an approximation of the Weibull distribution with density function $f(x) = 2 x \exp \left\{ - x^2\right\}$, $x \geq 0$ (which satisfies Assumption \ref{assump_tail}),
obtained using the EM-algorithm; see \cite{Egami_Yamazaki_2010_2} regarding the approximation performance of the corresponding scale function.

\subsection{Numerical results on the one-stage problem}  We first consider the one-stage problem as studied in Section \ref{section_single_stopping}.     
In our numerical examples, we consider two examples for $g$ satisfying \eqref{g_polynomial}:
\begin{enumerate}
\item[(a)] \emph{mixture of exponential functions:} $g^{(exp)}= K- \sum_{i=1}^N c_i e^{a_i x}$
for some constants $K \in \R$ and $c_i, a_i > 0$, $1 \leq i \leq N$, $N \geq 0$;
\item[(b)] \emph{linear function}: $g^{(lin)}(x) :=-\alpha x$, $x \in \R$,
for some $\alpha > 0$.
\end{enumerate}
  Regarding $f$, we consider the following three examples:
\begin{enumerate}
\item[(i)] \emph{simple function}: $f^{(sim)}(y) := \sum_{-\infty < n < \infty} f^{(n)} 1_{I_n} (y)$ for some constants $\cdots < f^{(-2)} < f^{(-1)} < f^{(0)} < f^{(1)} < f^{(2)} < \cdots$ such that $-\infty < \lim_{n \downarrow -\infty} f^{(n)} \leq \lim_{n \uparrow \infty} f^{(n)} < \infty$  and subdivisions $I_n := (l_n, l_{n+1}]$ of $\R$;
\item[(ii)] \emph{linear function}: $f^{(lin)}(y) := b_1 (y + b_2)$ for some $b_1 > 0$ and $b_2 \in \R$;
\item[(iii)] \emph{exponential function with an upper bound}: $f^{(exp)}(y) := e^{(L y) \wedge B}$ for some $L > 0$ and $B \in \R$.
\end{enumerate}
These satisfy Assumption \ref{assump_optimality2}(1) and in particular (ii) and (iii) satisfy Assumption \ref{assump_optimality}(1).  Hence Proposition \ref{proposition_general} holds  (or $\widetilde{u} = u_{A^*}$)  for any choice and in particular Proposition \ref{theorem_polynomial} holds (or $u = \widetilde{u} = u_{A^*}$) for (ii) or (iii).

In order to implement the optimal strategy, we first obtain $A^*$ using \eqref{lambda_A_polynomial} and then compute the value function via \eqref{value_function}.  
In our numerical results, for  $g$ and $f$, we consider any combination of the following:
\begin{enumerate}
\item[(a)] $g = g^{(exp)}$ with $a = [0.1,0.2,0.3,0.4]$ and $c=[4,3,2,1]$ and $K = 10$;
\item[(b)]  $f = g^{(lin)}$ with  $\alpha =1$;
\end{enumerate}
and
\begin{enumerate}
\item[(i)] $f = \gamma f^{(sim)}$ with $I_1 = (-\infty, 0)$, $I_2 = [0,\infty)$, $f^{(1)}=-10$ and  $f^{(2)}=10$;
\item[(ii)]  $f = \gamma f^{(lin)}$ with  $b_1=1$ and  $b_2=0$;
\item[(iii)]  $f = \gamma f^{(exp)}$ with  $L=B=1$;
\end{enumerate}
for the weight parameter $\gamma =0,0.05, 0.1$.  

The results for (a) $g = g^{(exp)}$ and (b) $g = g^{(lin)}$ are graphically shown in Figures \ref{figure_exp} and  \ref{figure_lin}, respectively.  In each figure, we plot the function $\Lambda(\cdot)$ as in \eqref{Large_lambda} and the value function $u_{A^*}$ for each choice of $f$.  As can be confirmed, the function $\Lambda(\cdot)$ is indeed monotonically increasing and hence the unique root $A^*$ of  $\Lambda(A)=0$ can be obtained easily by the bisection method.  Using these optimal threshold levels, the value functions are computed via \eqref{value_function}.

We see that the value functions are differentiable even at the optimal threshold levels $A^*$ and this confirms the smooth fit as in Remark \ref{remark_fit} because $X$ is of unbounded variation with $\sigma > 0$.  

In order to verify that these are indeed optimal, we focus on the case $\gamma = 0.05$ and plot in Figure \ref{figure_comparison} the value function $u_{A^*}$ in comparison to the expected values of ``perturbed" strategies $u_A(\cdot)$ for $A=A^*-2, A^*-1,A^*+1,A^*+2$.  Notice that these can be computed by the formula  \eqref{value_function_for_any_A}.  For any choice of $A$, it is easy to see that $u_A$ is continuous as in Remark \ref{remark_fit} but fails to be differentiable at $A \neq A^*$.  We can confirm in all six cases that $u_{A^*}$ indeed dominates $u_A$ for $A \neq A^*$ uniformly in $x \in \R$. This numerically verifies Propositions \ref{theorem_polynomial} and \ref{proposition_general}.

\begin{figure}[htbp]
\begin{center}
\begin{minipage}{1.0\textwidth}
\centering
\begin{tabular}{cc}
\includegraphics[scale=0.6]{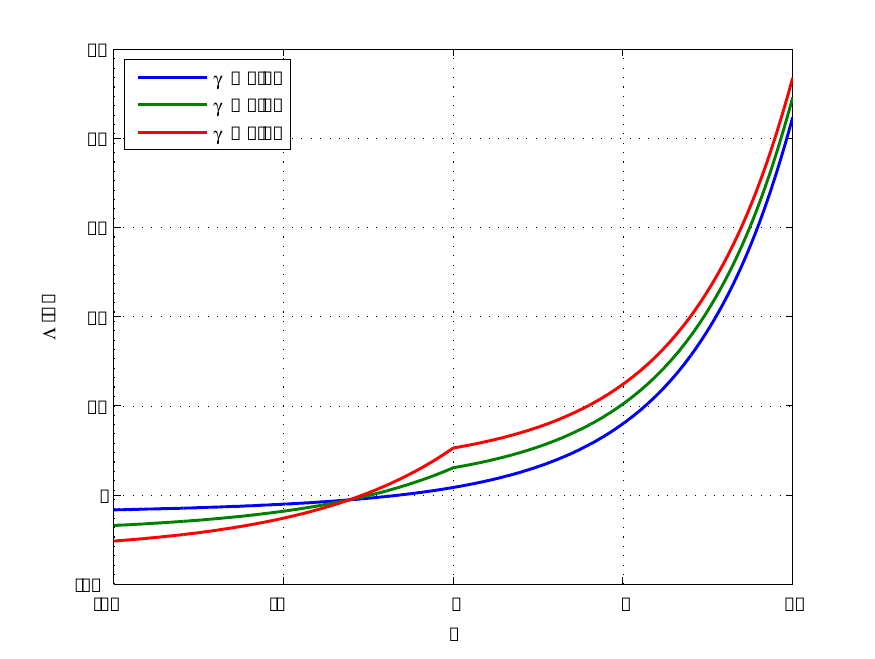}  & \includegraphics[scale=0.6]{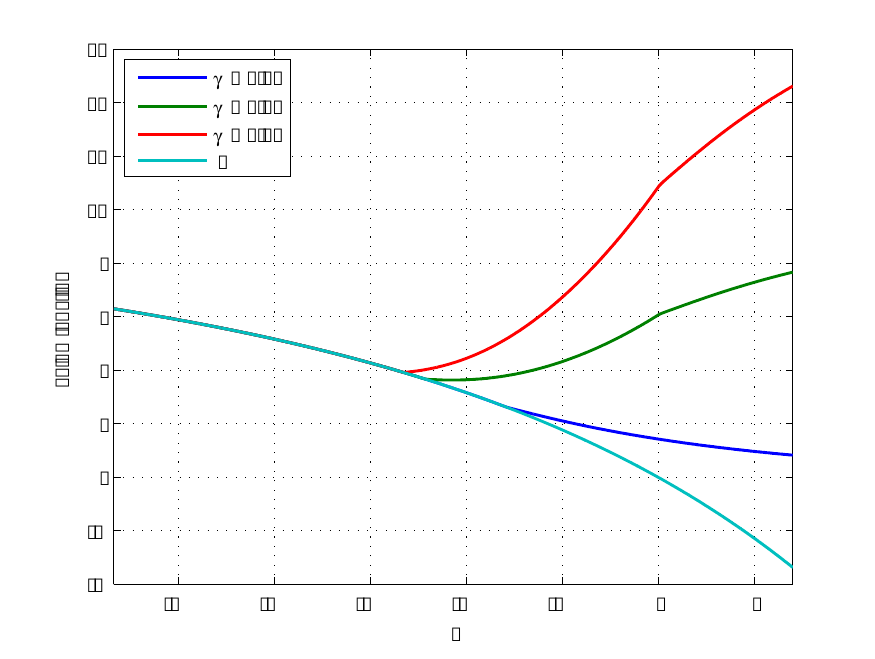} \\
$\Lambda(\cdot)$ for $f^{(sim)}$& value function for $f^{(sim)}$\\
\includegraphics[scale=0.6]{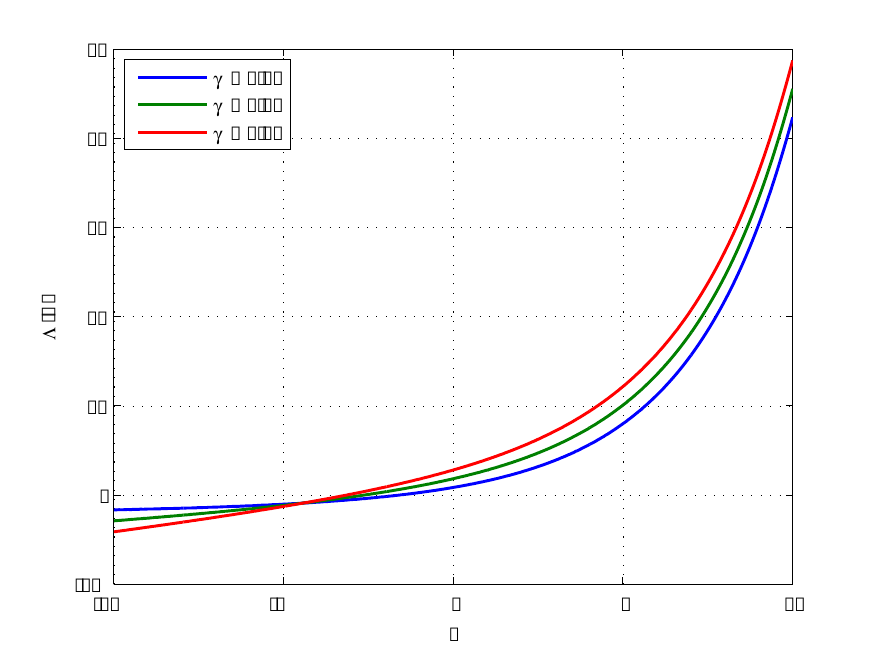}  & \includegraphics[scale=0.6]{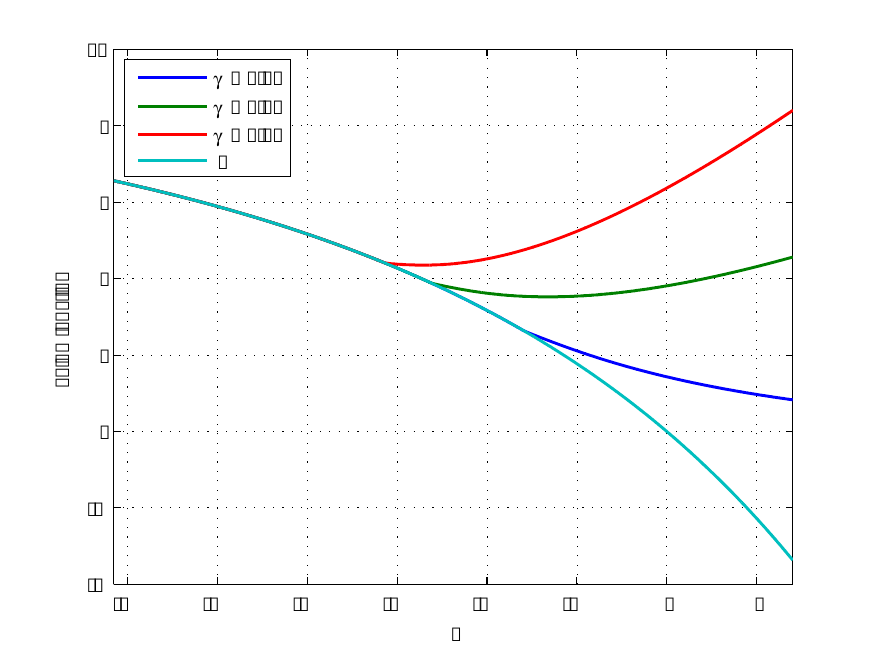} \\
$\Lambda(\cdot)$ for $f^{(lin)}$& value function for $f^{(lin)}$ \\
\includegraphics[scale=0.6]{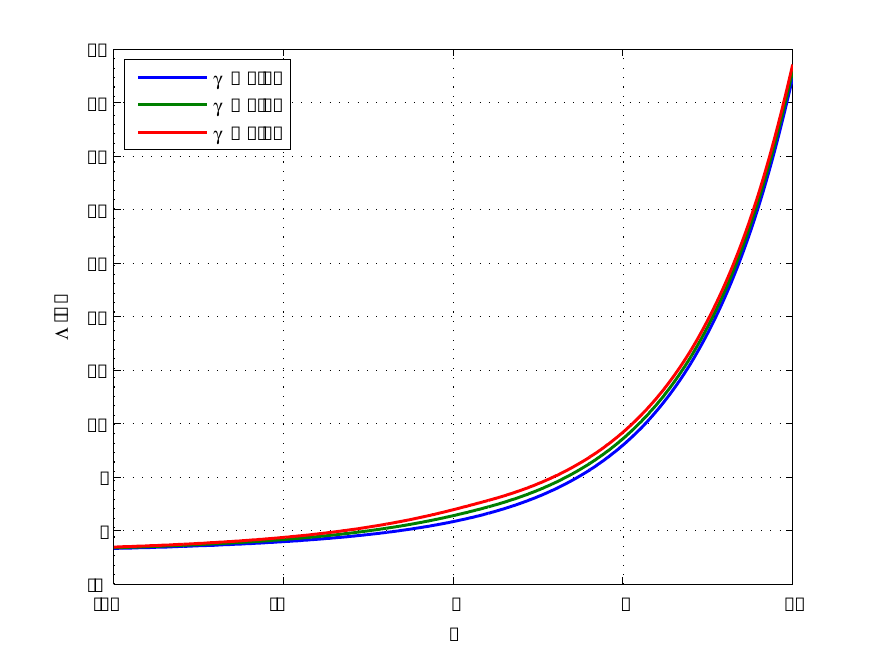}  & \includegraphics[scale=0.6]{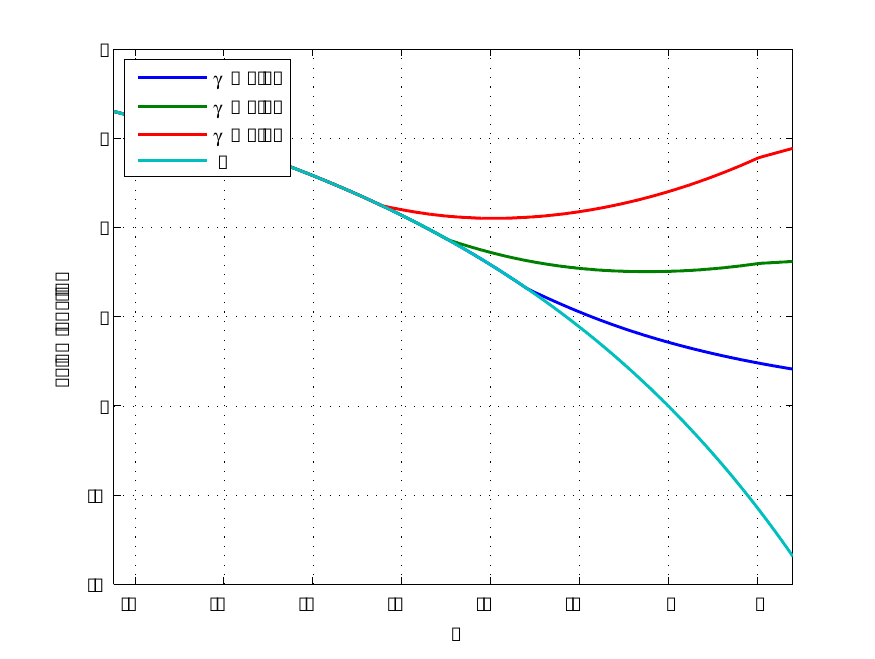} \\
$\Lambda(\cdot)$ for  $f^{(exp)}$ & value function for  $f^{(exp)}$
\end{tabular}
\end{minipage}
\caption{\small{Plots of $\Lambda$ (left) and the value function $u_{A^*}$ along with $g$ (right) when $g = g^{(exp)}$ for the cases (i)-(iii) for $f$.  The function $\Lambda$ is monotonically increasing and its unique zero becomes $A^*$.  The value function is such that it is smoothly pasted at the level $A^*$.}} \label{figure_exp}
\end{center}
\end{figure}

\begin{figure}[htbp]
\begin{center}
\begin{minipage}{1.0\textwidth}
\centering
\begin{tabular}{cc}
\includegraphics[scale=0.6]{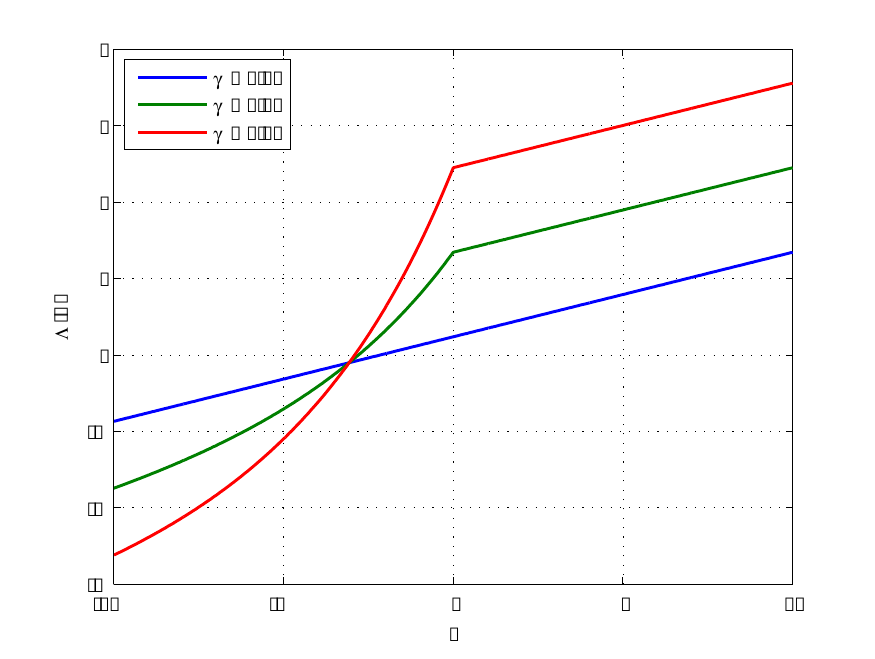}  & \includegraphics[scale=0.6]{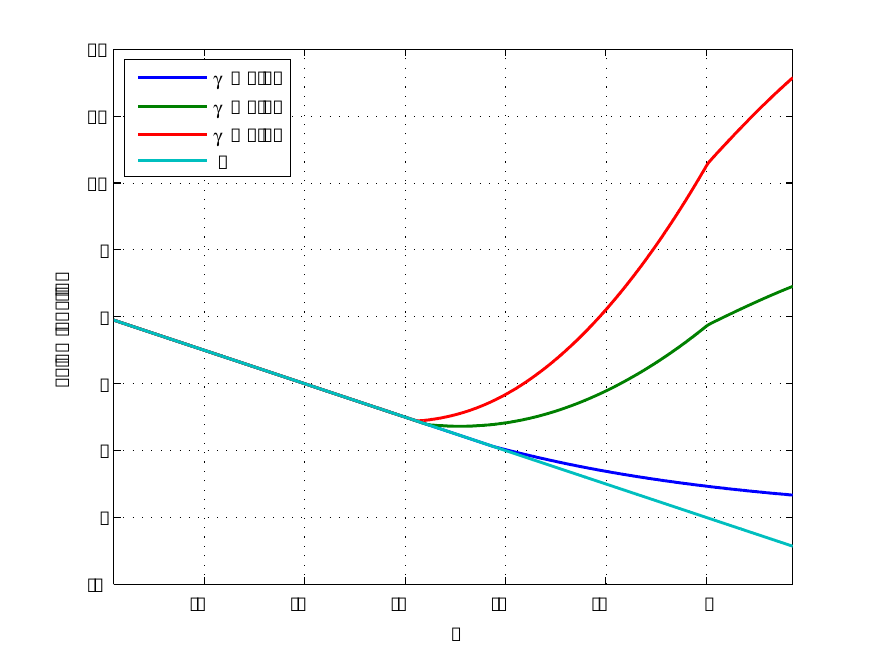} \\
$\Lambda(\cdot)$ for $f^{(sim)}$& value function for $f^{(sim)}$\\
\includegraphics[scale=0.6]{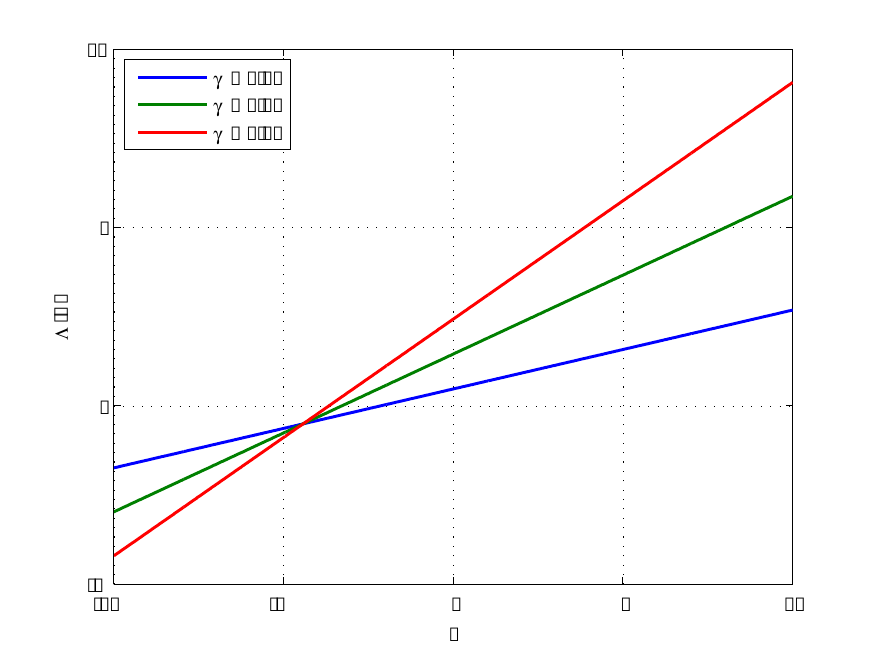}  & \includegraphics[scale=0.6]{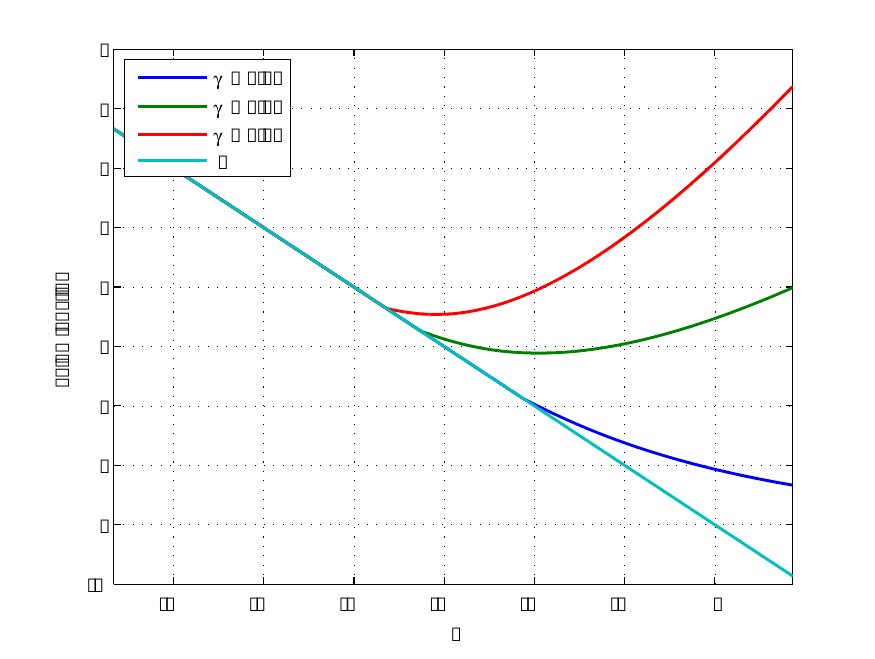} \\
$\Lambda(\cdot)$ for $f^{(lin)}$& value function for $f^{(lin)}$ \\
\includegraphics[scale=0.6]{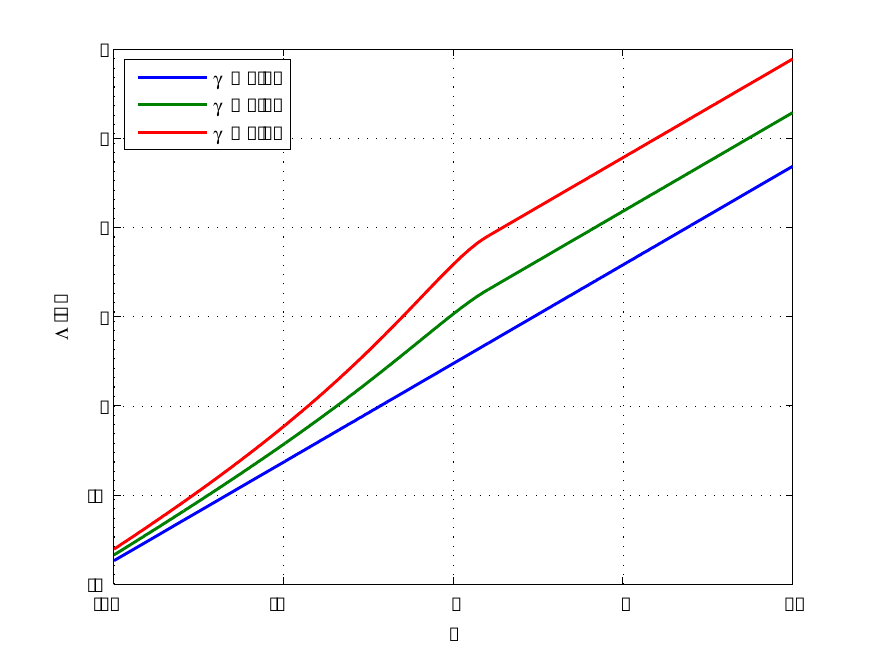}  & \includegraphics[scale=0.6]{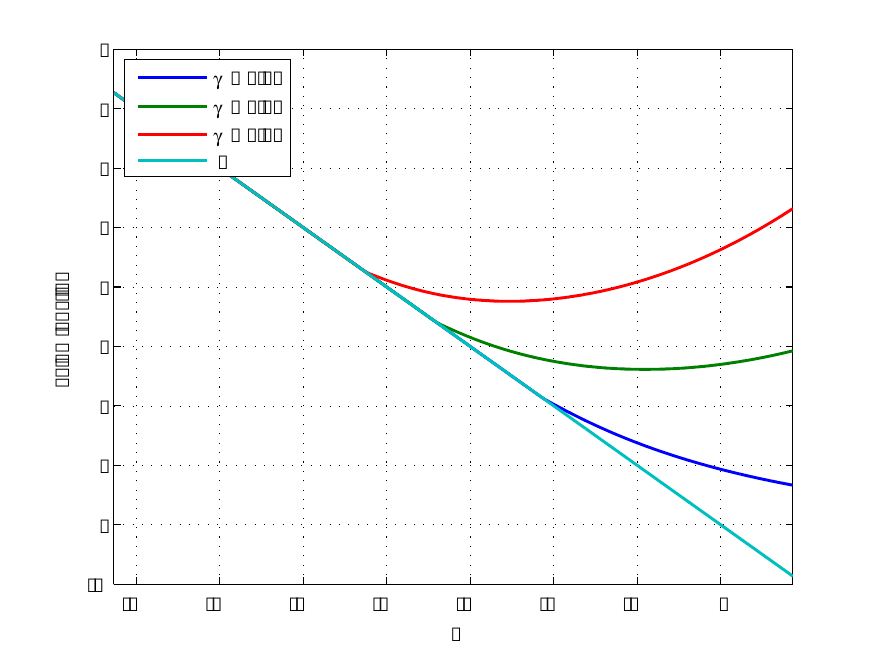} \\
$\Lambda(\cdot)$ for  $f^{(exp)}$ & value function for  $f^{(exp)}$
\end{tabular}
\end{minipage}
\caption{\small{Plots of $\Lambda$ (left) and the value function $u_{A^*}$ along with $g$ (right) when $g = g^{(lin)}$ for the cases (i)-(iii) for $f$. }} \label{figure_lin}
\end{center}
\end{figure}

\begin{figure}[htbp]
\begin{center}
\begin{minipage}{1.0\textwidth}
\centering
\begin{tabular}{cc}
\includegraphics[scale=0.6]{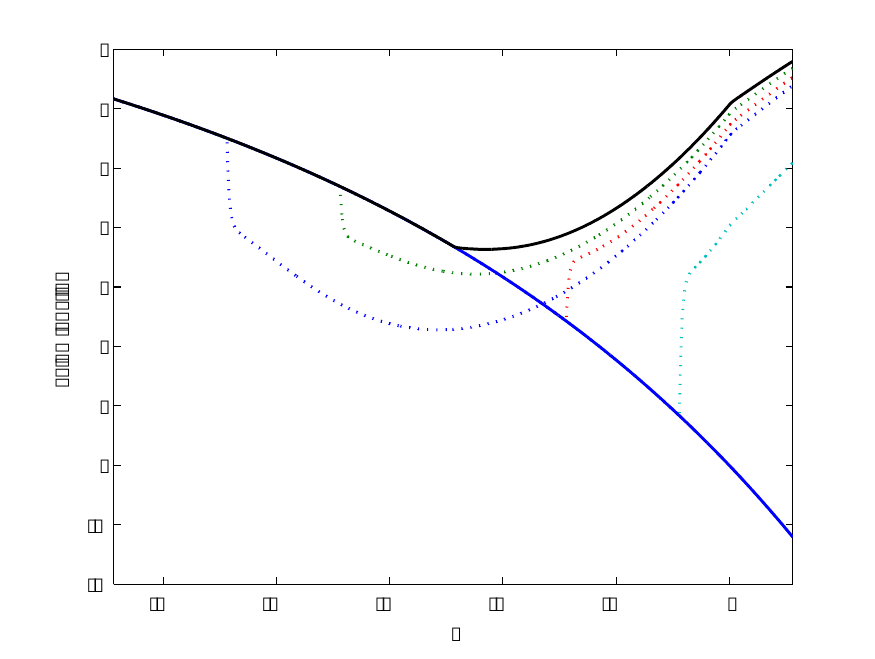}  & \includegraphics[scale=0.6]{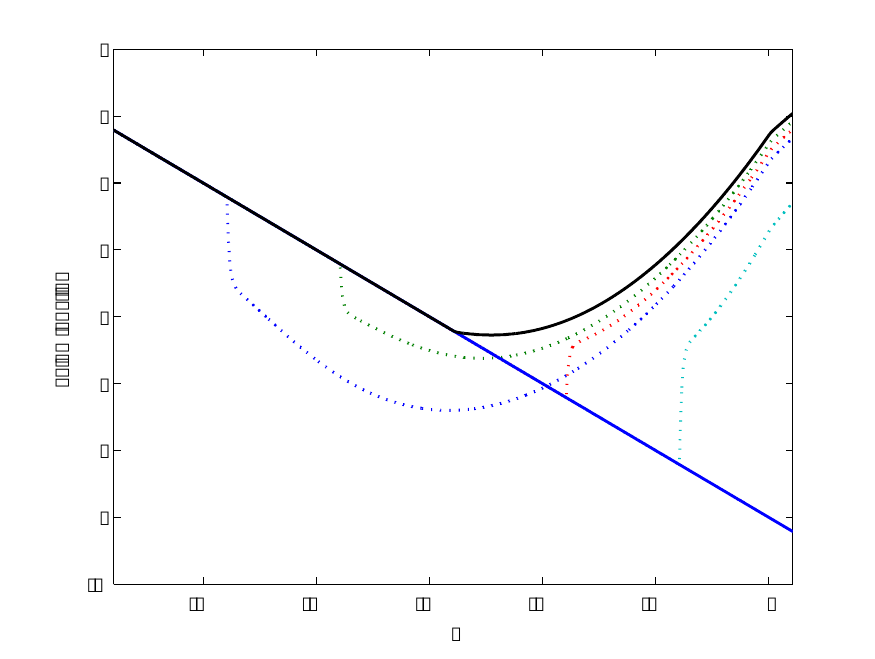} \\
(a-i)  $g^{(exp)}$ with $f^{(sim)}$& (b-i)  $g^{(lin)}$ with $f^{(sim)}$ \\
\includegraphics[scale=0.6]{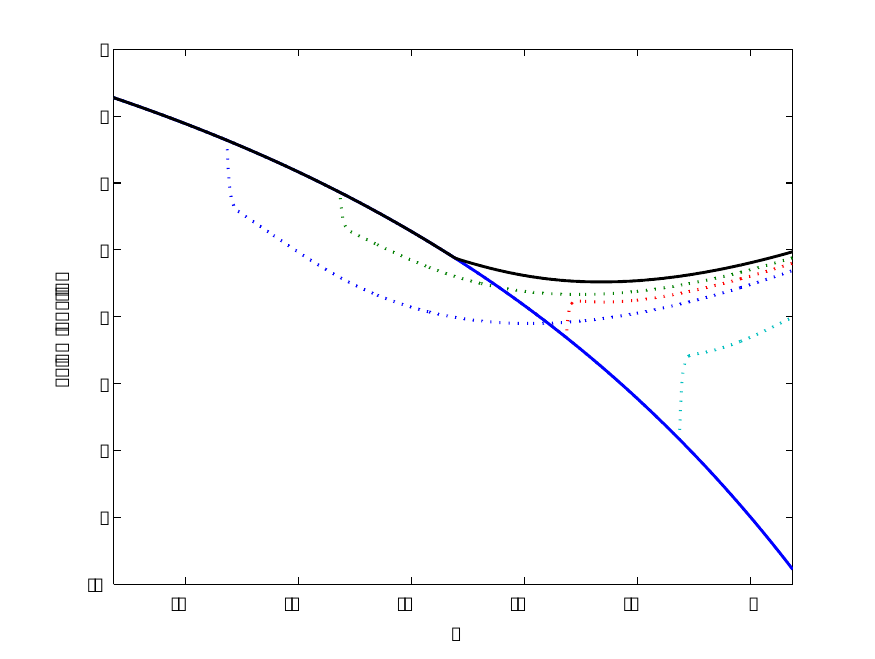}  & \includegraphics[scale=0.6]{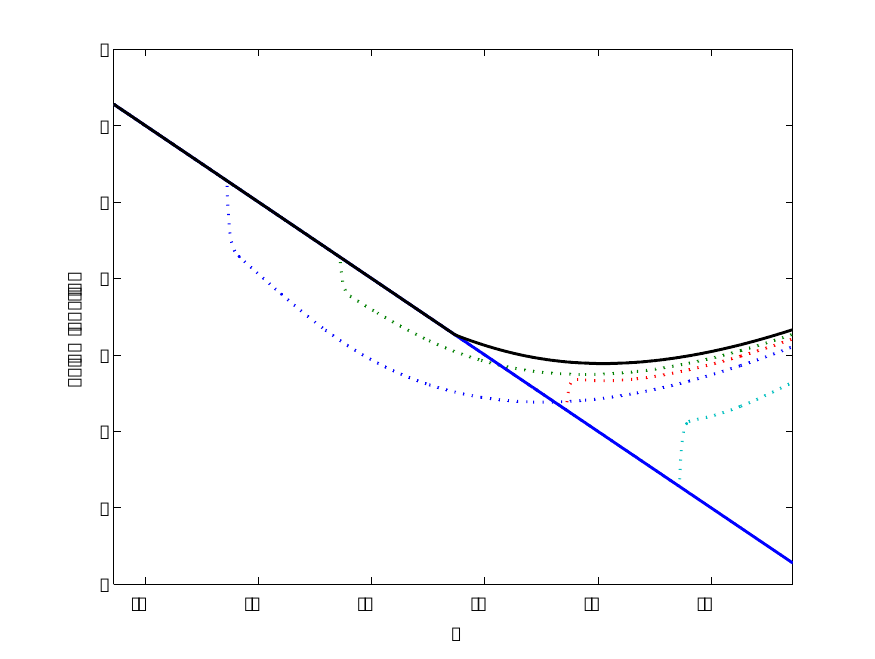} \\(a-ii)  $g^{(exp)}$ with $f^{(lin)}$& (b-ii)  $g^{(lin)}$ with $f^{(lin)}$ \\
\includegraphics[scale=0.6]{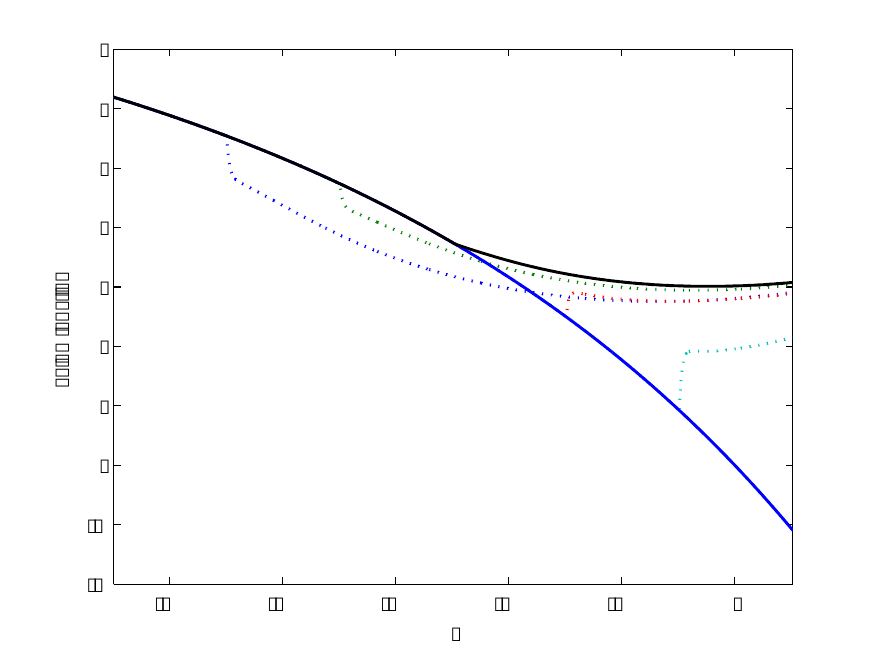}  & \includegraphics[scale=0.6]{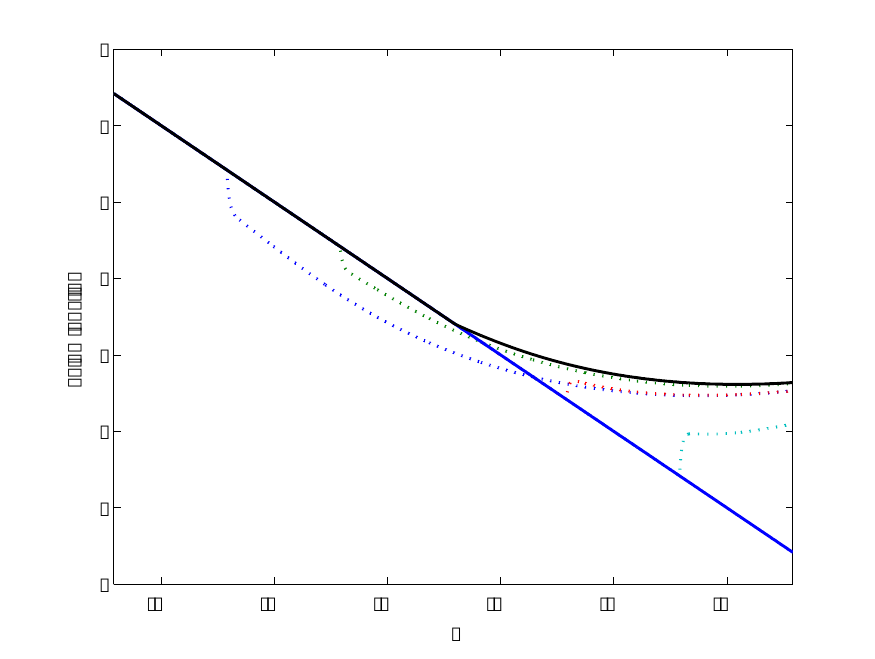} \\
(a-iii)  $g^{(exp)}$ with $f^{(exp)}$& (b-iii)  $g^{(lin)}$ with $f^{(exp)}$ \\
\end{tabular}
\end{minipage}
\caption{\small{Illustration of optimality for the one-stage problem for each combination of $g$ and $f$.  The value function $u_{A^*}$  is plotted in solid black and the stopping value $g$ is in solid blue.  As a comparison, the expected payoff functions corresponding to the perturbed strategies  $u_A(\cdot)$ for $A=A^*-2, A^*-1,A^*+1,A^*+2$ are shown in dotted.}} \label{figure_comparison}
\end{center}
\end{figure}

\subsection{Numerical results on the multiple-stage problem}  We now move onto the multiple-stage problem.  We assume $M=3$ for brevity and use for $f$ and $g$ the functions (a)-(b) and (i)-(iii) defined for the one-stage problem.

For $m=1,3$, we assume $g_m = g^{(exp)}$ for some $a_m = (a_{mi})_{1 \leq i \leq N_m}$ and $c_m= (c_{mi})_{1 \leq i \leq N_m}$  with $N_m = 4$ and a fixed value $K = 10$, whereas for $m=2$, $g_2 = g^{(lin)}$ for some $\alpha_2$.  Also,  we let (i) $f_1 = \gamma_1 f^{(sim)}$ with $I_1 = (-\infty, 0)$, $I_2 = [0,\infty)$, $f^{(1)}=-10$ and  $f^{(2)}=10$, (ii) $f_2 = \gamma_2 f^{(lin)}$ for $b_1=1$ and  $b_2=0$ and  (iii) $f_3 = \gamma_3 f^{(exp)}$ for $L=B=1$.

We conduct a number of experiments for various values of  coefficients.  By using the algorithm given in Subsection \ref{subsection_multi_stage}, the optimal threshold levels $A^* = (A^{*(1)}, A^{*(2)}, A^{*(3)})$ take values among $\{A_1^*, A_2^*, A_3^*, A_{\{1,2\}}^*, A_{\{2,3\}}^*, A_{\{1,2,3\}}^* \}$ and satisfy one of the following four cases:
\begin{enumerate}
\item[] Case 1: $A^{*(1)}=A^{*(2)}= A^{*(3)}$; 
\item[] Case 2: $A^{*(1)}>A^{*(2)}= A^{*(3)}$; 
\item[]  Case 3: $A^{*(1)}=A^{*(2)}> A^{*(3)}$;
\item[]  Case 4: $A^{*(1)} > A^{*(2)}> A^{*(3)}$.
\end{enumerate}

Here we use a random number generator to sample $a_m$, $c_m$ for $m = 1,3$, $\alpha_2$, and $\gamma_m$ for each $1 \leq m \leq 3$  until we attain each of Cases 1 to 4.  The generated parameters and the corresponding threshold levels are summarized in Table \ref{table_A_param}.  In order to validate the optimality of the strategy $(\tau_{A^{*(1)}}, \tau_{A^{*(2)}}, \tau_{A^{*(3)}})$, we compare
in Figure \ref{figure_comparison_mult} the value function with those of perturbed strategies $(\tau_{\widetilde{A}_k^{(1)}}, \tau_{\widetilde{A}_k^{(2)}}, \tau_{\widetilde{A}_k^{(3)}})$, $1 \leq k \leq 6$, where
\begin{align*}\widetilde{A}_k := (A^{*(1)}, A^{*(2)},  A^{*(3)}) + \delta_k
\end{align*}
 with $\delta_1 := (1,0,0)$,  $\delta_2 := (1,1,0)$,  $\delta_3 := (1,1,1)$,  $\delta_4 := (0,0,-1)$,  $\delta_5 := (0,-1,-1)$ and $\delta_6 := (-1,-1,-1)$.
It is clear that $(\tau_{\widetilde{A}_k^{(1)}}, \tau_{\widetilde{A}_k^{(2)}}, \tau_{\widetilde{A}_k^{(3)}}) \in \ST_3 \subset \S_3$ because $\widetilde{A}^{(1)}_k \geq \widetilde{A}^{(2)}_k \geq \widetilde{A}^{(3)}_k$ by construction. Figure \ref{figure_comparison_mult} suggests in all cases that the value obtained by $(\tau_{A^{*(1)}}, \tau_{A^{*(2)}}, \tau_{A^{*(3)}})$  dominates uniformly over $x$ those obtained by the perturbed strategies.  These results are indeed consistent with our main theoretical results.
In view of Figure \ref{figure_comparison_mult}, we also observe that there are up to three kinks (at $A^*$) in the value function although these are still differentiable.  This is again due to smooth fit as in Remark \ref{remark_fit}.  The perturbed strategies on the other hand fail to be differentiable while they are still continuous.

\begin{table}[htb]
 \begin{tabular}{|c||c|c|} \hline
 & $a_1$ &  $a_3$ \\ \hline 
Case 1 &(0.49,   0.19,   0.17,   0.03)   & (0.05,   0.24,   0.46,   0.13)\\
Case 2  &   (0.47,   0.17,   0.06,   0.12)  &
   (0.24,   0.18,   0.19,   0.05)   \\ 
Case 3  &    (0.04,   0.06,   0.01,   0.12) &
   (0.01,   0.11,   0.26,   0.05)  \\
Case 4  &   (0.39,   0.28,   0.17,   0.16) &
   (0.06,   0.01,  0.40,   0.08)    \\
\hline
  \end{tabular} 
\\  \vspace{0.3cm}
 \begin{tabular}{|c||c|c|} \hline
 & $c_1$ &    $c_3$ \\ \hline 
Case 1 & (2.11,   2.09,   3.51,   3.49) &
   (4.71,   1.51,   2.70,   0.89) \\
Case 2  &     (0.66,   2.88,   1.77   0.22) &
   (4.78,   1.17,   0.08,   3.24)  \\ 
Case 3  &       (1.84,   2.39,   4.20,   2.23) &
   (3.01,   3.72,   2.57,   4.20) \\
Case 4  &     (3.01,   3.45,   0.42,   0.76) &
   (3.27,   2.25,   4.57,   2.69)   \\
\hline
  \end{tabular} 
 \\  \vspace{0.3cm}
  \begin{tabular}{|c||c|} \hline
 & $\alpha_2$  \\ \hline 
Case 1  &0.9991 \\
Case 2   &  0.6477   \\ 
Case 3  &0.6265\\
Case 4  &  0.0782    \\
\hline
  \end{tabular}   \hspace{0.5cm}
  \begin{tabular}{|c||c|c|c|} \hline
 & $\gamma_1$ (simple)&  $\gamma_2$ (linear) &  $\gamma_3$ (exponential)  \\ \hline 
Case 1  &0.2920 &   0.4317 &   0.0155\\
Case 2   &  0.4173 &   0.0497 &   0.9027  \\ 
Case 3  &0.3070 &   0.0611 &   0.2195  \\
Case 4  &  0.0759  & 0.0540 &  0.5308  \\
\hline
  \end{tabular} 
 \\  \vspace{0.3cm}
  \begin{tabular}{|c||c|c|c|c|c|c||c|c|c|} \hline
 & $A_1^*$ &  $A_2^*$ &  $A_3^*$ & $A_{\{1,2\}}^*$ &  $A_{\{2,3\}}^*$ &  $A_{\{1,2,3\}}^*$  & $A^{*(1)}$ &  $A^{*(2)}$ &  $A^{*(3)}$ \\ \hline 
Case 1 &-2.44 &  -2.83 &  -1.39 &  -2.59&-2.03&  -2.21 &  -2.21 &  -2.21&  -2.21\\
Case 2  & -0.48&  -2.31&  -2.18&  -0.85&  -2.22&  -1.34&  -0.48&  -2.22&  -2.22\\ 
Case 3  & -3.15&  -2.35&  -5.67&  -2.85&  -4.14&-3.75& -2.85&  -2.85&  -5.67 \\
Case 4  &-0.76&  -3.07&  -3.64&  -0.85& -3.59&  -1.89& -0.76&  -3.07&  -3.64 \\
\hline
  \end{tabular} 
\caption{Parameters and threshold levels for the plots in Figure \ref{figure_comparison_mult}.  These values are chosen so that the parameter set satisfies each of Cases 1-4. } \label{table_A_param} 
\end{table} 

\begin{figure}[htbp]
\begin{center}
\begin{minipage}{1.0\textwidth}
\centering
\begin{tabular}{cc}
\includegraphics[scale=0.6]{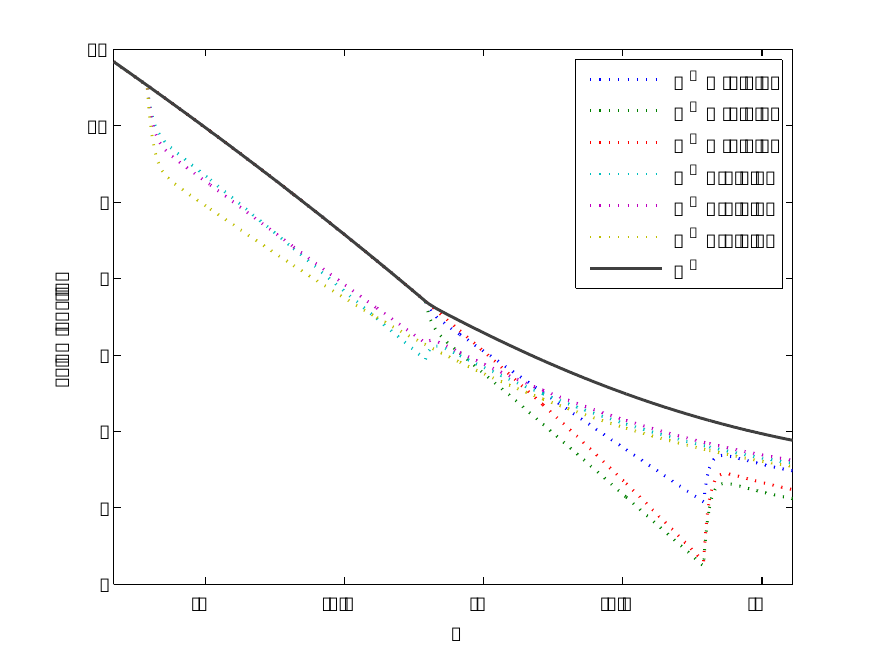}  & \includegraphics[scale=0.6]{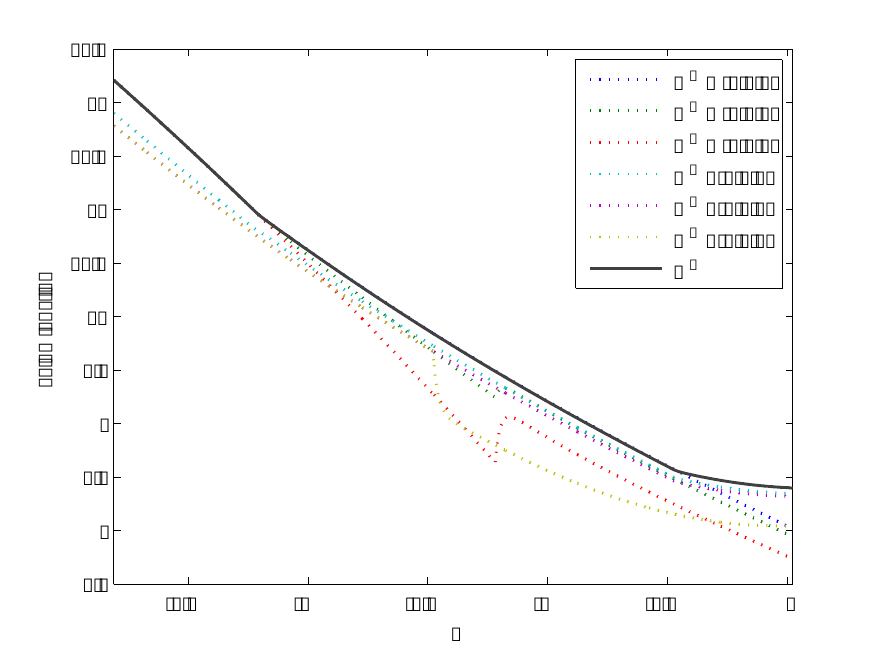} \\
Case 1 & Case 2 \\
\includegraphics[scale=0.6]{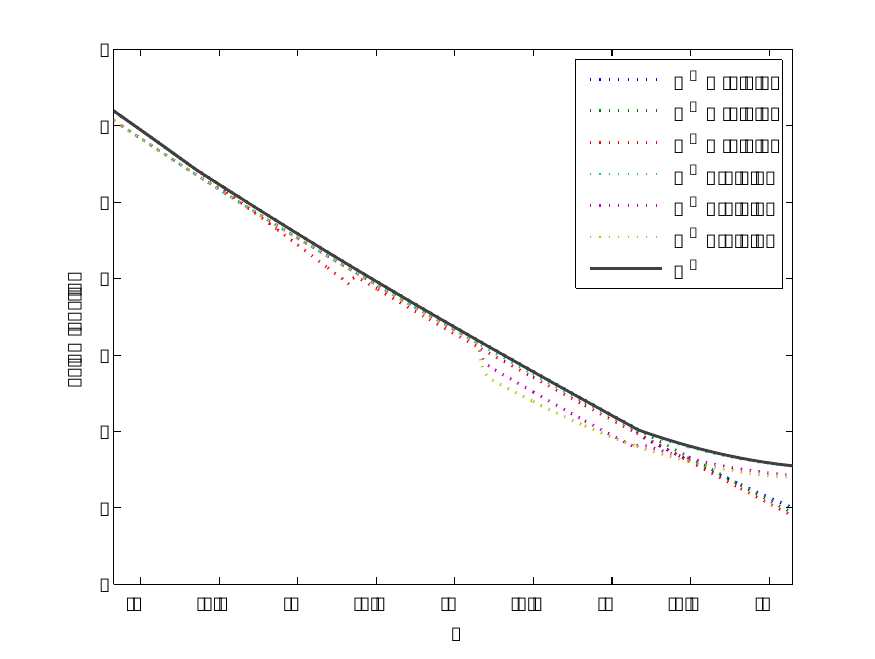}  & \includegraphics[scale=0.6]{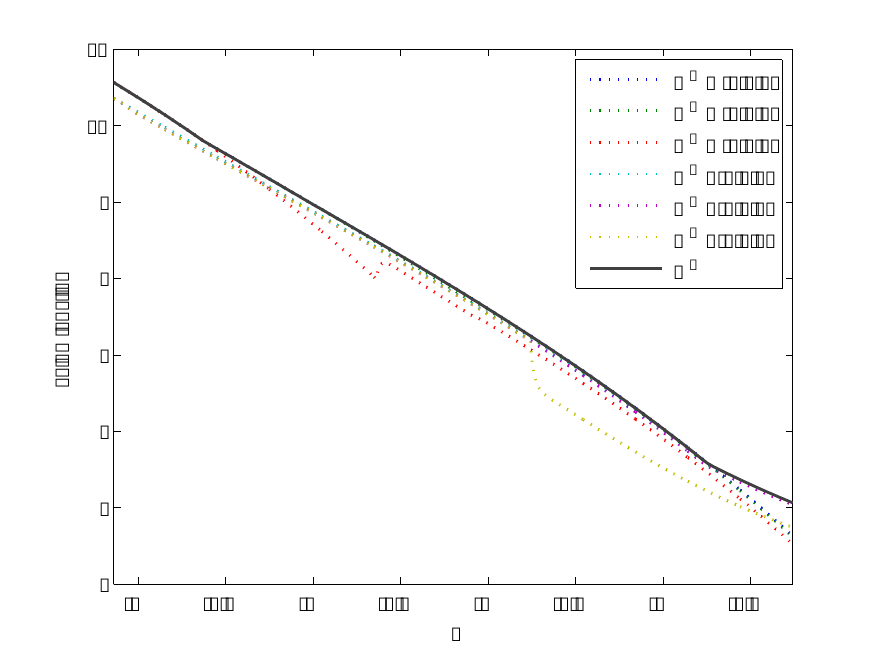} \\ Case 3 & Case 4
\end{tabular}
\end{minipage}
\caption{\small{Illustration of optimality for the multiple-stage problem.  The parameters for $g$ and $f$ are shown in Table \ref{table_A_param}.  The value function $U^{(3)}$  is plotted in solid black against the expected payoff functions corresponding to the perturbed strategies with threshold levels $\widetilde{A}$.}} \label{figure_comparison_mult}
\end{center}
\end{figure}

\section{Concluding Remarks}  \label{section_conclusion}In this paper, we studied a wide class of optimal stopping problems for a general spectrally negative \lev process and extended them to multiple-stopping.  Our framework is applicable to a wide range of settings particularly in real option problems where the firm withdraws from a project in stages.  Our analytical results suggest that the optimal solutions can be characterized by the threshold levels that are zeros of  certain monotone functions, and the corresponding value functions can be expressed in terms of the scale function.  Our  numerical experiments suggest, for the phase-type jump case, that these can be solved instantaneously with high precision.   These tools we developed in this paper are highly valuable and 
can be used flexibly for analysis in real options and other fields of finance and industrial applications.  

%

There are several directions for future research.  First, our results can be pursued for a general \lev process with both positive and negative jumps.  While it makes the problem less tractable, it is expected that these can be done at least for the cases with rational forms of Wiener-Hopf factors such as meromorphic \lev processes \cite{Kuznetsov_2010} and phase-type \lev processes \cite{Asmussen_2004}. Second, by using phase-type fitting, one can approximate any \lev process by those with phase-type jumps as in Section \ref{section_numer}. By calibrating with real financial and industrial data as in \cite{AsmussenMadanPistorius07},  one can conduct detailed empirical analyses on optimal stopping strategies and the value functions.  Finally, it is an interesting extension to consider ``swing option type" multiple-stopping with refraction periods as in \cite{Carmona_dayanik, Touzi_Carmona}  where  any two consecutive stoppings must be separated by  fixed constants.


\appendix


\section{Proofs}

\subsection{Proof of Lemma \ref{lemma_lambda_A_polynomial}} \label{proof_lemma_lambda_A_polynomial}
Let $g_1(x) := -bx$, $x \in \R$. We have
\begin{align*}\rho_{g_1,A}^{(r)} &= -b \int_{(0,\infty)}\Pi (\diff u)   \int_0^{u} e^{-\Phi_r z} (z-u) \diff z  = -\frac b {\Phi_r^2}\int_{(0,\infty)} \Pi (\diff u)   (1- e^{-\Phi_r u} - \Phi_r u),
\end{align*}
and hence
\begin{align*}
\Lambda(A; 0, g_1) &:= -\frac r {\lapinv} g_1(A)  - \frac {\sigma^2} 2 g_1'(A) + \rho_{g_1,A}^{(r)}=  b \Big[ \frac r {\lapinv} A  + \frac {\sigma^2} 2  -  \frac 1 {\Phi_r^2} \int_{(0,\infty)}\Pi (\diff u)   (1- e^{-\Phi_r u} -  \Phi_r u) \Big] \\
&=   \frac b {\Phi_r^2}\Big[ c  \Phi_r
+\frac{\sigma^2} 2  \Phi_r^2 +\int_{( 0,\infty)}(e^{- \Phi_r
u}-1+ \Phi_r u 1_{\{0 <u<1\}})\,\Pi(\diff u) \Big] \\
&- \frac b {\Phi_r} \Big[ c   - \int_{[1,\infty)} u \Pi(\diff u)  - r A \Big] \\
&= b \Big[ \frac r  {\Phi_r^2}- \frac {\psi'(0+) -r A } {\Phi_r} \Big].
\end{align*}
Let $g_2(x) = -\sum_{i=1}^N c_i e^{a_i x}$, $x \in \R$.  Then,
\begin{align}
\rho_{g_2,A}^{(r)} = - \sum_{i=1}^N c_i e^{a_i A} \int_{(0,\infty)} \Pi (\diff u) \int_0^u e^{-\Phi_r z} (e^{a_i(z-u)} - 1) \diff z. \label{rho_this_case}
\end{align}
(Case 1) First suppose $a_i \neq \Phi_r$ for all $1 \leq i \leq N$.   Simple algebra gives
\begin{align}
\Lambda(A)= -\frac r {\lapinv}K + b \Big( \frac r  {\Phi_r^2}+ \frac {r A - \psi'(0+)} {\Phi_r} \Big) + \sum_{i=1}^N c_i e^{a_i A} \frac {M_r^{(a_i)}} {\lapinv}  + \Psi_f(A) \label{Psi_american}
\end{align}
where
\begin{align*}
M_r^{(a)} &:= r + \frac {a \sigma^2} 2 \lapinv + \int_{(0,\infty)} \Pi (\diff u) \Big[  (1 - e^{-\lapinv u}) - e^{-a u} (1 - e^{-(\lapinv-a) u}) \frac {\lapinv} {\lapinv -a}\Big], \quad a \in \R \backslash \{\Phi_r \}.
\end{align*}
By the definition of $\psi$ and $\Phi_r$, we rewrite $M_r^{(a)}$ as
\begin{align*}
\begin{split}
 & r + \frac {a \sigma^2} 2  \lapinv + \int_{(0,\infty)} \Pi (\diff u) \left[  (1 - e^{-\lapinv u} - \lapinv u 1_{\{ 0 < u < 1 \}})  - e^{-au} (1 - e^{-(\lapinv-a) u}) \frac {\lapinv} {\lapinv -a} + \lapinv u 1_{\{ 0 < u < 1\}}\right] \\
&= \Big( c + a \sigma^2 - \int_{(0,1)} u (e^{-a u}-1) \Pi(\diff u) \Big) \lapinv + \frac {\sigma^2} 2 \lapinv (\lapinv - a ) \\ &\qquad - \frac {\lapinv} {\lapinv -a} \int_{(0,\infty)} \Pi (\diff u) e^{-a u} \left( 1 - e^{-(\lapinv-a) u} - (\lapinv-a) u 1_{\{0 <  u <1 \}} \right) \\
&= \frac {\lapinv} {\lapinv - a} \psi_a(\lapinv-a), 
\end{split} 
\end{align*}
where the last equality holds by \eqref{psi_a}. 
On the other hand, $\psi_a(\lapinv-a) = \psi(\lapinv) - \psi(a) = r - \psi(a)$; see page 213 of \cite{Kyprianou_2006}.
Hence
\begin{align*}
\lapinv \varpi_r(a) = \frac {\lapinv} {\lapinv - a} (r- \psi(a))  &= \frac {\lapinv} {\lapinv - a} \psi_a(\lapinv-a),
\end{align*}
which shows for the case $a_i \neq \Phi_r$ for all $1 \leq i \leq N$.   

(Case 2) Suppose $a_j = \Phi_r$ for some $1 \leq j \leq N$ (with $a_i \neq a_j$ for $i \neq j$ by assumption).  Take a sequence of (strictly) increasing sequence $a_j^{(m)} \uparrow a_j = \Phi_r$.
Then a modification of \eqref{Large_lambda} with $a_j$ replaced with $a_j^{(m)}$ is by Case 1
\begin{align*}
\Lambda^{(m)}(A)= -\frac r {\lapinv}K + b \Big( \frac r  {\Phi_r^2}+ \frac {r A - \psi'(0+)} {\Phi_r} \Big) + \sum_{1 \leq i \leq N, i \neq j} c_i e^{a_i A}  \varpi_r(a_i)  + c_j e^{a_j^{(m)} A}  \varpi_r(a_j^{(m)})  + \Psi_f(A). 
\end{align*}
By the definition of $\varpi_r$ as in \eqref{varpi}, we have
\begin{align*}
\lim_{m \uparrow \infty}\Lambda^{(m)}(A)= -\frac r {\lapinv}K +  b \Big( \frac r  {\Phi_r^2}+ \frac {r A - \psi'(0+)} {\Phi_r} \Big)  + \sum_{i=1}^N c_i e^{a_i A} \varpi_r(a_i)  + \Psi_f(A).
\end{align*}
On the other hand, in view of \eqref{rho_this_case}, its integrand is monotone in $a$.  Hence by the monotone convergence theorem and because $g$ and $g'$ are continuous in $a$, $\lim_{m \uparrow \infty}\Lambda^{(m)}(A) = \Lambda(A)$, and the proof is complete for Case 2.

\subsection{Proof of Lemma \ref{lemma_smoothness}} \label{proof_lemma_smoothness}Because $g(x)$ is infinitely differentiable, the results are clear for $x \in (-\infty, A^*)$.  Hence we show for $x \in (A^*, \infty)$.
Because $W^{(r)}(y)$ is differentiable on $y > 0$ as in Remark \ref{remark_smoothness_zero}(1), $K Z^{(r)} (x-A^*) - b \Big[ \overline{Z}^{(r)}(x-A^*) + \big(A^*- \frac {\psi'(0+)} r \big)Z^{(r)}(x-A^*) + \frac {\psi'(0+)} r \Big] - \sum_{i=1}^N c_i e^{a_i x} Z_{a_i}^{(r - \psi(a_i))} (x-A^*)$ is twice differentiable. 

Regarding $\Theta_f(x;A^*)$,  integration by parts thanks to the continuity of $f$ gives (with $\overline{W}^{(r)}(x) := \int_0^x W^{(r)}(y) \diff y$, $x \in \R$)
\begin{align*}
\Theta_f(x;A^*) = f(A^*) \overline{W}^{(r)} (x-A^*) + \int_{A^*}^x  f'(y) \overline{W}^{(r)}(x-y) \diff y.
\end{align*}
It is differentiable with
\begin{align*}
\Theta_f'(x;A^*) = f(A^*)  {W^{(r)}(x-A^*)} + \int_{A^*}^x f'(y) {W^{(r)}(x-y)} \diff y.
\end{align*}
When $X$ is of unbounded variation, because $W^{(r)}(0) = 0$ as in Remark \ref{remark_smoothness_zero}(2), $\Theta_f(x;A^*)$ is twice-differentiable with
\begin{align*}
\Theta_f''(x;A^*) = f(A^*)  {W^{(r)'}(x-A^*)}+ \int_{A^*}^x f'(y) {W^{(r)'}(x-y)} \diff y.
\end{align*}


\subsection{Proof of Proposition  \ref{theorem_polynomial}} \label{proof_theorem_polynomial}


(i) Suppose $-\infty < A^* \leq \infty$.  By directly using the results of \cite{Egami-Yamazaki-2011} (Lemma 3.7 and Proposition 3.4), we obtain
\begin{align} \label{results_egami_yamazaki}
\begin{split}
&(\mathcal{L}-r) u_{A^*}(x) + f(x) = 0, \quad x \in (A^*, \infty), \\
&u_{A^*}(x) \geq g(x), \quad x \in \R,
\end{split}
\end{align}
where $\mathcal{L}$ is the infinitesimal generator of $X$ applied to a sufficiently smooth function $h$, i.e.,
\begin{align*}
\mathcal{L} h(x) = c h'(x) + \frac 1 2 \sigma^2 h''(x) + \int_{(0,\infty)} [h(x-z) - h(x) + h'(x) z 1_{\{0 < z < 1\}}] \Pi (\diff z).
\end{align*}

\begin{lemma} \label{lemma_superharmonic} If  $-\infty < A^* \leq \infty$, we have $(\mathcal{L}-r) u_{A^*}(x) + f(x) \leq 0$ on $x \in (-\infty, A^*)$.
\end{lemma}
\begin{proof}Fix $-\infty < A^* < \infty$.  First, if we define $g_l(x) := x$, $x \in \R$,
\begin{align*}
(\mathcal{L}-r) g_l(x) = \psi'(0+)- rx.
\end{align*}
By the definition of $\psi$, if we define  $g_e (x) := e^{ax}$, $x \in \R$,
\begin{align*}
\mathcal{L} g_e(x) = e^{ax} \Big[ ca + \frac 1 2 \sigma^2 a^2+ \int_{(0,\infty)} (e^{-az} - 1 + a  z 1_{\{0 < z < 1\}} ) \Pi(\diff z) \Big] = e^{ax} \psi(a)
\end{align*}
 for any $a > 0$ and hence
we have
\begin{align}
(\mathcal{L}-r) g(x) + f(x) = - rK -b(\psi'(0+) -rx)  + \sum_{i=1}^N c_i e^{a_i x} (r - \psi(a_i))  + f(x). \label{american_verification_1}
\end{align}
By how $A^*$ is chosen,
\begin{align}
0 = -r K + b \Big(\frac r {\Phi_r}- {(\psi'(0+) -r A^*)}\Big) + \sum_{i=1}^N c_i e^{a_i A^*}  {\lapinv} \varpi_r(a_i)    + \lapinv \Psi_f(A^*). \label{american_verification_2}
\end{align}

Because $f$ is increasing and $x < A^*$
\begin{align}
\lapinv \Psi_f(A^*) \geq \lapinv \int_0^\infty e^{-\lapinv y} f(x) \diff y = f(x). \label{american_verification_3}
\end{align}
By $A^* \geq x$ and $\Phi_r > 0$,
\begin{align*}
b \Big(\frac r {\Phi_r}- {(\psi'(0+) -r A^*)}\Big) \geq  -b(\psi'(0+) -rx).
\end{align*}
It is also easy to see that
\begin{align}
e^{a_i A^*}{\lapinv} \varpi_r(a_i)  \geq e^{a_i x} (r - \psi(a_i)), \quad 1 \leq i \leq N.  \label{american_verification_4}
\end{align}
Indeed, for the case $r - \psi(a_i) > 0$, we must have $\lapinv-a_i > 0$ and hence \eqref{american_verification_4} holds by $A^* > x$; for the case $r - \psi(a_i) < 0$, the left-hand side is positive while the right-hand side is negative in \eqref{american_verification_4}; for the case $r - \psi(a_i) = 0$, the left-hand side is positive because $\psi'(\Phi_r)$ is, while the right-hand side is zero.
Hence, by \eqref{american_verification_2}-\eqref{american_verification_4}, $(\mathcal{L}-r) u_{A^*}(x) + f(x) \leq 0$ holds. 

This result also holds for the case $A^* = \infty$. In this case,  $0 > -r K + \sum_{i=1}^N c_i e^{a_i \widehat{A}} \lapinv \varpi_r (a_i) + b \big(\frac r {\Phi_r}- {(\psi'(0+) -r \widehat{A})}\big)  + \lapinv \Psi_f(\widehat{A})$,
for any $\widehat{A}\in \R$.  Therefore $(\mathcal{L}-r) g(x) + f(x) < 0$ holds by the same reasoning as in (1) by simply replacing $A^*$ with $\widehat{A}$ for any $\widehat{A} > x$.

\end{proof}

We are now ready to verify the optimality of $u_{A^*}(x)$ for the case $A^* \in (-\infty, \infty]$.   Thanks to Lemma \ref{lemma_smoothness} and the continuous/smooth fit condition as in  Remark \ref{remark_fit}, a version of Meyer-Ito's formula as in Theorem IV.71 of \cite{ProtterBook} (see also Theorem 2.1 of  \cite{sulem}) implies
\begin{align}\label{Decomp}
 e^{-r t}u_{A^*}(X_t)-u_{A^*}(X_0) 
 &= \int_0^t e^{-rs}(\mathcal{L}-r)u_{A^*}(X_{s-})\diff s+ M_t,\nonumber
 \end{align}
with the local martingale part
 \begin{align*}
 M_t &:= \int_0^t \sigma e^{-rs} u_{A^*}'(X_{s-}) \diff B_s -\int_{(0,t]} \int_{(0,1)} e^{-rs}u_{A^*}'(X_{s-})y (N(\diff s\times \diff y)-\Pi(\diff y) \diff s)\\
 &+\int_{(0,t]} \int_{(0, \infty)}e^{-rs}(u_{A^*}(X_{s-}-y)-u_{A^*}(X_{s-})+u_{A^*}'(X_{s-})y1_{\{0 < y < 1\}})(N(\diff s\times \diff y)-\Pi(\diff y) \diff s),
 \end{align*}
where $N(\diff s \times \diff x)$ is the Poisson random measure associated with the dual process $-X$.

Fix any stopping time $\tau$, and define for each $m\in \N$ the stopping time $T_m$ as
$$
T_m :=\inf\{t>0\; : \;|X_t|> m\},
$$
and the martingale process $M=\{M_{t\wedge \tau \wedge T_m}: t\geq 0\}$, with $M_0=0$. 
By optional sampling and because $(\mathcal{L} -r) u_{A^*} + f \leq 0$ via \eqref{results_egami_yamazaki} and Lemma \ref{lemma_superharmonic},
\begin{align*}
\E^x \Big[ e^{-r (t \wedge \tau \wedge T_m)} u_{A^*}(X_{t \wedge \tau \wedge T_m}) + \int_0^{t \wedge \tau \wedge T_m} e^{-rs} f(X_s) \diff s \Big] \leq u_{A^*}(x).
\end{align*}
When $x < A^*$,  $u_{A^*}(x) = g(x) \geq g(A^*) > -\infty$ (the same result holds for $A^* = \infty$ by Remark \ref{remark_A_tilde}).  On the other hand, if $A^* \in (-\infty, \infty)$ and $x  > A^*$, because $\partial u_A(x) / \partial A > 0$ on $(-\infty, A^*)$, the value  $u_{A^*}(x)$ is bounded from below by a limit:
\begin{align*}
u_{-\infty}(x) := \lim_{A \downarrow -\infty} u_A(x) =   \lim_{A \downarrow -\infty} \E^x \left[ \int_0^{\tau_A} e^{-rt} f(X_t) \diff
t  + e^{-q \tau_A} g(X_{\tau_A}) 1_{\{ \tau_A < \infty \}}\right].
\end{align*} 
Hence,
\begin{align*}
(u_{A^*})_- (x)  \leq (u_{-\infty})_- (x) \leq \limsup_{A \downarrow -\infty} \E^x \left[ \int_0^{\tau_A} e^{-rt} f_-(X_t) \diff
t  + e^{-q \tau_A} g_-(X_{\tau_A}) 1_{\{ \tau_A < \infty \}}\right] = \E^x \left[ \int_0^{\infty} e^{-rt} f_-(X_t) \diff
t \right],
\end{align*}
where the last equality holds by monotone convergence applied to the $f_-$ term and because $g_-(X_{\tau_A}) \leq -g(A) \vee 0$ on $\{ \tau_A < \infty \}$, which is bounded because $g$ is decreasing.   Because $f$ is increasing, the expectation on the right hand side decreases in $x$ and hence
\begin{align*}
(u_{A^*})_- (x)  \leq \E^{A^*} \left[ \int_0^{\infty} e^{-rt} f_-(X_t) \diff
t \right], \quad x > A^*.
\end{align*}
In sum, $(u_{A^*})_-$ is bounded from above.
%
%
%
%
%
%
%
Recall also Remark \ref{remark_finiteness_of_f}. Hence, Fatou's lemma gives upon $t \uparrow \infty$ and $m \uparrow \infty$
\begin{align*}
\E^x \Big[ e^{-r \tau} u_{A^*}(X_{\tau}) 1_{\{\tau < \infty\}} + \int_0^{\tau} e^{-rs} f(X_s) \diff s \Big] \leq u_{A^*}(x).
\end{align*}
Finally, \eqref{results_egami_yamazaki} shows the result for $A^* \in (-\infty, \infty]$.

(ii) It is now left to show for the case $A^* = -\infty$.  
Because $\partial u_A(x) / \partial A < 0$ for any $A \in \R$ as in \eqref{u_A_derivative}, there again exists $u_{-\infty}(x) := \lim_{A \downarrow -\infty} u_A(x)$.
Assumption \ref{assump_tail} guarantees that $\lim_{A \downarrow -\infty}\E^x [e^{-q \tau_A}|X_{\tau_A}| 1_{\{ \tau_A < \infty \}}] = 0$; see e.g., \cite{Yamazaki_2013}.  
Because the slope of $g_-(x)$ is bounded on the half-line, this shows that 
\begin{align*}\lim_{A \downarrow -\infty}\E^x [e^{-q \tau_A}g(X_{\tau_A}) 1_{\{ \tau_A < \infty \}}] = 0.
\end{align*}
This together with Remark \ref{remark_finiteness_of_f} shows that $u_{-\infty}(x)$ has the desired expression. 

Because $\partial u_A(x) / \partial A < 0$ for any $A \in \R$,  $u_{-\infty}(x) > g(x)$ for any $x \in \R$ (hence the stopping region is an empty set). Moreover, because $u_{-\infty}$ is attained by $\tau^*=\infty$, we have the claim.

\subsection{Proof of Proposition \ref{proposition_general}} \label{proof_proposition_general}
(1,2) We first suppose $x < A^*$.  Then by definition $u_{A^*}(x) = g(x)$.  Because $u_A(x) = g(x)$ for any $A \geq x$, it is sufficient to show $u_A(x) \leq g(x)$ for $A < x$.  This is indeed so because by  \eqref{cond_continuous_fit}, \eqref{u_A_derivative} and $\Lambda(x) < 0$ due to $x < A^*$,
\begin{align*}
u_A(x) \leq u_x(x+) = g(x) +  W^{(r)}(0) \Lambda(x) \leq g(x) = u_{A^*}(x), \quad A < x < A^*.
\end{align*}
This proves (2).
For (1), we additionally show for the case  $x \geq A^*$.  By \eqref{u_A_derivative},  $u_{A^*}(x) \geq u_A(x)$ for any $A \leq x$.  For $A \geq x$, $u_A(x) = g(x)$ and 
 by  \eqref{cond_continuous_fit}, \eqref{u_A_derivative} and $\Lambda(x) > 0$ due to $x > A^*$,
\begin{align*}
u_{A^*}(x) \geq u_x(x+) = g(x) +  W^{(r)}(0) \Lambda(x) \geq g(x) = u_A(x), \quad A \geq x > A^*. 
\end{align*}
Therefore $u_{A^*}(x) \geq u_A(x)$ uniformly in $x \in \R$, as desired.

The corresponding value function (for (1)) can be expressed as the sum of \eqref{Gammas}:
\begin{align*}
\widetilde{u}(x) = g(A^*)  Z^{(r)}(x-A^*) + W^{(r)} (x-A^*)  \left(- \frac r {\Phi_r} g(A^*) + \rho_{g,A^*}^{(r)} + \Psi_f(A^*) \right) - \varphi_{g,A^*}^{(r)}(x)  -
\Theta_f(x; A^*).
\end{align*}
From the definition of $A^*$ that makes \eqref{Large_lambda} vanish,  it is simplified to
\begin{align*}
\widetilde{u}(x) = g(A^*)  Z^{(r)}(x-A^*) + W^{(r)} (x-A^*)  \frac {\sigma^2} 2 g'(A^*) - \varphi_{g,A^*}^{(r)}(x)  -
\Theta_f(x; A^*),
\end{align*}
as desired.

 (3) The proof is the same as that of Proposition \ref{theorem_polynomial}(3).

\bibliographystyle{abbrv}
\bibliographystyle{apalike}

\bibliographystyle{agsm}
\small{\bibliography{ospbib}}

\end{document}